\documentclass{article}





\def\L{{\boldsymbol{\Lambda}}}




















\newif\ifpagenumbering
\pagenumberingtrue

\pagenumberingfalse

%
%
\newsavebox{\theorembox}
\newsavebox{\lemmabox}
\newsavebox{\defnbox}
\newsavebox{\corollarybox}
\newsavebox{\propositionbox}
\newsavebox{\remarkbox}
\newsavebox{\assbox}
\savebox{\theorembox}{\noindent\bf Theorem}
\savebox{\lemmabox}{\noindent\bf Lemma}
\savebox{\defnbox}{\noindent\bf Definition}
\savebox{\corollarybox}{\noindent\bf Corollary}
\savebox{\propositionbox}{\noindent\bf Proposition}
\savebox{\remarkbox}{\noindent\bf Remark}
\savebox{\assbox}{\noindent\bf Assumption}
\usepackage{Arxiv}




\usepackage[utf8]{inputenc} 
\usepackage[T1]{fontenc}    
\usepackage{hyperref}       
\usepackage{url}            
\usepackage{booktabs}       
\usepackage{amsfonts}       
\usepackage{nicefrac}       
\usepackage{microtype}      
\usepackage{xcolor}         
\usepackage{comment}
\usepackage{pifont}
\usepackage{graphicx}
\usepackage{multirow}
\usepackage{subfig}
\usepackage{float}
\usepackage{scalerel}
\usepackage{amsfonts}
\usepackage{amssymb}
\usepackage{enumitem} 
\usepackage{algorithm}
\usepackage{algpseudocode}
\usepackage{amsthm}
\usepackage{amsmath}
\usepackage{mathtools}
\usepackage{nicefrac}
\usepackage{dsfont}
\usepackage{url}
\setlength{\parskip}{0.5em}
\setlength{\parindent}{0em}
\usepackage{pifont}
\usepackage{hhline}
\usepackage{multirow}
\usepackage{graphicx,wrapfig}
\usepackage{xcolor} 
\usepackage{bbm}

\usepackage{subcaption}
\usepackage{array}
\usepackage{textcomp}

\captionsetup{aboveskip=4pt, belowskip=0pt}
\captionsetup[subfigure]{justification=centering, font=small, skip=3pt}

\newcommand{\za}[1]{\textcolor{black}{#1}}

\DeclareMathOperator*{\argmin}{\arg\!\min}
\DeclareMathOperator*{\argmax}{\arg\!\max}

\newtheorem{assumption}{Assumption}
\newtheorem{definition}{Definition}
\newtheorem{theorem}{Theorem}
\newtheorem{lemma}[theorem]{Lemma}
\newtheorem{proposition}[theorem]{Proposition}

\theoremstyle{remark}
\newtheorem{remark}{Remark}

\numberwithin{assumption}{section}
\numberwithin{definition}{section}
\numberwithin{theorem}{section}
\numberwithin{remark}{section}
\usepackage{todonotes}
\usepackage{hyperref}
\hypersetup{colorlinks,citecolor=blue,linktocpage,breaklinks=true}

\title{Semi-infinite Nonconvex Constrained Min-Max Optimization}

%

\author{%
  Cody Melcher\\
  School of Mathematical Sciences\\
  The University of Arizona\\
  Tucson, AZ 85721 \\
  \texttt{cmelcher@arizona.edu} \\
   \And
   Zeinab Alizadeh \\
   Systems and Industrial Engineering \\
   The University of Arizona \\
   Tucson, AZ 85721 \\
   \texttt{zalizadeh@arizona.edu} \\
    \And
   Lindsey Heitt \\
 Systems and Industrial Engineering \\
   The University of Arizona \\
   Tucson, AZ 85721 \\
   \texttt{lhiett9@arizona.edu} \\
      \And
   Afrooz Jalilzadeh \\
   Systems and Industrial Engineering \\
   The University of Arizona \\
   Tucson, AZ 85721 \\
   \texttt{afrooz@arizona.edu} \\
      \And
   Erfan Yazdandoost Hamedani \\
    Systems and Industrial Engineering \\
   The University of Arizona \\
   Tucson, AZ 85721 \\
   \texttt{erfany@arizona.edu} \\
}

\date{}

\begin{document}

\maketitle

\begin{abstract}
Semi-Infinite Programming (SIP) has emerged as a powerful framework for modeling problems with infinite constraints, however, its theoretical development in the context of nonconvex and large-scale optimization remains limited. In this paper, we investigate a class of nonconvex min-max optimization problems with nonconvex infinite constraints, motivated by applications such as adversarial robustness and safety-constrained learning. We propose a novel inexact dynamic barrier primal-dual algorithm and establish its convergence properties. Specifically, under the assumption that the squared infeasibility residual function satisfies the {\L}ojasiewicz inequality with exponent $\theta \in (0,1)$, we prove that the proposed method achieves $\mathcal{O}(\epsilon^{-3})$, $\mathcal{O}(\epsilon^{-6\theta})$, and $\mathcal{O}(\epsilon^{-3\theta/(1-\theta)})$ iteration complexities to achieve an $\epsilon$-approximate stationarity, infeasibility, and complementarity slackness, respectively. Numerical experiments on robust multitask learning with task priority further illustrate the practical effectiveness of the algorithm. 
\end{abstract}

\section{Introduction}
Recent advances in artificial intelligence (AI), particularly foundation models for language, vision, and multimodal reasoning, have revealed impressive capabilities and critical vulnerabilities at the same time. These models are often susceptible to adversarial perturbations, leading to concerns about their reliability and safety in high-stakes applications \za{\cite{goodfellow2014explaining}}. Similarly, ensuring robustness in domains such as supply chain management and autonomous control systems requires optimization frameworks that can account for worst-case scenarios across a continuum of uncertainties. Semi-Infinite Programming (SIP), which naturally models problems with infinite constraints, provides a powerful tool for addressing these challenges. However, despite its rich and extensive theoretical and algorithmic development, comparatively less attention has been devoted to designing efficient first-order methods for emerging applications in modern AI and Operations Research. 

Motivated by this gap, in this paper, we consider the following min-max optimization with infinite constraints:
\begin{align}\label{main}
&\min_{x\in \mathbb R^n} \max_{y\in Y}{\phi(x,y)},\quad
\mbox{s.t.}\quad \psi(x,w)\leq 0,\quad\forall w\in W,
\end{align}
where $\phi:\mathbb R^n\times \mathbb R^m\to \mathbb R$ and $\psi:\mathbb R^n\times \mathbb R^\ell\to \mathbb R$ are continuously differentiable functions. Moreover,  $Y\subseteq \mathbb{R}^m, W\subseteq\mathbb{R}^\ell$ are convex, non-empty, closed sets. 
Problem~\eqref{main} arises in a wide range of applications, including robust optimization~\cite{ben2002robust, bertsimas2011theory}, distributionally robust learning~\cite{rahimian2022frameworks}, and adversarial machine learning~\cite{machado2021adversarial, sanjabi2018convergence}. In these settings, we aim to optimize against worst-case scenarios while ensuring that an infinite family of constraints is satisfied. Such constraints often encode safety, fairness, or robustness requirements that must hold uniformly over a continuous set of parameters.


To develop efficient algorithms for solving problem~\eqref{main}, it is critical to recognize the distinctive structural complexities it introduces and why existing approaches are insufficient. The problem combines features of min-max optimization and SIP, resulting in a constrained min-max optimization problem with an infinite set of nonlinear constraints. While min-max problems and SIPs have been studied independently, their intersection as in \eqref{main} poses unique challenges. Standard first-order algorithms for solving min-max problems typically assume finite-dimensional constraints without nonlinear constraints, whereas SIP approaches often assume convexity or lack the nested max structure in the objective. Moreover, the presence of inner maximizations in both the objective and constraints renders traditional gradient-based methods or constraint sampling techniques inadequate, particularly when $\phi$ and/or $\psi$ are nonconvex in $x$. This calls for new algorithmic strategies that can simultaneously handle the nonconvexity, the infinite constraint set, and the nested structure of the problem.

In the following, we review relevant literature in each of these areas to highlight existing methods and identify the challenges that arise in tackling problem~\eqref{main}.

\subsection{Literature Review}

\paragraph{Nonconvex constrained optimization.}
Consider the following constrained optimization problem \begin{equation}\label{eq:constraint-opt}
\min_{x \in X}~f(x)\quad \text{s.t.}\quad g(x)\leq \mathbf{0},
\end{equation}
where $f:\mathbb R^n\to\mathbb R$ and $g:\mathbb R^n\to\mathbb R^m$ are continuously differentiable, but not necessarily convex, and $X\subseteq \mathbb R^n$ is a closed convex set. 
When $f(\cdot)$ is nonconvex and the constraints $g_i$'s are either linear or convex inequalities, a range of algorithms have been proposed, including penalty methods, Lagrangian methods, and augmented Lagrangian methods~\cite{hong2016convergence,kong2019complexity,hestenes1969multiplier}.
The study of optimization algorithms for non-convex constrained problems has a long history, including analyses of the global asymptotic convergence of methods such as Augmented Lagrangian method \cite{bertsekas2014constrained}, Augmented Lagrangian trust-region \cite{cartis2011evaluation}, and Sequential Quadratic Programming \cite{curtis2012sequential}, among others. However, due to the nonconvexity of the constraints, these methods may converge to infeasible stationary points. To ensure convergence to feasible stationary points, additional assumptions are typically required. For instance, assuming access to a (nearly) feasible solution, several methods have been developed to find an $\epsilon$-KKT point within $\mathcal{O}(\epsilon^{-4})$ iterations \cite{ma2020quadratically,lin2022complexity,sun2024dual}. With further regularity conditions, the complexity can be improved to $\mathcal{O}(\epsilon^{-3})$ for these methods and others, such as \cite{li1485rate,lu2022single}.

More recently, the dynamic barrier gradient descent (DBGD) method~\cite{gong2021bi} has emerged as a principled alternative, incorporating barrier functions that smoothly penalize the constraint violation. The proposed algorithm has been studied in continuous time and shown to achieve an $\mathcal{O}(1/t)$ convergence rate in terms of KKT residuals, assuming the dual iterates remain bounded. However, this assumption may not hold in practice, and when $\lambda_t$ is unbounded, the convergence slows down. In such cases, the KKT violation decays at a rate of $\mathcal{O}(\max(1/t^{2/\tau}, 1/t^{1-1/\tau}))$ where $\tau > 1$ is a user-defined parameter controlling the dynamic barrier. 



\paragraph{Min-max optimization.} Min-max optimization, rooted in von Neumann’s foundational work \cite{v1928theorie}, has become increasingly central in modern applications such as adversarial learning, fairness, and distributionally robust optimization \cite{sinha2017certifying,deng2020distributionally,oneto2020fairness}. Classical convex-concave problems have been well-studied using primal-dual and gradient-based algorithms \cite{chen1997convergence,chambolle2016ergodic,hamedani2021primal}.
In recent years, nonconvex–strongly concave (NC-SC) problems have received significant attention, with algorithms such as gradient descent ascent (GDA) and alternating GDA achieving rates of $\mathcal{O}(\kappa^2 \varepsilon^{-2})$ \cite{nouiehed2019solving, yang2020global,lin2020gradient}, where $\kappa$ denotes the condition number. Proximal point methods combined with acceleration further improve the rate to $\tilde{\mathcal{O}}(\sqrt{\kappa} \varepsilon^{-2})$--a rate shown to be optimal under standard complexity assumptions \cite{zhang2021complexity,li2021complexity,lin2020near}. 
For the more general nonconvex–concave (NC-C) setting, the convergence rates are typically reduced to $\mathcal O(\epsilon^{-4})$ due to the absence of strong concavity, e.g., see \cite{lu2020hybrid,xu2023unified,boroun2023accelerated} and the references therein. 
Despite recent studies, existing methods assume convex and easy-to-project constraints. To the best of our knowledge, there is no work on nonconvex-(strongly) concave min-max problems with nonconvex nonlinear constraints. 
\vspace{-0.5em}
\paragraph{Robust Optimization.}
Robust optimization (RO) provides a framework for decision making under uncertainty by introducing an uncertainty sets $Y,W$ and requiring any candidate solution $x$ remain feasible for all realizations in the uncertainty set, leading to problems of the form $\min_{x \in X}\sup_{y\in Y} f(x,y)$ s.t.\ $\sup_{w \in \mathcal W} g(x,w) \leq 0$. When suitable regularity conditions hold, the inner supremum admits a convex dual reformulation~\cite{ben2002robust,bertsimas2011theory}. In more general cases, RO is addressed through cutting-set methods~\cite{mutapcic2009cutting,ben2015oracle} or scenario-based approaches that approximate $Y,W$ with a finite sample~\cite{campi2008exact,campi2011sampling}.
Building on RO, distributionally robust optimization (DRO) defines an ambiguity set $\mathcal P$ of probability distributions and yields the formulation $\min_x \sup_{P\in\mathcal P}\mathbb E_P[\ell(x,\xi)]$~\cite{rahimian2022frameworks,kuhn2025distributionally}. In structured convex scenarios, duality can transform the min–max problem to a single-level program \cite{delage2010distributionally,mohajerin2018data,namkoong2016stochastic}.
{However, these approaches do not extend to the general setting considered in this paper.}
\paragraph{Semi-Infinite Programming} 
Semi-infinite programming (SIP) was introduced in the 1960s through the foundational work of Charnes, Cooper, and Kortanek~\cite{charnes1963duality}, and has since evolved into a versatile framework with applications in functional approximation~\cite{stein2003bi}, finance~\cite{christensen2014method}, and multi-objective learning~\cite{sun2021some}. 
For convex SIP, three main approaches have been developed: discretization ~\cite{blankenship1976infinitely,seidel2022adaptive}, cutting surface methods~\cite{hettich1993semi,mehrotra2014cutting}, and penalty methods~\cite{lin2014new,yang2016optimality}. 
These methods often entail solving non-trivial subproblems, which makes them computationally expensive in large-scale settings.
On the theoretical side, duality and sufficient optimality conditions have been established~\cite{shapiro2009semi,kostyukova2010sufficient}. In contrast, the theory and algorithms for nonconvex SIP remain less developed, reflecting the greater difficulty of the nonconvex setting~\cite{djelassi2021recent}. Recent work has explored new discretization strategies~\cite{turan2025bounding} and min–max reformulations of SIP constraints that motivate primal–dual algorithms~\cite{nouiehed2019solving,jin2020local}.
However, none of these approaches address our setting fully, with the closest being recent work by Yao et al.~\cite{yao2023deterministic}, which develops first-order primal–dual schemes with non-asymptotic guarantees under convex regime.

\subsection{Applications}\label{sec:app}

\paragraph{Robust Multi-Task Learning with Task Priority.}
Multi-task learning (MTL) is a paradigm in machine learning that aims to simultaneously learn multiple related tasks by leveraging shared information among them \cite{caruana1997multitask}. The key idea is to enhance generalization performance by enabling tasks to learn collaboratively rather than in isolation. Specifically, let $\{\mathcal{T}_i\}_{i=1}^T$ represent a collection of $T$ tasks, each associated with its own training dataset $\{\mathcal{D}_i^{tr}\}_{i=1}^T$, where the feature space is shared across tasks. Each task $\mathcal{T}_i$ is characterized by a loss function $\ell_i(x, \mathcal{D}_i^{tr})$, where $x$ denotes the shared parameters learned across all tasks. 
In applications where one task is prioritized, the problem can be formulated by minimizing its loss while enforcing the losses of the remaining tasks to stay below specified thresholds $r_i$. 
Most existing MTL formulations assume a uniform distribution over training samples when computing task-specific losses; however, in real-world applications, the underlying data distributions are often uncertain or unknown. To address this, one standard approach is to utilize the distributionally robust optimization (DRO) \cite{rahimian2022frameworks},  and define the loss function for task $i$ as the weighted sum over the training dataset $\sum_{\xi_{j}\in \mathcal D_i^{tr}} y_j^{(i)}\ell_i(x,\xi_{j})$ where the weights $\{y_j^{(i)}\}_{j=1}^{m_i}$ lies in an uncertainty set $Y_i$, e.g., $Y_i =\{ y \in \Delta_{m_i}: V(y,\tfrac{1}{m_i}\mathbf 1_{m_i}) \leq \rho \} $ where $V(Q,P)$ denotes the divergence measure between two sets of probability measures $Q$ and $P$ and $\Delta_m \triangleq \{ y\in \mathbb{R}_+^m | \sum_{i=1}^n y_i =1  \} $ represents the simplex set \cite{namkoong2016stochastic}. This leads to a DRO-based MTL formulation with task prioritization:
\begin{align*}
&\min_{x\in\mathbb R^m}\max_{y^{(1)}\in Y_1}\sum_{\xi_{j}\in \mathcal D_1^{tr}} y_j^{(1)}\ell_1(x,\xi_{j})\\
&\text{s.t.}\quad \sum_{\xi_{j}\in \mathcal D_i^{tr}} y_j^{(i)}\ell_i(x,\xi_{j})\leq r_i,\quad \forall y^{(i)}\in Y_i, ~\forall i\in\{2,\hdots, T\}.
\end{align*}

\paragraph{Robust, Energy-Constrained Deep Neural Networks Training (DNN).}
This problem involves optimizing model performance while limiting the total energy consumed during training~\cite{yang2019ecc}. This constraint arises in resource-constrained environments such as edge devices or large-scale systems where power efficiency is critical. 
In particular, the goal is to train a deep neural network that performs reliably under worst-case distribution uncertainty, while satisfying energy and sparsity constraints under hardware-induced uncertainty. 
Consider a neural network with $L$ layers and let $W = [w^{(1)},\hdots, w^{(L)}]$ denote the collection of weights of all the layers, and $S = [s^{(1)},\hdots,s^{(L)}]$ represent the collection of the non-sparse weights of all the layers. The goal is to train the model by minimizing worst-case loss under distribution uncertainty $y \in Y$, while ensuring compliance with resource constraints described by $\varphi_2(S)\leq E_{\text{budget}}$. This problem can be formulated as follows:
\begin{align*}
    \min_{(W, S)} \;\max_{y \in Y} \;\sum_{\xi_i\in \mathcal{D}_{tr}} y_i\ell(W; \xi_i)
    \quad \text{s.t.} \quad 
    \varphi_1(w^{(u)}) \leq s^{(u)} ,  \quad 
   \varphi_2(S) \leq E_{\text{budget}}, \quad \forall u\in\{1,\hdots, L\}.
\end{align*}
where $\varphi_1(w^{(u)})$ calculates the sparsity in layer with the sparsity level $s^{(u)}$, $\varphi_2(S)$ represents the energy consumption of the DNN, which is usually a non-convex function, and the constant $E_{\text{budget}}$ is the threshold of the maximum energy budget.


\paragraph{Contributions.}
In this paper, we study a class of semi-infinite constrained min-max optimization problems. Unlike existing methods, our framework accommodates both nonconvex objectives and constraints defined over infinite cardinality constraint sets, thereby significantly broadening the scope of both min-max optimization and semi-infinite programming (SIP). We propose a novel \emph{Inexact Dynamic Barrier Primal-Dual} (iDB-PD) method, which performs gradient-based updates on the primal variables to simultaneously reduce the objective function and the infeasibility residual, formulated through a quadratic programming subproblem. To regulate the behavior of the search direction near the feasible region, we introduce an indicator function that ignores the constraint when it is satisfied and adjusts the direction to focus solely on minimizing the objective. Assuming that the squared inexact infeasibility residual $[\psi(\cdot,w)]_+^2$ satisfies the {\L}ojasiewicz inequality with exponent $\theta \in (0,1)$ for any $w \in W$, we establish the first global non-asymptotic convergence guarantees for the class of problems where the objective and constraint functions are smooth and are either strongly concave or satisfying Polyak-{\L}ojasiewicz (PL) in their second component. In particular, we show that our method attains an $\epsilon$-KKT solution within $\mathcal{O}(\epsilon^{-3})$, $\mathcal{O}(\epsilon^{-6\theta})$, and $\mathcal{O}(\epsilon^{-3\theta/(1-\theta)})$ in terms of first-order stationarity, feasibility, and complementarity slackness, respectively. Finally, we demonstrate the effectiveness of our algorithm on real-world data by applying it to robust multi-task learning (MTL) with task priority across various datasets.

\section{Preliminaries}\label{sec:assumptions}

This section introduces the notations, definitions, and assumptions used throughout the analysis. 

\paragraph{Notation.} 
Throughout the paper, $\|\cdot\|$ denotes the Euclidean norm. 
We use $\mathbb{R}_+^n$ to denote the nonnegative orthant. For $x \in \mathbb{R}^n$, we adopt $[x]_+ \in \mathbb{R}_+^n$ to denote $\max\{x, 0\}$, where the maximum is taken componentwise. For any convex set $C\subseteq \mathbb R^n$ and point $x\in C$, the normal cone is denoted by $\mathcal N_C(x)\triangleq \{p\in\mathbb R^n \mid p^T x \geq p^T y,~ \text{ for all } y \in C\}$. For a differentiable vector-valued function $f:\mathbb R^n \to \mathbb R^m$, the Jacobian is denoted by $\mathbf{J}f:\mathbb R^n\to \mathbb{R}^{m\times n}$ defined as the matrix of gradients.  

Next, we define the {\L}ojasiewicz property. Intuitively, this inequality controls the behavior of the gradient near critical points, preventing it from vanishing too quickly unless the function value is close to its critical value. 
\begin{definition}[{\L}ojasiewicz inequality]\label{def:lojineq}
Let \( f: \mathbb{R}^n \to \mathbb{R} \) be a differentiable function on an open subset of $\mathbb R^n$, and let \( x^* \in \mathbb{R}^n \) be a critical point of \( f \), i.e., \( \nabla f(x^*) = 0 \). We say that \( f \) satisfies the \emph{{\L}ojasiewicz inequality} at \( x^* \) if there exist constants \( \theta \in [0, 1) \), \( c > 0 \), and a neighborhood \( U \) of \( x^* \) such that for all \( x \in U \),
\begin{equation}\label{eq:lojasiewicz}
c \left| f(x) - f(x^*) \right|^{\theta}\leq \| \nabla f(x) \|.
\end{equation}
The constant \( \theta \in [0,1) \) is called the \emph{Łojasiewicz exponent}. 
\end{definition}

{\L}ojasiewicz property and its extension for nonsmooth functions \cite{bolte2007lojasiewicz} (also known as the Kurdyka-{\L}ojasiewicz (KL) property) plays a central role in non-convex optimization and has been extensively studied in the literature \cite{attouch2010proximal,attouch2013convergence,xu2013block}, including its special case for $\theta=1/2$, also known as the Polyak-{\L}ojasiewicz (PL) inequality \cite{karimi2016linear}. The fundamental contributions to this concept are attributed to Kurdyka \cite{kurdyka1998gradients} and {\L}ojasiewicz \cite{lojasiewicz1963propriete}. A broad class of functions has been shown to satisfy {\L}ojasiewicz property, including real analytic functions \cite{lojasiewicz1982trajectoires}, functions definable in $o$-minimal structures, 
and differentiable subanalytic functions \cite{kurdyka1998gradients}.
In the context of DNN training, networks constructed by common components such as linear, polynomial, tangent hyperbolic, softplus, and sigmoid activation functions; squared, logistic, Huber, cross-entropy, and exponential loss functions satisfy the {\L}ojasiewicz property \cite{xu2013block,zeng2019global}. 

Next, we present the assumptions regarding the objective and constraint functions.

\begin{assumption}[Objective function]\label{assum:objective}
(i) Function $\phi(\cdot,\cdot)$ is continuously differentiable and there exist constants $L_{xx}^\phi,L_{yy}^\phi\geq 0$ and $L_{xy}^\phi>0$ such that for any $x,\bar{x}\in \mathbb R^n$ and $y,\bar y\in Y$,
\begin{align*}
&\|\nabla_x\phi(x,y)-\nabla_x\phi(\bar x,\bar y)\|\leq L_{xx}^\phi \|x-\bar x\| + L_{xy}^\phi \|y-\bar y\|,\\
&\|\nabla_y\phi(x,y)-\nabla_y\phi(\bar x,\bar y)\|\leq L_{xy}^\phi \|x-\bar x\| + L_{yy}^\phi \|y-\bar y\|.
\end{align*}
(ii) $\nabla_x\phi$ is bounded, i.e., there exists $C_\phi>0$ such that $\|\nabla_x\phi(x,y)\|\leq C_\phi$ for all $x\in\mathbb R^n$ and $y\in Y$. (iii) For any fixed $x\in \mathbb R^n$, $\phi(x,\cdot)$ is either $\eta_\phi$-strongly concave function or $c_\phi$-PL with $Y=\mathbb R^m$.
\end{assumption}
\begin{assumption}[Constraint function]\label{assum:constraint}
(i) Function $\psi(\cdot, \cdot)$ is continuously differentiable and there exist constants $L_{xx}^\psi, L_{ww}^\psi \geq 0$ and $L_{xw}^\psi > 0$ such that for any $x, \bar{x} \in \mathbb{R}^m$ and $w, \bar{w} \in W$,
\begin{align*}
&\|\nabla_x \psi(x, w) - \nabla_x \psi(\bar{x}, \bar{w})\| 
\leq L_{xx}^\psi \|x - \bar{x}\| + L_{xw}^\psi \|w - \bar{w}\|,\\
& \|\nabla_w \psi(x, w) - \nabla_w \psi(\bar{x}, \bar{w})\| 
\leq L_{xw}^\psi \|x - \bar{x}\| + L_{ww}^\psi \|w - \bar{w}\|.
\end{align*}
(ii) $\nabla_x\psi$ is bounded, i.e., there exists a constant $C_\psi > 0$ such that $\|\nabla_x \psi(x, w)\| \leq C_\psi$ for all $x \in \mathbb{R}^n$ and $w \in W$. (iii) For any fixed $x \in \mathbb{R}^n$, $\psi(x, \cdot)$ is either $\eta_\psi$-strongly concave function \za{on a closed convex set $W \subseteq \mathbb{R}^\ell$} or $c_\psi$-PL \za{over the entire space (i.e., $W=\mathbb R^\ell$).} 
\end{assumption}
\subsection{Regularity Assumption and Connection to Existing Literature} 
Finding a global/local solution of Nonlinear Programming (NLP) in \eqref{eq:constraint-opt} with nonconvex constraints is generally intractable. As such, most existing methods aim for finding a stationary solution known as the Krush-Kuhn-Tucker (KKT) point, i.e., finding $x\in X$ such that
\begin{equation}
0\in \nabla f(x)+ \mathbf{J}g(x)^\top \lambda+\mathcal N_X(x),\quad g(x)\leq \mathbf{0},\quad \lambda_i g_i(x)=0,\quad \forall i
\end{equation}
for some $\lambda\in\mathbb R^m_+$. 
Even finding such a stationary solution is a daunting task, as one of the primary challenges lies in identifying a feasible solution when the constraint is nonconvex. In this setting, researchers have explored different assumptions and problem structures to ensure convergence of optimization algorithms to a feasible stationary point. For instance, assuming that the algorithm can start from a (nearly) feasible solution, several studies (e.g., \cite{ma2020quadratically,lin2022complexity,sun2024dual}) have established convergence guarantees for obtaining an approximate KKT point. Another widely adopted assumption in the literature is a \emph{regularity condition}, which posits the existence of a constant $\mu > 0$ such that $\|[g(x)]_+\|\leq \frac{\mu}{2}\text{dist}(\mathbf{J}g(x)^\top [g(x)]_+,-\mathcal N_X(x))$.
In the special case where $X = \mathbb{R}^n$, this simplifies to 
\begin{equation}\label{eq:reg-cond-const}
\|[g(x)]_+\|\leq \frac{\mu}{2}\|\mathbf{J}g(x)^\top [g(x)]_+\|.
\end{equation}
Denoting the squared infeasibility residual function by $p(x)\triangleq \|[g(x)]_+\|^2$, this condition can be  written equivalently as $p(x) \leq \mu^2\|\nabla p(x)\|^2$. Assuming that a feasible solution exists, we have $\min_x p(x) = 0$, making it clear that the above regularity condition is equivalent to $p(\cdot)$ satisfying the PL condition, that is, the Łojasiewicz inequality with exponent $\theta = 1/2$.

In this work, we consider a more general regularity condition based on the Łojasiewicz inequality \eqref{eq:lojasiewicz} with an exponent $\theta \in (0, 1)$ for the residual function $[\psi(\cdot,w)]_+^2$ for any index parameter $w \in W$. Under this condition, we establish a uniform convergence guarantee for the proposed algorithm, which depends explicitly on the parameter $\theta$. We now formally introduce our regularity assumption. 

\begin{assumption}\label{assum:reg}
Consider the constraint function $\psi:\mathbb R^n\times \mathbb R^\ell\to \mathbb R$ in problem \eqref{main}. We assume that $[\psi(\cdot,w)]_+^2$ satisfies {\L}ojasiewicz inequality for any given $w\in W$ , i.e., there exist $\mu>0$ and $\theta\in (0,1)$ such that for any fixed $w\in W$, the following holds
\begin{equation}\label{eq:reg}
    [\psi(x,w)]_+^{2\theta}\leq \mu\|\nabla_x\psi(x,w)[\psi(x,w)]_+\|, \quad \forall x\in\mathbb R^n.
\end{equation}
\end{assumption}
\begin{remark}
We would like to highlight that this assumption generalizes existing regularity conditions in the nonconvex constrained optimization literature for any parameter $\theta \in (0,1)$. 
Notably, when the index set $W$ is a singleton and $\theta = 1/2$, this condition reduces to the classical regularity condition \eqref{eq:reg-cond-const} for a single constraint. Furthermore, Assumption~\ref{assum:reg} is satisfied in a variety of practical applications, including the DRO-MTL described in Section~\ref{sec:app}. In particular, using results from the calculus of real analytical and semialgebraic functions \cite{shiota1997geometry,krantz2002primer,bochnak2013real}, one can verify that Assumption~\ref{assum:reg} holds for $\psi(x, w) = \sum_j w_j \ell(x; \xi_j)$, where $\ell(\cdot; \xi)$ is a smooth loss function, such as those used in DNNs with smooth activation functions. \za{However, Assumption~\ref{assum:reg} may not hold for neural networks with nonsmooth components such as ReLU, and one needs to use the
smoothed variant, e.g., softplus, to ensure it holds.}
\end{remark}

\section{Proposed Approach}
In this paper, we introduce a novel dynamic barrier method tailored for min-max problems with semi-infinite, nonconvex constraints. Our proposed iterative scheme generates approximated gradient-based directions to update the triplet of variables \((x, y, w)\). Specifically, to minimize the objective function \(\phi(x, y)\) with respect to \(x\), we aim to follow a direction close to \(\nabla_x \phi(x_k, y_k)\). Moreover, to encourage feasibility, this direction should either improve $\psi(x,w)$ in $x$ or avoid moving far away from the feasible region, i.e., $\max_{w\in W}\psi(x_{k+1}, w) \leq \max_{w\in W}\psi(x_k, w) + \varepsilon$ for a small and controllable error $\varepsilon > 0$. Indeed, we show that given a ``good" estimate $w_k\approx w^*(x_k)\in \argmax_{w\in W}\psi(x_k,w)$, we can satisfy this condition by imposing an affine constraint  $\nabla_x \psi(x_k, w_k)^\top d_k + \alpha_k  \rho(x_k, w_k) \leq 0$, 
where \(\rho(\cdot, \cdot)\) is an \emph{inexact min-max dynamic barrier function}. Intuitively, function $\rho(\cdot,\cdot)$ encourages the search direction \(d_k\) to align with \(-\nabla_x \psi(x_k, w_k)\). Accordingly, we define  $\rho(x_k, w_k) := \| \nabla_x \psi(x_k, w_k) \|$. The direction \(d_k\) is then obtained by solving the following quadratic program (QP):
\begin{align}\label{QP-subproblem}
     d_k=&\ \mbox{argmin}_ d~\| d+\nabla_x\phi(x_k,y_k)\|^2\\
    &\quad \text{s.t.}\qquad  \nabla_x \psi(x_k,w_k)^\top d+\alpha_k \rho(x_k,w_k)\leq 0.\nonumber 
\end{align}
This QP admits a closed-form solution and can be computed efficiently at each iteration. More specifically, $d_k=-\nabla_x\phi(x_k,y_k)-\lambda_k\nabla_x\psi(x_k,w_k)$ where $\lambda_k$ is the dual multiplier corresponding to the constraint in the above QP updated as follows:
\begin{equation}\label{update:lambda}
    \lambda_k=\frac{1}{\|\nabla_x\psi(x_k,w_k)\|^2}[-\nabla_x\psi(x_k,w_k)^\top \nabla_x\phi(x_k,y_k)+\alpha_k \rho(x_k,w_k)]_+.
\end{equation}
The main issue with the update of dual multiplier $\lambda_k$ is that its value goes to infinity as $\|\nabla_x\psi(x_k,w_k)\|$ vanishes. To resolve this issue, our idea is to introduce an indicator function $\zeta(x,w)\triangleq [\psi(x,w)]_+\|\nabla_x\psi(x,w)\|$. Note that $\zeta(x,w^*(x))=0$ indicates that the point $x$ is feasible {\color{red} or} a critical point of the constraint function. However, under Assumption \ref{assum:reg} and using the definition of $\zeta(x,w)=\|\nabla_x\psi(x,w)[\psi(x,w)]_+ \|$, we can conclude that $\zeta(x,w^*)=0$ implies that $x$ is feasible. In this case, due to feasibility, we only wish to reduce the objective function and move along the direction $d_k=-\nabla_x\phi(x_k,y_k)$, hence, we would like to enforce $\lambda_k=0$. However, computing the exact value of $\zeta(x_k,w^*(x_k))$ may not be possible. To resolve this issue, we use an estimated value $\zeta(x_k,w_k)$ as a measure of criticality and feasibility of the constraint which indicates \za{whether} $\lambda_k$ is updated based on \eqref{update:lambda} or set to zero. In other words, when $\zeta(x_k,w_k)=0$ the constraints are treated as inactive. 
The maximization variables $y$ and $w$ are subsequently updated using $N_k$ and $M_k$ steps of (accelerated) gradient ascent, respectively. A full description of the algorithmic steps is presented in Algorithm~\ref{alg:sp_const}. 

\begin{remark}
We would like to point out that the proposed approach is related to the dynamic barrier gradient method introduced in \cite{gong2021bi}. While there are some conceptual similarities, we emphasize that the two methods differ significantly in both algorithmic design and convergence analysis. In particular, apart from addressing inexactness and incorporating a maximization component, two key distinctions of our approach are: (1) Introduction of the indicator function $\zeta(\cdot,\cdot)$, which serves as a metric to regulate the behavior of the dual multiplier $\lambda_k$. This leads to a modified update rule that enables convergence to a KKT point without requiring the boundedness of the dual iterates, unlike the assumption in \cite{gong2021bi}. (2) Our choice of barrier function differs: specifically, $\rho(x,w) = \|\nabla_x \psi(x,w)\|$ corresponds to $\tau = 1$ in their framework. However, this parameter choice does not yield a convergence rate guarantee in their method -- see Proposition 3.7 in \cite{gong2021bi}.  
\end{remark}

\begin{algorithm}[H]
\caption{Inexact Dynamic Barrier Primal-Dual (iDB-PD) Method for Semi-Infinite Min-Max}\label{alg:sp_const}
\begin{algorithmic}[1]
\State \textbf{Input:} $x_0\in\mathbb R^n$, $\gamma_k\in (0,1)$, $\alpha_k>0$, $N_k,M_k\geq 1$ 
\For{$k = 0,\dots,T-1$}
    \State $\lambda_k\gets \begin{cases} \frac{1}{\|\nabla_x\psi(x_k,w_k)\|^2}[-\nabla_x\psi(x_k,w_k)^\top \nabla_x\phi(x_k,y_k)+\alpha_k \rho(x_k,w_k)]_+ & \text{if}~\zeta(x_k,w_k)>0   \\ 0 &\text{o.w.} \end{cases}$
    \State $ d_k\gets -\nabla_x\phi(x_k,y_k)-\lambda_k \nabla_x\psi(x_k,w_k)$
    \State $x_{k+1}\gets x_k+\gamma_k d_k$
    \State $y_{k+1}\approx \argmax_{y\in Y}\phi(x_{k+1},y)$\quad using $N_k$ steps of (accelerated) gradient ascent
    \State $w_{k+1}\approx \argmax_{w\in W}\psi(x_{k+1},w)$\quad using $M_k$ steps of (accelerated) gradient ascent
\EndFor
\end{algorithmic}
\end{algorithm}

\section{Convergence Analysis}\label{sec:conv}
In this section, we analyze the convergence of the iDB-PD algorithm. Indeed, problem \eqref{main} can be viewed as an implicit nonconvex constrained optimization problem $\min f(x)~\text{s.t.}~g(x)\leq 0$, where $f(x)\triangleq \max_{y\in Y}\phi(x,y)$ and $g(x)\triangleq \max_{w\in W}\psi(x,w)$. Based on Assumption \ref{assum:objective}-(iii) and \ref{assum:constraint}-(iii), we can show that $f$ and $g$ are continuously differentiable functions (see Section \ref{sec:max} for proof); hence, we can establish the gap function based on the KKT solution of the implicit problem. Specifically, our goal is to find an $\epsilon$-KKT solution, i.e., find $\bar x\in \mathbb R^n$ and $\lambda\geq 0$ such that
\begin{equation*}
\|\nabla f(\bar x)+\lambda \nabla g(\bar x)\|\leq \epsilon,\quad [g(\bar x)]_+\leq \epsilon,\quad |\lambda g(\bar x)|\leq \epsilon.
\end{equation*}

Our first step of analysis is to provide a descent-type inequality for the objective and constraint functions. This requires first analyzing the modified dual multiplier $\lambda_k$. The reason why we call it a modified dual multiplier is that $\lambda_k$-update is modified based on the value of the indicator function $\zeta(x_k,w_k)$. In effect, this modification can be translated into how we construct the QP subproblem. In particular, when $\zeta(x_k,w_k)>0$, $d_k$ is updated based on the QP in \eqref{QP-subproblem}, otherwise $d_k$ is updated based on the unconstrained variant of \eqref{QP-subproblem} , i.e., $d_k=\argmin_{d}\|d+\nabla_x\phi(x_k,y_k)\|^2$. Nevertheless, we can upper bound $\|d_k\|$ and $\lambda_k$. In particular, using Cauchy-Schwartz and triangle inequalities, one can verify that  $\lambda_k\|\nabla_x\psi (x_k,w_k)\|\leq \|\nabla_x\phi(x_k,y_k)\|+\alpha_k$ for any $k\geq 0$. This relation, along with Lipschitz continuity of $\nabla f$ allows us to prove the following result regarding the objective function,
\begin{equation}\label{eq:bound f-draft}
   f(x_{k+1})\leq f(x_k) + \gamma_k \alpha_k  (C_\phi+\alpha_k) +\tfrac{L_{xy}^{\phi}}{2}\|y_k-y^*(x_k)\|^2 +\left(\tfrac{\gamma_k ^2 (L_f+L_{xy}^{\phi})}{2}-\gamma_k \right)\| d_k\|^2,
\end{equation}
where $y^*(x)=\mathcal P_{Y^*(x)}(y_k)$ and $Y^*(x)=\argmax_{y\in Y}\phi(x,y)$.

To obtain a descent-type inequality for the constraint, we define the infeasibility residual function $p(x)\triangleq [g(x)]_+^2$. Note that this function is continuously differentiable whose gradient, i.e., $\nabla p(x)=2\nabla g(x)[g(x)]_+$, is locally Lipschitz continuous with constant $L_p(x)\triangleq 2C_\psi^2+L_g^2+p(x)$ -- see Lemma \ref{lem:gplus} for details and proof. Although $d_k$ is not directly a feasible solution of the QP subproblem in \eqref{QP-subproblem} (it is only feasible if $\zeta(x_k,w_k)>0$), we can show the following important inequality for $k\geq 0$,
\begin{equation*}
[\psi(x_k,w_k)]_+\nabla_x\psi(x_k,w_k)^\top d_k\leq -\alpha_k [\psi(x_k,w_k)]_+\rho(x_k,w_k)=-\alpha_k\zeta(x_k,w_k).
\end{equation*}
Combining these results and conducting some extra analysis, we can show the following result regarding the infeasibility residual function:
\begin{equation}\label{eq:bound-p-draft}
\begin{aligned}
p(x_{k+1})&\leq p(x_k) -2\gamma_k\alpha_k\zeta(x_k,w_k)+2\gamma_kC_\psi|g(x_k)-\psi(x_k,w_k)|\|d_k\|  
\\
&\quad + L_{xw}^{\psi}p(x_k)\|w_k-w^*(x_k)\|^2 +\frac{\gamma_k ^2(L_p(x_k)+2L_{xw}^{\psi})}{2}\| d_k\|^2, 
\end{aligned}
\end{equation}
where $w^*(x)=\mathcal P_{W^*(x)}(w_k)$ and $W^*(x)=\argmax_{w\in W}\psi(x,w)$. The detailed statements and proof of the results in \eqref{eq:bound f-draft} and \eqref{eq:bound-p-draft} are presented in Section \ref{sec:one-step}.

Although these results provide some bound on the progress of the next iterate with respect to objective and constraint functions, \eqref{eq:bound-p-draft} may not immediately lead to a convergence rate result due to the dependencies of non-negative terms on the right-hand side to $p(x_k)$--especially notice the effect of $p(x_k)$ in $L_p(x_k)$. To address this, we show that by carefully selecting the algorithm's parameters the sequence $\{\gamma_k\|d_k\|^2\}_{k\geq 0}$ is summable and $\{p(x_k)\}_{k\geq 0}$ is a bounded sequence -- see Lemma \ref{lem:bounded}. As a side result, we can conclude that there exists a constant $L_p$ that can upper bound the local Lipschitz constant $L_p(x)$ uniformly along the sequence $\{x_k\}_{k\geq 0}$ generated by Algorithm \ref{alg:sp_const}. Through this critical result, we can show the following theorem, establishing convergence bounds on $\|d_k\|^2$ and $[g(x_k)]_+$. 


\begin{theorem}\label{th:theorem1-draft}
Suppose Assumptions \ref{assum:objective}, \ref{assum:constraint}, and \ref{assum:reg} hold. Let $\{x_k,\lambda_k\}_{k\geq 0}$ be the sequence generated by Algorithm \ref{alg:sp_const} such that $\{\alpha_k\}_k$ is a non-increasing sequence and $\gamma_k \leq (L_f+L_{xy}^\phi)^{-1}$. 
Then, for any $T\geq 1$, 
\begin{align}
\textbf{(I)}\quad &\frac{1}{\Gamma_T}\sum_{k=0}^{T-1}\gamma_k\|d_k\|^2\leq \frac{2(f(x_0)-f(x_T))}{\Gamma_T}+ \frac{1}{\Gamma_T}\sum_{k=0}^{T-1}\gamma_k\alpha_k  (C_\phi+\alpha_k) +\frac{1}{\Gamma_T}\sum_{k=0}^{T-1}\mathcal E_k^y,\\
\textbf{(II)}\quad & 
\frac{1}{A_T}\sum_{k=0}^{T-1}\alpha_k[g(x_k)]_+^{2\theta}\leq \frac{\mu}{A_T}\sum_{k=0}^{T-1}\Big(\frac{p(x_k)}{\gamma_k}-\frac{p(x_{k+1})}{\gamma_k}\Big)+\frac{1}{A_T}\sum_{k=0}^{T-1}\mathcal E_k^w \nonumber \\
&+\frac{\mu(L_p+2L_{xw}^{\psi})}{2A_T}\sum_{k=0}^{T-1}\gamma_k\| d_k\|^2 ,
\end{align}
for some summable sequences $\{\mathcal E_k^y,\mathcal E_k^w\}_{k\geq 0}\subset \mathbb R_+$, where $\Gamma_T\triangleq \sum_{k=0}^{T-1}\gamma_k$ and $A_T\triangleq \sum_{k=0}^{T-1}\alpha_k$. 
\end{theorem}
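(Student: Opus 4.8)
\textbf{Proof proposal for Theorem \ref{th:theorem1-draft}.}

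The plan is to derive both bounds by telescoping the one-step descent inequalities \eqref{eq:bound f-draft} and \eqref{eq:bound-p-draft} after pre-multiplying by suitable weights, and then invoking the regularity Assumption \ref{assum:reg} to convert the $\zeta$-term into the infeasibility measure $[g(x_k)]_+^{2\theta}$. For part \textbf{(I)}, I would start from \eqref{eq:bound f-draft}, rearrange so the $\|d_k\|^2$ term is isolated on the left: since $\gamma_k \le (L_f+L_{xy}^\phi)^{-1}$ guarantees $\tfrac{\gamma_k^2(L_f+L_{xy}^\phi)}{2}-\gamma_k \le -\tfrac{\gamma_k}{2}$, we get
\[
\tfrac{\gamma_k}{2}\|d_k\|^2 \le f(x_k)-f(x_{k+1}) + \gamma_k\alpha_k(C_\phi+\alpha_k) + \tfrac{L_{xy}^\phi}{2}\|y_k-y^*(x_k)\|^2.
\]
Summing over $k=0,\dots,T-1$, the $f$-differences telescope to $f(x_0)-f(x_T)$, and setting $\mathcal E_k^y \triangleq L_{xy}^\phi\|y_k-y^*(x_k)\|^2$ (which is summable because $y_{k+1}$ is obtained by $N_k$ steps of (accelerated) gradient ascent on a strongly-concave/PL objective, so the inner error contracts geometrically and $N_k$ is chosen large enough — this is where the "summable sequence" claim is discharged, referencing the inner-loop analysis of Section \ref{sec:max}). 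Dividing through by $\Gamma_T$ yields \textbf{(I)} after multiplying by $2$.

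For part \textbf{(II)}, I would first divide \eqref{eq:bound-p-draft} by $\gamma_k$ to obtain, after rearranging,
\[
2\alpha_k\zeta(x_k,w_k) \le \frac{p(x_k)}{\gamma_k}-\frac{p(x_{k+1})}{\gamma_k} + 2C_\psi|g(x_k)-\psi(x_k,w_k)|\|d_k\| + L_{xw}^\psi\frac{p(x_k)}{\gamma_k}\|w_k-w^*(x_k)\|^2 + \frac{\gamma_k(L_p+2L_{xw}^\psi)}{2}\|d_k\|^2,
\]
where I have replaced $L_p(x_k)$ by the uniform bound $L_p$ from Lemma \ref{lem:bounded}. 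Now apply Assumption \ref{assum:reg} with $w=w_k$: since $\zeta(x_k,w_k) = \|\nabla_x\psi(x_k,w_k)[\psi(x_k,w_k)]_+\| \ge \tfrac{1}{\mu}[\psi(x_k,w_k)]_+^{2\theta}$, and then relate $[\psi(x_k,w_k)]_+$ to $[g(x_k)]_+$ — here I must be careful: $g(x_k)=\max_w\psi(x_k,w)\ge\psi(x_k,w_k)$, so $[g(x_k)]_+ \ge [\psi(x_k,w_k)]_+$, which gives an inequality in the \emph{wrong} direction for a lower bound on $\zeta$. The fix is to absorb the gap $|g(x_k)-\psi(x_k,w_k)|$ — controlled by the inner-loop accuracy $M_k$ on the (strongly-concave/PL) map $\psi(x_k,\cdot)$ — into an additional summable error term, so that $[\psi(x_k,w_k)]_+^{2\theta} \ge [g(x_k)]_+^{2\theta} - (\text{small correction})$; collecting the correction together with the $|g(x_k)-\psi(x_k,w_k)|\|d_k\|$ cross-term and the $\|w_k-w^*(x_k)\|^2$ term (all geometrically small in $M_k$, using boundedness of $\|d_k\|$ and $p(x_k)$ from Lemma \ref{lem:bounded}) defines $\mathcal E_k^w$. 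Multiplying by $\mu/2$, summing over $k$, and dividing by $A_T$ gives \textbf{(II)}.

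The main obstacle I anticipate is the bookkeeping around the inexactness in the $w$-update: establishing that the combined error term $\mathcal E_k^w$ is genuinely summable requires (a) the uniform bound $L_p$ on the local Lipschitz constant along the trajectory, which in turn depends on $\{p(x_k)\}$ being bounded (Lemma \ref{lem:bounded}), and (b) a quantitative contraction rate for the accelerated gradient ascent inner loop on $\psi(x_k,\cdot)$ translating $M_k$ steps into an $O(q^{M_k})$ bound on both $\|w_k-w^*(x_k)\|^2$ and $|g(x_k)-\psi(x_k,w_k)|$, with $\{M_k\}$ chosen so these are summable despite being multiplied by the (bounded but not vanishing) factors $\|d_k\|$ and $p(x_k)/\gamma_k$. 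Since $p(x_k)/\gamma_k$ appears, one needs $\gamma_k$ not to decay too fast relative to the inner accuracy — so the choice of $\{M_k, N_k, \gamma_k, \alpha_k\}$ must be coordinated, and I would handle this by stating the precise parameter schedule (e.g. $M_k = \Theta(\log(k+2))$ suffices for geometric contraction) as part of the hypotheses carried over from the one-step lemmas in Section \ref{sec:one-step}. The rest is routine telescoping.
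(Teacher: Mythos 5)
Your proposal is correct and follows essentially the same route as the paper: part (I) is obtained by telescoping the one-step objective inequality under the stepsize condition with $\mathcal E_k^y$ coming from the geometric inner-loop error on $y$, and part (II) by combining the one-step infeasibility inequality with the uniform Lipschitz constant $L_p$ (via the boundedness of $\{p(x_k)\}$), applying Assumption~\ref{assum:reg} to lower-bound $\zeta(x_k,w_k)$, and repairing the $[\psi(x_k,w_k)]_+$ versus $[g(x_k)]_+$ direction mismatch through the inner-loop suboptimality bound \eqref{eq:subopt-psi}, exactly as in the paper's argument. The subtleties you flag — the wrong-way inequality, the $p(x_k)/\gamma_k$ factor, and the required coordination of $\{M_k,\gamma_k\}$ for summability of $\mathcal E_k^w$ — are precisely the points the paper's proof handles, so no gap remains.
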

\begin{proof}
See Section \ref{sec:proof-thm} in the Appendix.
\end{proof}

Theorem \ref{th:theorem1-draft} provides an upper bound on the accumulation of sequences of direction $d_k$ and infeasibility measure $[g(x_k)]_+$. While these quantities are not initially expressed in terms of KKT residuals, the following theorem establishes their connection to $\epsilon$-KKT conditions and derives the resulting complexity guarantees for the proposed algorithm. 

\begin{theorem}\label{th:theorem2-draft}
Suppose Assumptions \ref{assum:objective}, \ref{assum:constraint}, and \ref{assum:reg} hold. Let $\{x_k,\lambda_k\}_{k= 0}^{T-1}$ be the sequence generated by Algorithm \ref{alg:sp_const} such that for any $k\geq0$, $\alpha_k=\frac{T^{1/3}}{(k+2)^{1+\omega}}$, for some small $\omega>0$, $\gamma_k=\gamma =\mathcal O(\min\{\frac{1}{T^{1/3}}, (L_f+L_{xy}^\phi)^{-1}\})$, $N_k=\mathcal O(\log(k+1))$, and $M_k=\mathcal O(\max\{\max\{1,\frac{1}{2\theta}\}\log(T),\log(T[\psi(x_k,w_k)]_+^{4\theta-2})\})$ if $\zeta(x_k,w_k)>0$, otherwise, 
$M_k=\mathcal O(\max\{1,\frac{1}{2\theta}\}\log(T))$. Then, for any $\epsilon>0$, there exists $t\in\{0,\hdots, T-1\}$ such that 
\begin{enumerate}
    \item (Stationarity) $\|\nabla f(x_t)+\lambda_t\nabla g(x_t)\|\leq \epsilon$ within $T=\mathcal O(\frac{1}{\epsilon^3})$ iterations;
    \item (Feasibility) $[g(x_t)]_+\leq \epsilon$ within $T=\mathcal O(\frac{1}{\epsilon^{6\theta}})$ iterations;
    \item (Slackness) $|\lambda_t g(x_t)|\leq \epsilon$ within $T=\mathcal O(\frac{1}{\epsilon^{3\theta/(1-\theta)}})$ iterations.
\end{enumerate}
\end{theorem}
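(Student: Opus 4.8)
The plan is to derive the three complexity bounds from Theorem~\ref{th:theorem1-draft} by carefully balancing the parameter choices and then translating the bounds on $\|d_k\|^2$ and $[g(x_k)]_+$ into the three $\epsilon$-KKT residuals. First I would establish the order of magnitude of the key partial sums under the prescribed schedule $\alpha_k = T^{1/3}/(k+2)^{1+\omega}$ and $\gamma_k=\gamma=\mathcal{O}(\min\{T^{-1/3},(L_f+L_{xy}^\phi)^{-1}\})$: since $\sum_k (k+2)^{-1-\omega}$ converges, we get $A_T=\sum_{k=0}^{T-1}\alpha_k = \Theta(T^{1/3})$, while $\Gamma_T=\gamma T = \mathcal{O}(T^{2/3})$. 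The term $\sum_k \gamma_k\alpha_k(C_\phi+\alpha_k)$ in part (I) contributes $\gamma\cdot\mathcal{O}(T^{1/3})\cdot\mathcal{O}(1)=\mathcal{O}(1)$ because $\gamma \le T^{-1/3}$ and $\sum_k \alpha_k(C_\phi+\alpha_k)=\mathcal{O}(T^{1/3})$ (note $\alpha_k^2$ is summable, contributing only $\mathcal{O}(1)$). With $f$ bounded below (which follows from $\nabla_x\phi$ bounded plus the implicit descent structure, and is implicitly used already), $f(x_0)-f(x_T)=\mathcal{O}(1)$; and $\{\mathcal{E}^y_k\}$ is summable, so $\sum_k \mathcal{E}^y_k=\mathcal{O}(1)$. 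Dividing by $\Gamma_T=\Theta(T^{2/3})$ and using $\gamma_k\equiv\gamma$ gives $\frac{1}{T}\sum_{k=0}^{T-1}\|d_k\|^2 = \mathcal{O}(T^{-2/3})$, hence $\min_k \|d_k\|^2 = \mathcal{O}(T^{-2/3})$. Taking $T=\mathcal{O}(\epsilon^{-3})$ yields some $t$ with $\|d_t\|=\mathcal{O}(\epsilon)$.

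Next I would handle part (II) to extract the feasibility rate. The summability of $\{\gamma_k\|d_k\|^2\}$ (Lemma~\ref{lem:bounded}) makes $\sum_{k=0}^{T-1}\gamma_k\|d_k\|^2=\mathcal{O}(1)$, and the telescoping term $\mu\sum_k(p(x_k)/\gamma_k - p(x_{k+1})/\gamma_k) = \frac{\mu}{\gamma}(p(x_0)-p(x_T)) = \mathcal{O}(1/\gamma)=\mathcal{O}(T^{1/3})$ since $\gamma=\Theta(T^{-1/3})$; together with $\sum_k\mathcal{E}^w_k=\mathcal{O}(1)$ and $L_p$ uniformly bounded, part (II) gives $\frac{1}{A_T}\sum_k \alpha_k [g(x_k)]_+^{2\theta} = \mathcal{O}(T^{1/3}/A_T)=\mathcal{O}(T^{1/3}/T^{1/3})=\mathcal{O}(1)$. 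That by itself is not enough, so the refinement is to use that $\alpha_k$ is bounded below on the relevant range or, better, to note that $\frac{1}{A_T}\sum_k\alpha_k[g(x_k)]_+^{2\theta}=\mathcal O(1)$ means $\min_{k}[g(x_k)]_+^{2\theta}\le \mathcal O(1)$ which is vacuous — so instead I must be more careful. The correct route: combine the weighted average with the fact that $\alpha_k \ge T^{1/3}/(T+1)^{1+\omega}=\Theta(T^{-2/3-\omega})$ on $k\le T-1$; actually the cleaner argument, which I expect the paper uses, is that the telescoping term divided by $A_T$ is $\mathcal{O}(\frac{1/\gamma}{A_T})=\mathcal O(T^{1/3}/T^{1/3})$, wait — this needs the $\gamma$-dependence tracked: $\frac{1}{A_T}\cdot\frac{\mu}{\gamma}\mathcal{O}(1)=\mathcal{O}(\frac{T^{1/3}}{T^{1/3}})=\mathcal{O}(1)$ only if we do not lose the extra factor. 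Reexamining, $\frac{1}{A_T}\sum_k\alpha_k\|d_k\|^2\cdot\gamma$-type terms are $\mathcal{O}(1/A_T)=\mathcal{O}(T^{-1/3})$. So $\frac1{A_T}\sum_k\alpha_k[g(x_k)]_+^{2\theta}=\mathcal{O}(T^{-1/3}) + \mathcal O(\text{telescope}/A_T)$. If $p(x_k)/\gamma_k$ telescopes cleanly this is $\frac{\mu p(x_0)}{\gamma A_T}=\mathcal{O}(T^{-1/3})$. Hence $\min_{k} [g(x_k)]_+^{2\theta}=\mathcal{O}(T^{-1/3})$, i.e. $\min_k [g(x_k)]_+ = \mathcal{O}(T^{-1/6\theta})$, and setting $T=\mathcal{O}(\epsilon^{-6\theta})$ gives feasibility $\le\epsilon$.

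For part (I)'s stationarity claim and part (III)'s slackness claim, I would tie $\|d_k\|$ to the KKT residual: by construction $d_k = -\nabla_x\phi(x_k,y_k)-\lambda_k\nabla_x\psi(x_k,w_k)$, so $\|\nabla f(x_k)+\lambda_k\nabla g(x_k)\| \le \|d_k\| + \|\nabla f(x_k)-\nabla_x\phi(x_k,y_k)\| + \lambda_k\|\nabla g(x_k)-\nabla_x\psi(x_k,w_k)\|$, and the two correction terms are controlled by $L^\phi_{xy}\|y_k-y^*(x_k)\|$ and $\lambda_k L^\psi_{xw}\|w_k-w^*(x_k)\|$ using Assumptions~\ref{assum:objective}–\ref{assum:constraint} and the Danskin-type differentiability of $f,g$; the inner-loop counts $N_k,M_k$ are chosen precisely so these errors are $\mathcal{O}(\epsilon)$ (the $M_k=\mathcal{O}(\log(T[\psi(x_k,w_k)]_+^{4\theta-2}))$ branch compensates for the $\lambda_k$ blow-up, using $\lambda_k\|\nabla_x\psi\|\le\|\nabla_x\phi\|+\alpha_k$ and linear convergence of (accelerated) ascent under strong concavity / PL). For slackness, $|\lambda_t g(x_t)|=\lambda_t[g(x_t)]_+$ when $g(x_t)>0$; bounding $\lambda_t \le (C_\phi+\alpha_t)/\|\nabla_x\psi(x_t,w_t)\|$ and using Assumption~\ref{assum:reg} to relate $[g(x_t)]_+^{2\theta}$ back to $\|\nabla_x\psi\,[\psi]_+\|$ lets one show $\lambda_t[g(x_t)]_+ \lesssim [g(x_t)]_+^{1-1/(2\theta)}\cdot(\dots)$ — more directly, using the regularity inequality one bounds $\lambda_t[g(x_t)]_+ = \mathcal{O}([g(x_t)]_+^{1-\theta})\cdot\mathcal{O}(\alpha_t + C_\phi)/\mu$-type, giving the $\epsilon^{-3\theta/(1-\theta)}$ rate after substituting the feasibility bound. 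The main obstacle I anticipate is the feasibility argument in part (II): the coupling of $L_p(x_k)$ with $p(x_k)$ means the telescoping is only legitimate after Lemma~\ref{lem:bounded} guarantees $\{p(x_k)\}$ bounded and $\{\gamma_k\|d_k\|^2\}$ summable, and then one must still exhibit a single index $t$ achieving \emph{all three} residual bounds simultaneously at the chosen $T$ — this requires checking that the "bad sets" where each residual is large have density less than $1$, or more simply arguing on a min over $k$ for each criterion separately with $T$ taken as the max of the three required horizons, which is exactly what the statement "there exists $t$" with a single $T=\mathcal O(\epsilon^{-3})$ dominating demands care to reconcile.
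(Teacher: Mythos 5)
Your overall route is the same as the paper's: instantiate Theorem~\ref{th:theorem1-draft} with the stated schedule, use Lemma~\ref{lem:bounded} to make $\sum_k\gamma_k\|d_k\|^2$ finite and $L_p(x_k)$ uniformly bounded, convert $\|d_k\|$ into the stationarity residual via the two inexactness corrections $L_{xy}^\phi\|y_k-y^*(x_k)\|$ and $\lambda_k L_{xw}^\psi\|w_k-w^*(x_k)\|$ (with the $[\psi(x_k,w_k)]_+^{4\theta-2}$ branch of $M_k$ absorbing the $\lambda_k$ blow-up exactly as you describe), and obtain slackness from Lemma~\ref{lem:dual-bound} combined with Assumption~\ref{assum:reg}. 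The "single index" worry you raise at the end is handled in the paper by simply defining $t$ as the argmin over $k$ of the maximum of the three residuals, and each part of the theorem carries its own horizon $T$, so no simultaneous bound at a common $T$ is claimed.

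Two quantitative steps in your write-up do not close, however. First, in the feasibility part your final assertion $\frac{\mu p(x_0)}{\gamma A_T}=\mathcal O(T^{-1/3})$ contradicts your own earlier, correct computation: with $A_T=\Theta(T^{1/3})$ and $\gamma=\Theta(T^{-1/3})$ one has $\gamma A_T=\Theta(1)$, so the telescoped initial-infeasibility term is $\Theta(p(x_0))=\mathcal O(1)$ and the weighted average $\frac{1}{A_T}\sum_k\alpha_k[g(x_k)]_+^{2\theta}$ is only shown to be $\mathcal O(1)$, which, as you yourself observe, is vacuous. (The paper's own proof of Part~2 glosses over this same term, so this is a shared weak point rather than a divergence from the paper, but your proposal leaves it open: one needs either a near-feasible start with $p(x_0)=\mathcal O(T^{-1/3})$ or an argument that makes $A_T\gamma=\Omega(T^{1/3})$.) Second, the slackness exponent: Lemma~\ref{lem:dual-bound} gives $\lambda_t[g(x_t)]_+\le C[g(x_t)]_+^{2-2\theta}+C\Delta_2^w\delta_w^{M_t}[\psi(x_t,w_t)]_+^{1-2\theta}$, and it is the power $2-2\theta$, applied to $[g(x_t)]_+=\mathcal O(T^{-1/(6\theta)})$, that produces $|\lambda_t g(x_t)|=\mathcal O(T^{-(1-\theta)/(3\theta)})$ and hence $T=\mathcal O(\epsilon^{-3\theta/(1-\theta)})$. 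Your candidate exponents $[g]_+^{1-1/(2\theta)}$ and $[g]_+^{1-\theta}$ are both incorrect and neither reproduces the stated rate; the $2-2\theta$ arises from multiplying the regularity bound $\|\nabla_x\psi(x_t,w_t)\|^{-1}\le\mu[\psi(x_t,w_t)]_+^{1-2\theta}$ by one additional power of $[g(x_t)]_+$.
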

\begin{proof}
See Section \ref{sec:proof-thm} in the Appendix.
\end{proof}

\begin{remark}
We would like to state some important remarks regarding the complexity result in the above theorem.\\
\textbf{(i) Convergence result:} We highlight that the obtained complexity results are, to the best of our knowledge, the first established non-asymptotic complexity bounds for nonconvex semi-infinite min-max problems. Owing to modest assumptions and a simple algorithmic structure, the proposed method is broadly applicable to problems, including those involving instances of DNNs.\\
\textbf{(ii) Special case of $\theta = \frac{1}{2}$:} The results of Theorem \ref{th:theorem2-draft} simplify significantly when $\theta = \frac{1}{2}$, which corresponds to the PL condition on the squared inexact infeasibility residual $[\psi(x,w)]_+^2$ for any $w \in W$--see Assumption \ref{assum:reg}. In this case, we have $M_k = \mathcal O(\log(T))$, and the complexity for all three metrics improves to $\mathcal{O}(1/\epsilon^3)$, matching the best known results in \cite{li1485rate,lu2022single} for nonconvex optimization problems with finitely many constraints.\\
\textbf{(iii) Selection of $M_k$:} The choice of $M_k$ depends on the inexact infeasibility residual function when $\theta \in (0,1)$, arising from the infinite cardinality of constraints. Here, $M_k$ controls the accuracy of estimating the iterate $w_k$ from this collection. As the iterates approach the approximate feasible region, increasingly accurate estimates are needed to ensure convergence to the true feasible region. 
\end{remark}

\section{Numerical Experiments}\label{sec:numeric}
In this section, we evaluate the performance of the proposed iDB-PD algorithm on the Robust Multi-Task 2 problem with a task priority, as introduced in Section \ref{sec:app}. All experiments were implemented in PyTorch and executed on Google Colab, using a virtual machine equipped with an NVIDIA A100-SXM4 GPU (40 GB), an Intel\textsuperscript{\textregistered} Xeon\textsuperscript{\textregistered} CPU @ 2.20 GHz, 87 GB of RAM, and running Ubuntu 22.04.4 LTS with Python 3.12.
We consider the following robust MTL formulation, where the goal is to learn two related tasks by optimizing a shared parameter vector. Specifically, the objective is to minimize the worst-case loss of a prioritized task while ensuring that the loss of the remaining task remains below a specified threshold denoted by $r>0$, formulated as follows:
\vspace{-0.5\baselineskip}
\begin{subequations}\label{eq:multi-task numeric}
\begin{align}
\min_{x \in \mathbb{R}^d} \max_{y \in \Delta_n} 
&\quad \sum_{i=1}^n y_i \ell_1(x,\xi_i^{(1)}) 
\;-\; g_n(y) \\
\text{s.t.} \quad 
&\sum_{j=1}^m w_j \ell_2(x,\xi_j^{(2)}) \;-\;g_m(w)\leq r, \quad \forall w\in\Delta_m,
\end{align}
\end{subequations}
where $\Delta_n$ and $\Delta_m$ are simplex sets. 
Note that, $g_n(y)=\frac{\lambda n}{2} \| y - \frac{1}{n}\mathbf{1}_n \|_2^2$ and $g_m(w)=\frac{\lambda m}{2} \|w - \frac{1}{m}\mathbf{1}_m \|_2^2$ are regularization terms that restrict the worst-case distributions from deviating significantly from the uniform distribution. Here, $\mathbf{1}$ denotes the all-ones vector. 
We consider five different datasets and, for each, evenly partition the labels into two disjoint subsets, and the goal of each task is to learn the corresponding labels. We consider a fully connected neural network with one hidden layer and tanh activation functions. The output layer is a softmax with a cross-entropy loss function $\ell_i(\cdot)$. 


\begin{wraptable}{r}{0.5\textwidth}
\vspace{-4mm}
\caption{Summary of datasets used in the experiments.}
\scriptsize 
\centering
\begin{tabular}{lccc}
\toprule
\textbf{Dataset} & \textbf{Instances} & \textbf{Features} & \textbf{Labels} \\
\midrule
Multi-MNIST             & 20000             & 1296              & 10 \\
CHD49                   & 555               & 49                & 6 \\
Multi-Fashion MNIST     & 20000             & 1296              & 10 \\
Yeast                   & 2417              & 103               & 14 \\
20NG                    & 19300             & 1006              & 20 \\
\bottomrule
\end{tabular}
\label{tab:dataset-summary}\vspace{-3mm}
\end{wraptable}
Our experiment includes five datasets as summarized in Table \ref{tab:dataset-summary}. 
We used Multi-MNIST and Multi-Fashion-MNIST from Lin et al. (2019) \cite{lin2019pareto}\footnote{Datasets by Lin et al. (2019): https://github.com/Xi-L/ParetoMTL/}, which were constructed from the original MNIST dataset. For these datasets, each data point is constructed by randomly sampling two different images from the original MNIST (Fashion-MNIST) dataset and combining them into a single image, placing the two digits (or articles of clothing) on the top left and bottom right corners, resulting in a $36 \times 36$ image. 
Yeast, Coronary Heart Disease, and 20NewsGroup datasets were accessed via the  Multi-Label Classification Dataset Repository hosted by Universidad de Córdoba\footnote{https://www.uco.es/kdis/mllresources/}. For each of these datasets, we evenly divide the label set in two, with one half learned in the objective and the other half learned in the constraint. Here, we report our results on Multi-MNIST and CHD49, while results on the remaining datasets (Multi-Fashion MNIST, Yeast, and 20NG) are included in Appendix~\ref{sec:additional-numeric}.%
\vspace{-5mm}
\paragraph{Experiment 1.} 
In this experiment, we compare iDB-PD with an adaptive discretization method, which iteratively adds the most violated constraints to form a finite approximation of the semi-infinite min–max problem following Blankenship and Falk~\cite{blankenship1976infinitely}. The resulting discretized problem is then solved by COOPER~\cite{gallego2025cooper}, a PyTorch library for constrained optimization that implements first-order, Lagrangian-based update schemes. From Figure \ref{fig:idbpd-ad} we observe that our proposed method (iDB-PD) consistently achieves convergence in infeasibility, stationarity, and slackness, whereas the adaptive discretization fails to reduce infeasibility and diverges in stationarity. This result underscores the advantage of iDB-PD over more classical discretization methods.

\begin{figure}[H]
  \centering
  \includegraphics[width=0.3\textwidth]{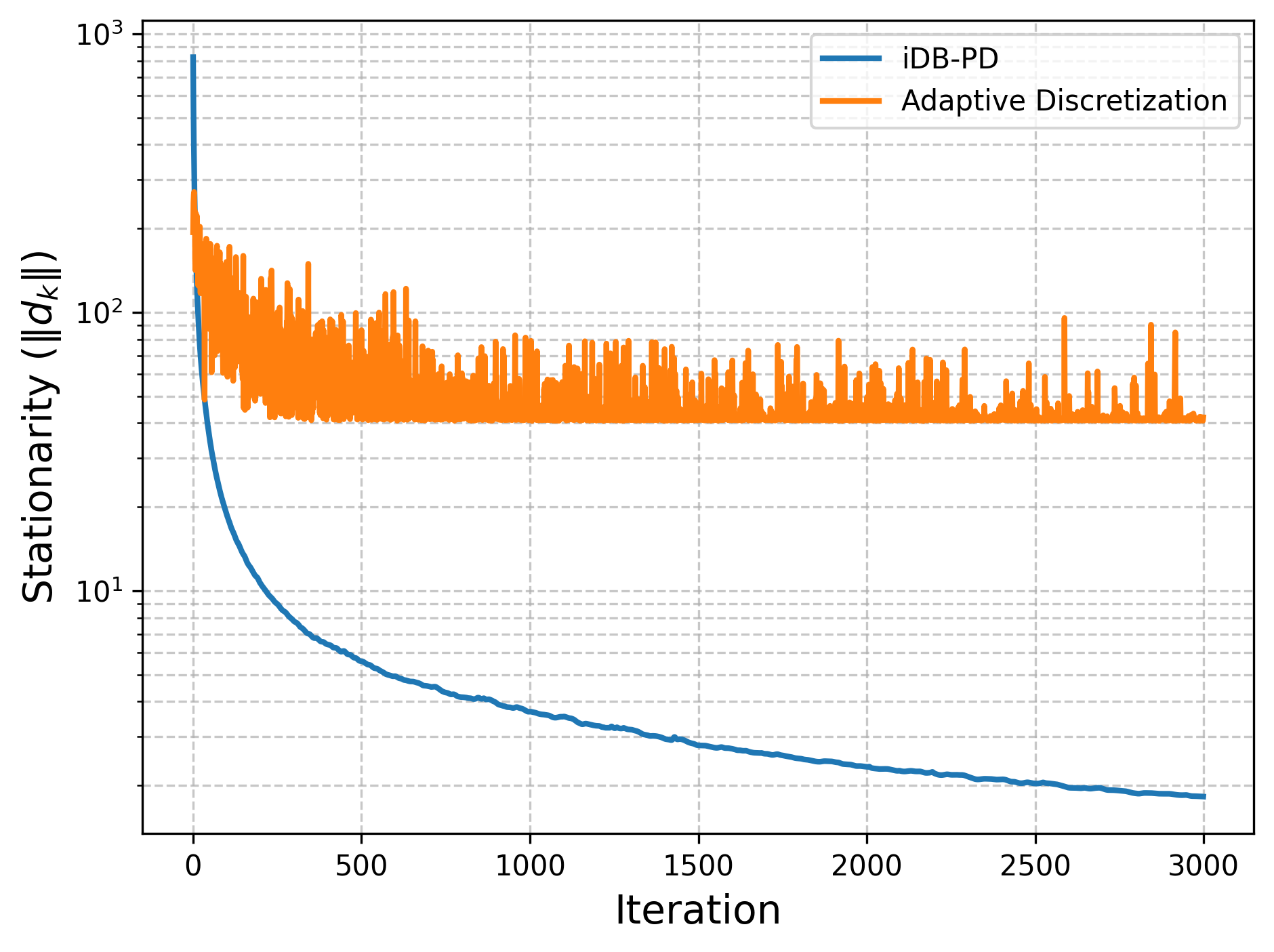}\hfill
  \includegraphics[width=0.3\textwidth]{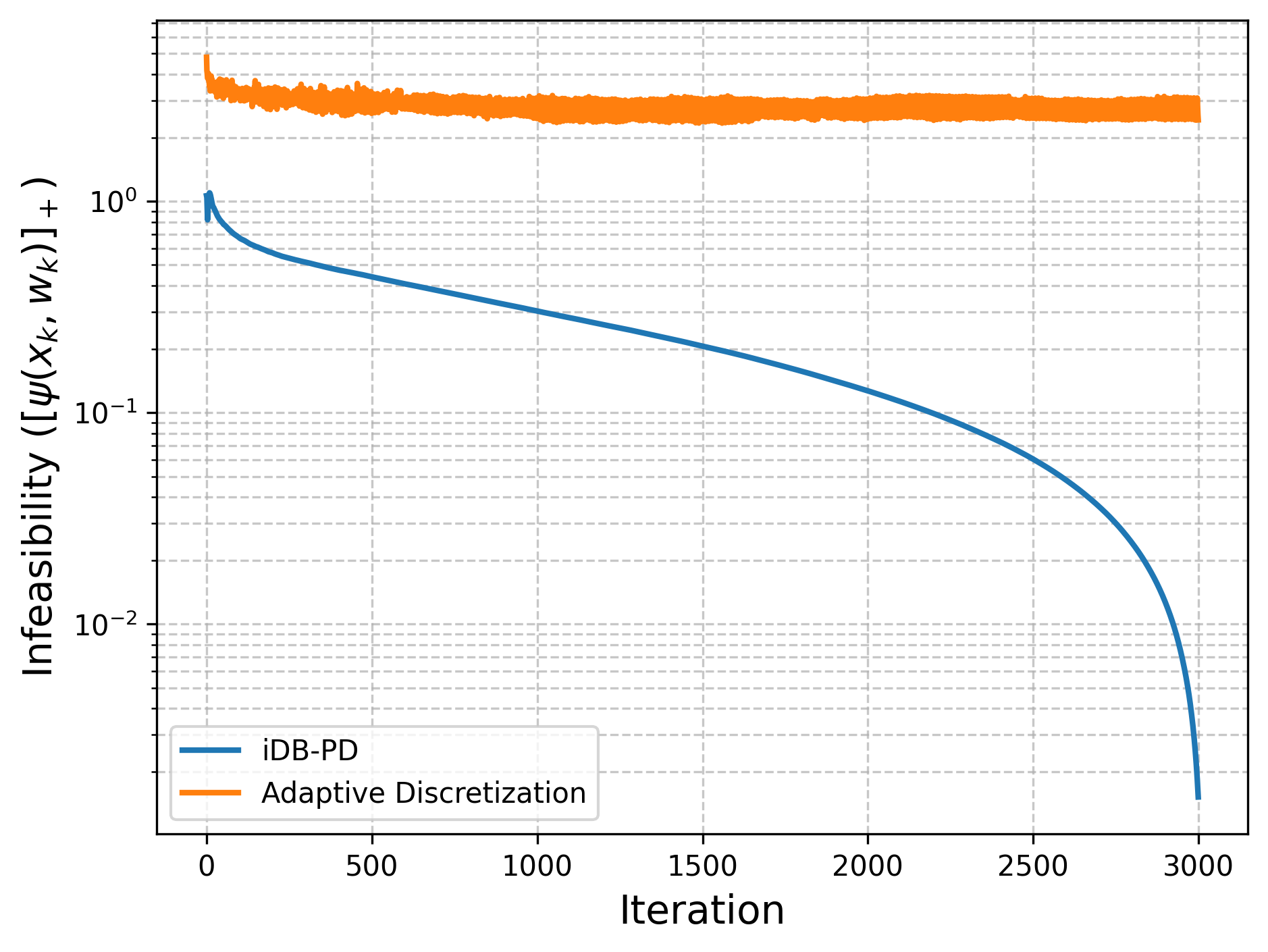}\hfill
  \includegraphics[width=0.3\textwidth]{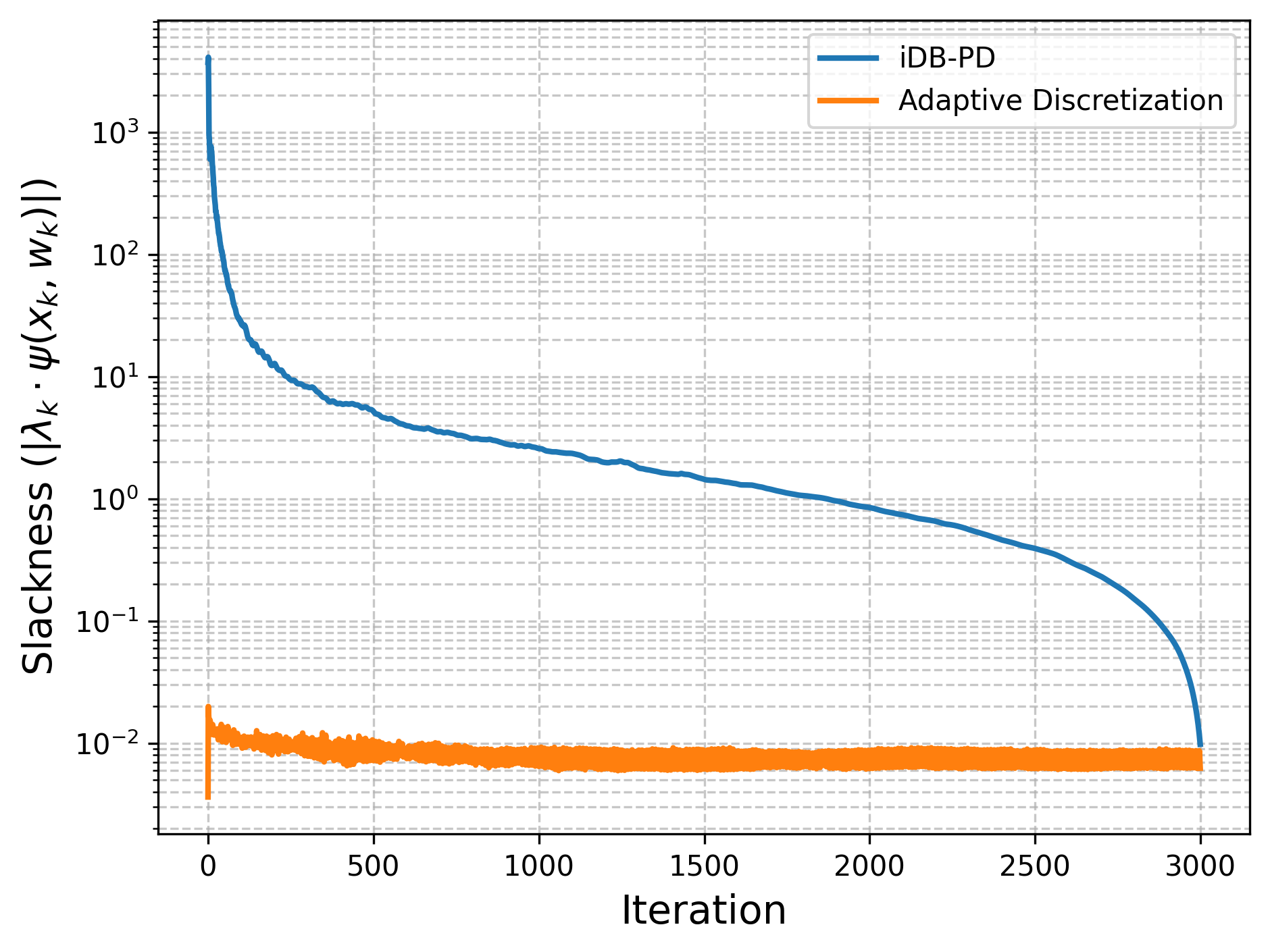}
  \includegraphics[width=0.3\textwidth]{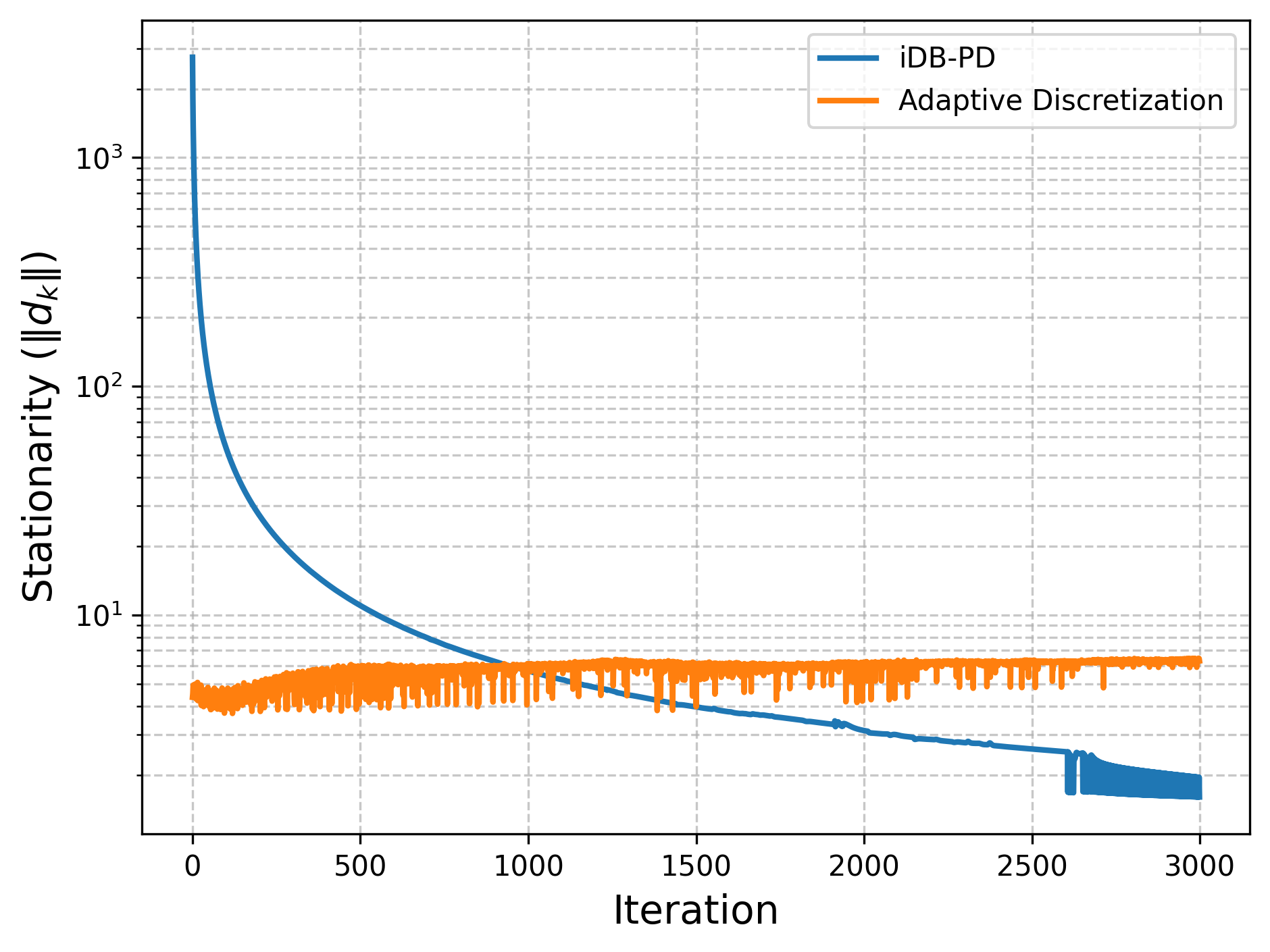}\hfill
  \includegraphics[width=0.3\textwidth]{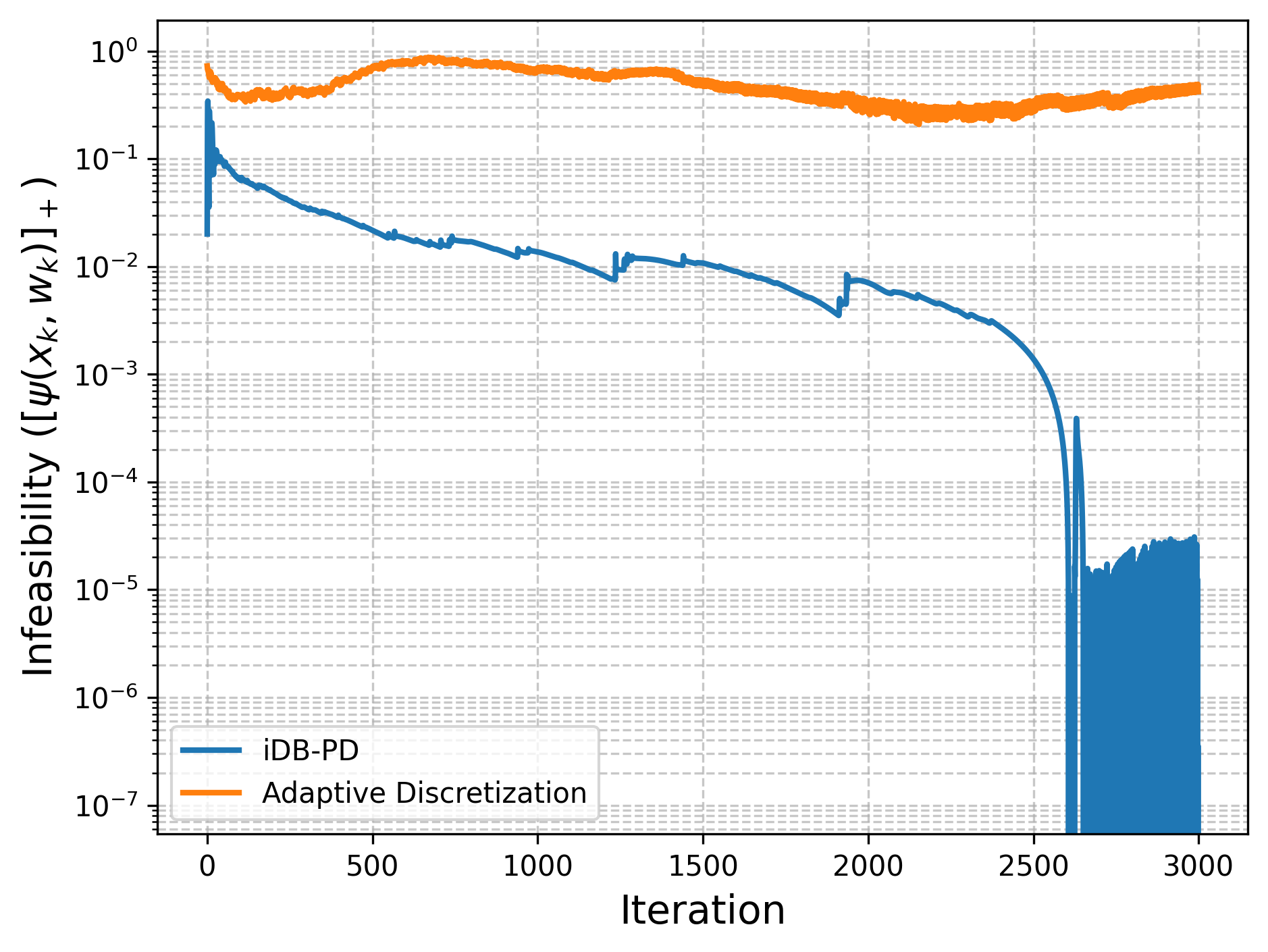}\hfill
  \includegraphics[width=0.3\textwidth]{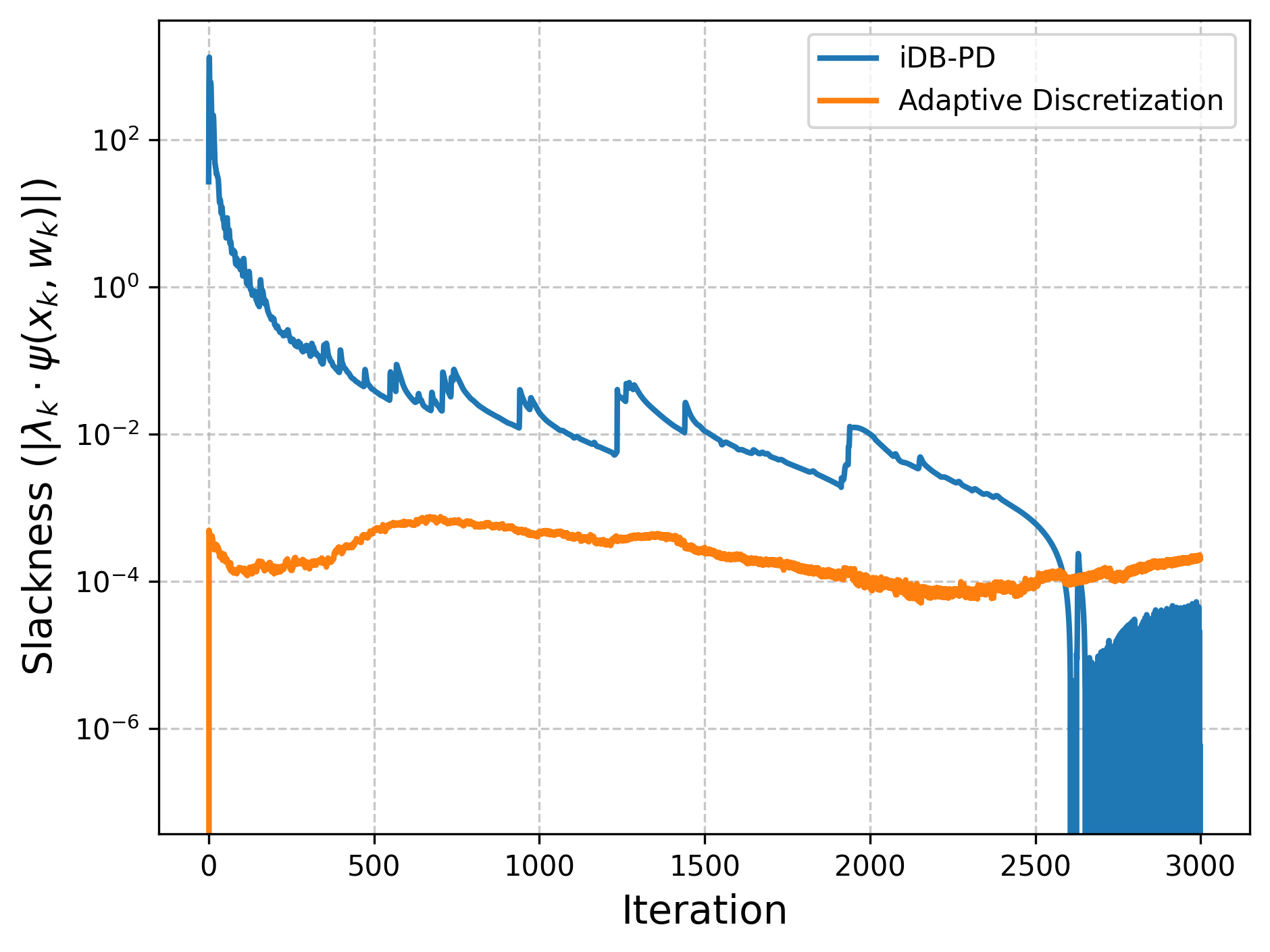}
  \caption{iDB-PD vs.\ Adaptive Discretization with COOPER on multi-MNIST (top row) and CHD49 (bottom row), evaluated in terms of stationarity, infeasibility, and slackness.}
  \label{fig:idbpd-ad}
\end{figure}
\vspace{-5mm}
\paragraph{Experiment 2.} In this experiment, we compare the performance of the model in \eqref{eq:multi-task numeric} with an alternative model in which the objective function is a weighted summation of loss functions with a DRO formulation. This results in the following min-max problem formulation 
\begin{equation*}
    \min_{x\in\mathbb R^d}\max_{y\in\Delta_n,w\in\Delta_m} \sum_{i=1}^n y_i \ell_1(x,\xi_i^{(1)}) - g_n(y) +\rho \left(\sum_{j=1}^m w_j \ell_2(x,\xi_j^{(2)}) 
- g_m(w)\right). 
\end{equation*}
For a fair comparison, we applied the Gradient Descent Multi-Ascent (GDMA) method \cite{jin2020local,nouiehed2019solving} to solve this reformulated min-max problem, testing several values $\rho\in\{1,2,5,10\}$. We evaluated both approaches in terms of the metrics corresponding to \eqref{eq:multi-task numeric}, i.e.,  the objective function value, infeasibility, and stationarity. Note that the first two metrics correspond to the training losses of task 1 and task 2, respectively.
\begin{figure}[H]
  \centering
  \begin{minipage}[t]{0.3\textwidth}
    \centering
    \includegraphics[width=\linewidth]{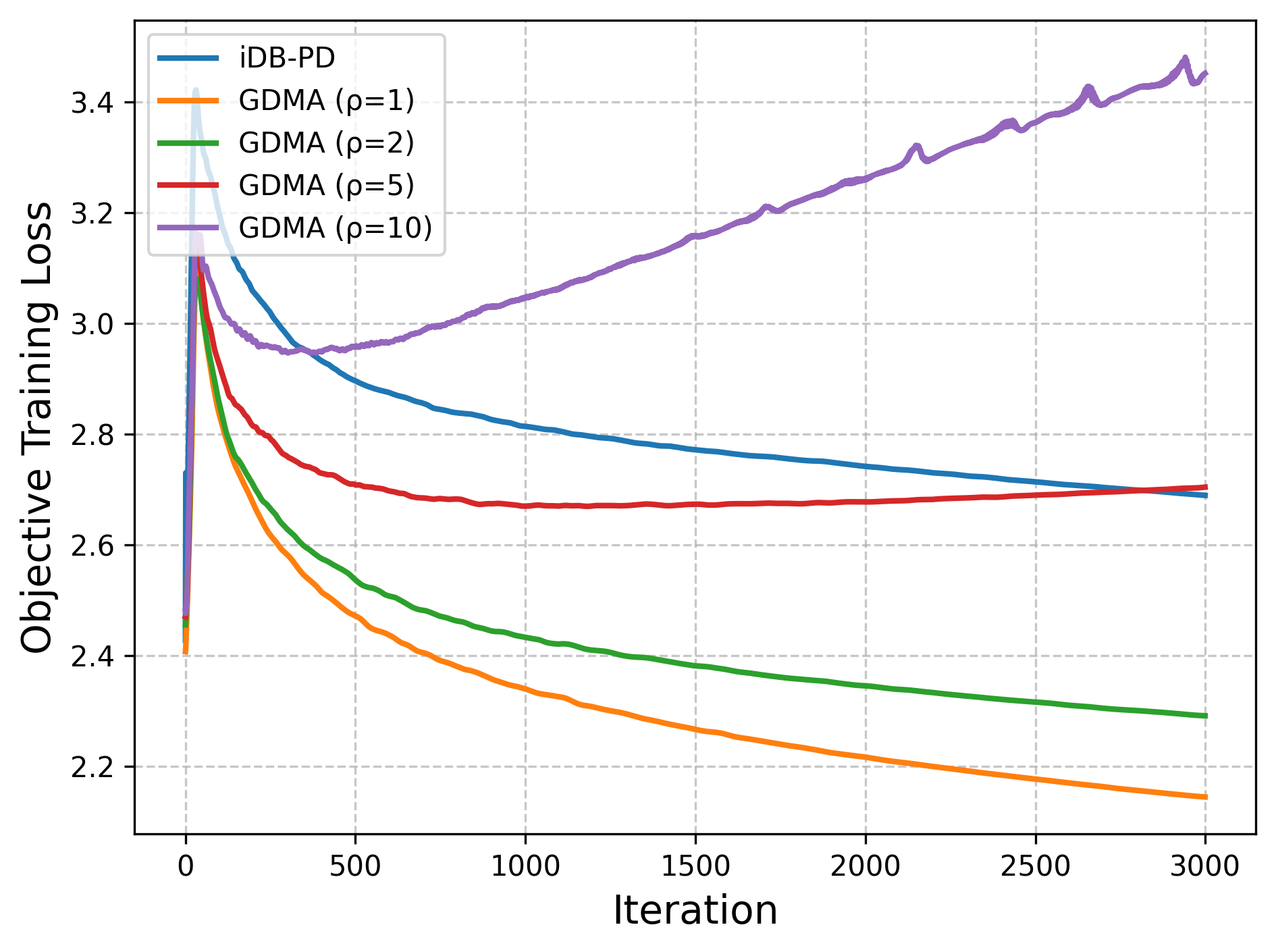}
  \end{minipage}\hfill
  \begin{minipage}[t]{0.3\textwidth}
    \centering
    \includegraphics[width=\linewidth]{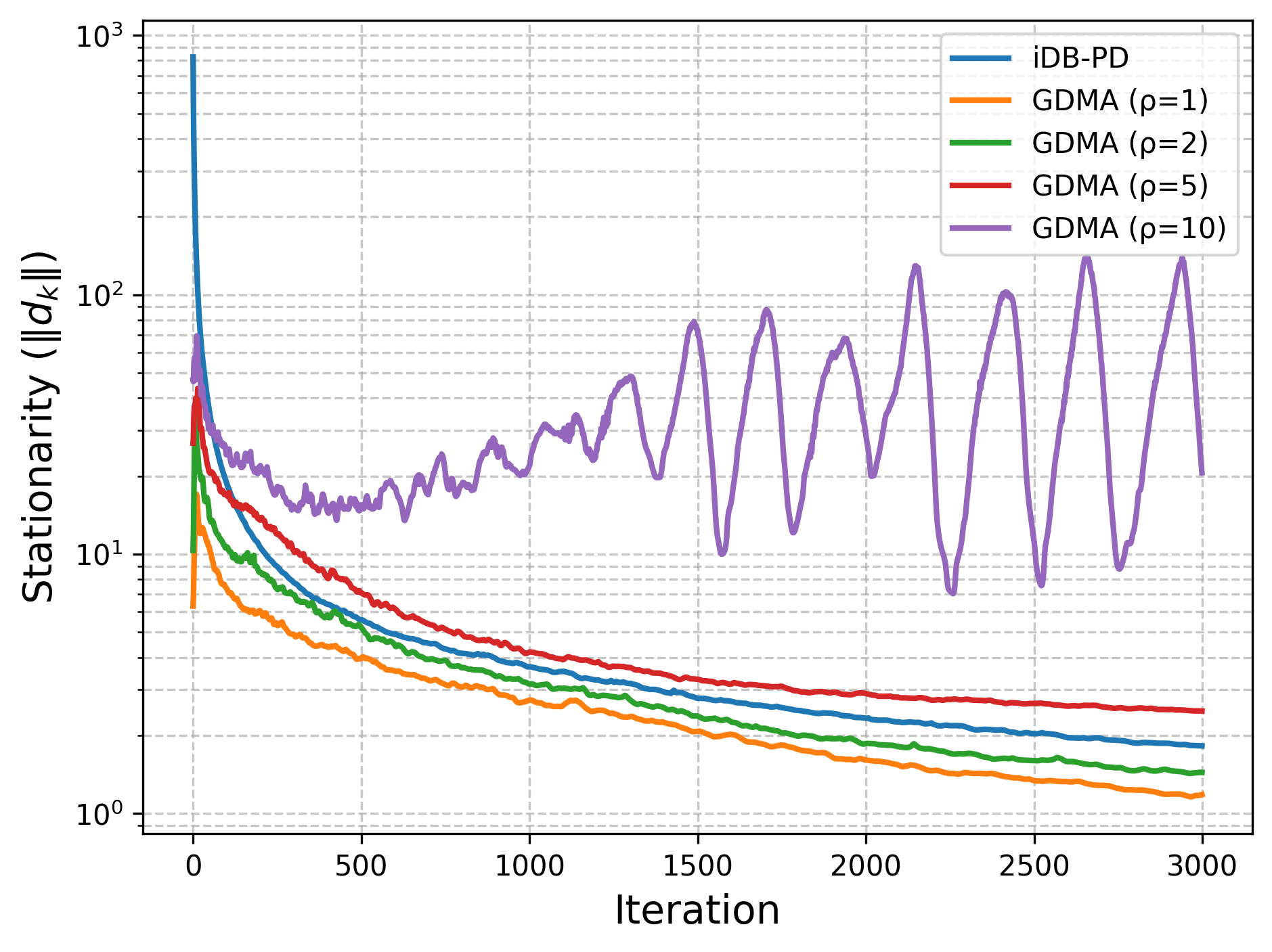}
  \end{minipage}\hfill
  \begin{minipage}[t]{0.3\textwidth}
    \centering
    \includegraphics[width=\linewidth]{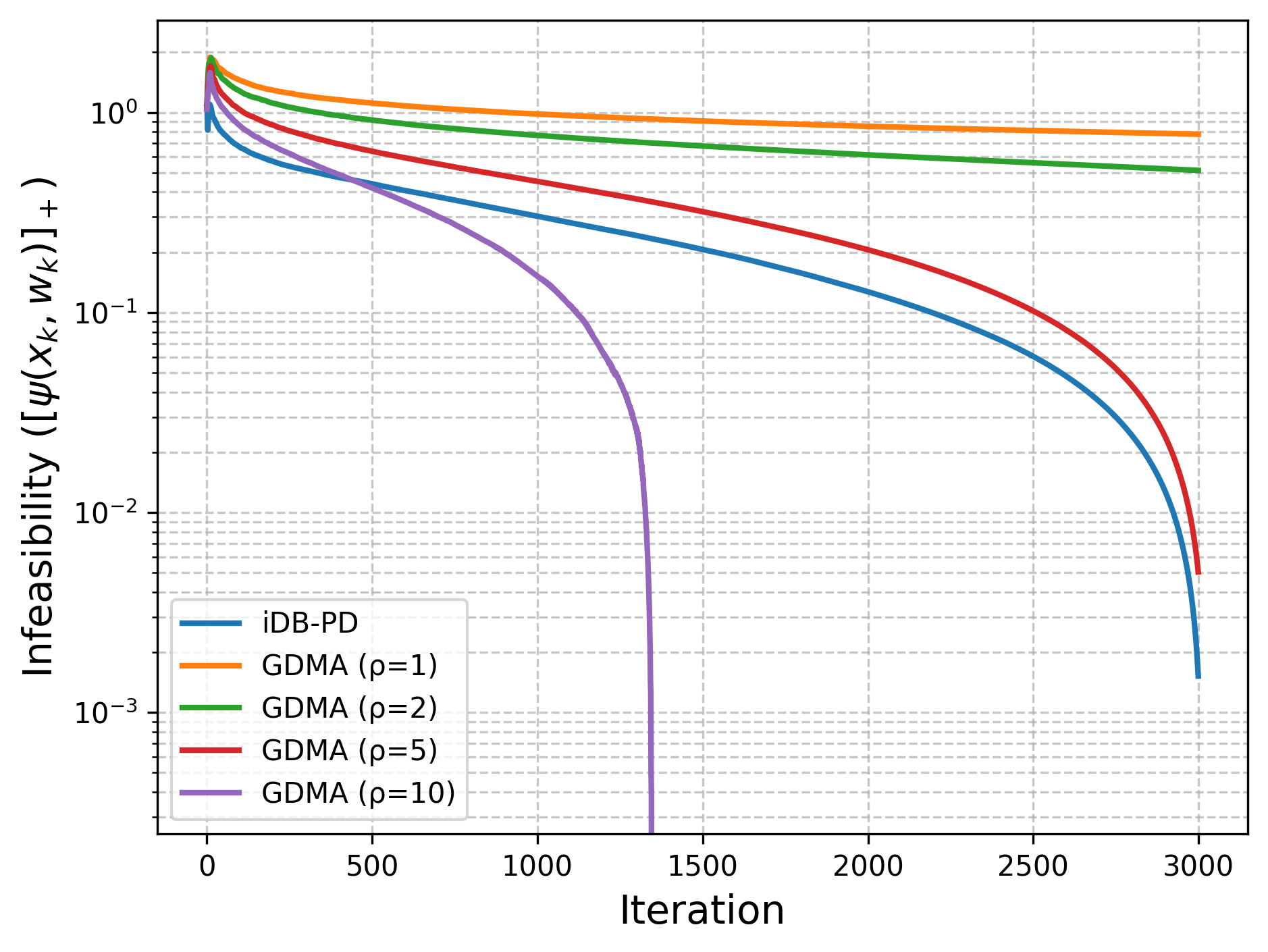}
  \end{minipage}

  \begin{minipage}[t]{0.3\textwidth}
    \centering
    \includegraphics[width=\linewidth]{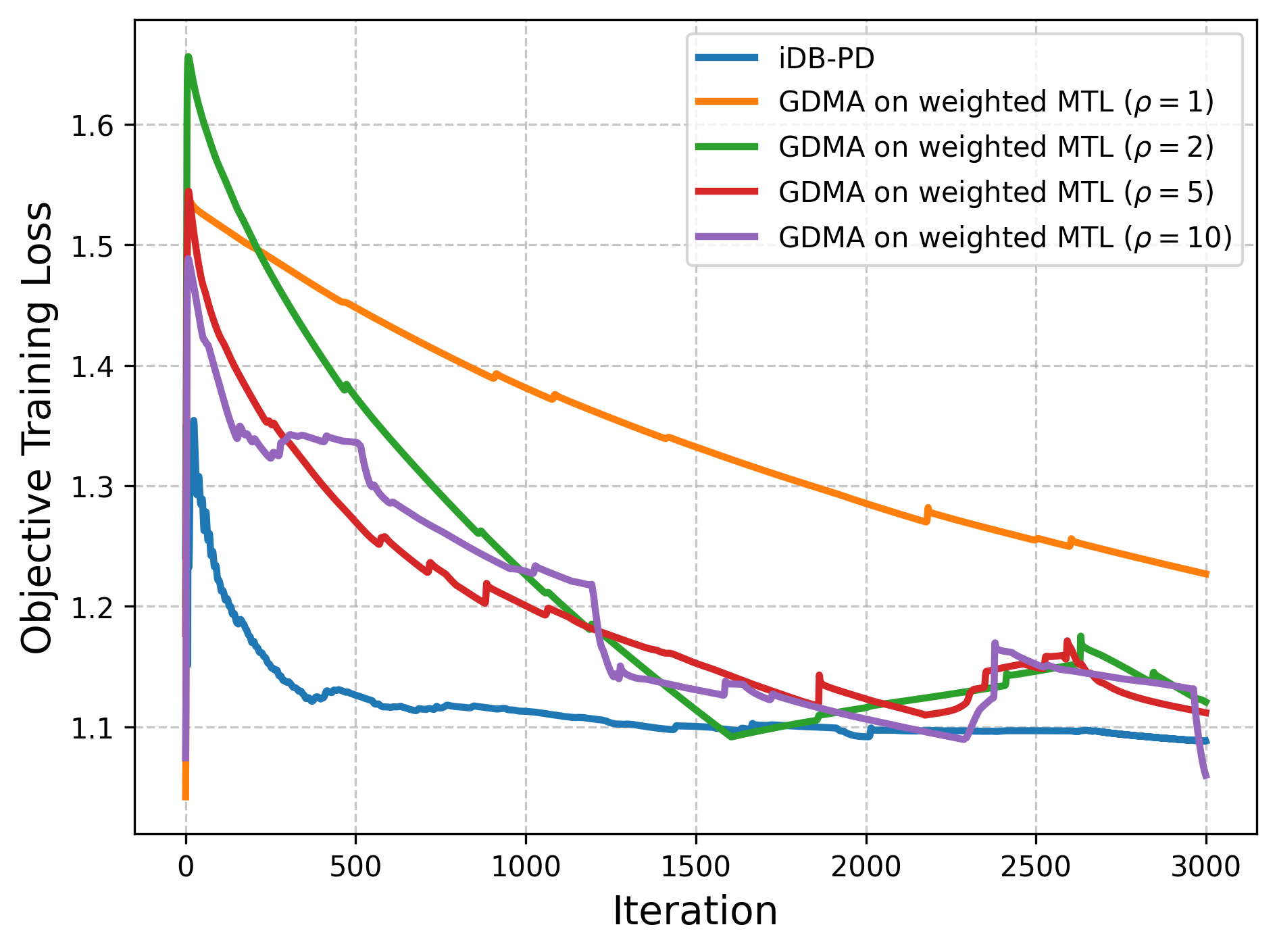}
  \end{minipage}\hfill
  \begin{minipage}[t]{0.3\textwidth}
    \centering
    \includegraphics[width=\linewidth]{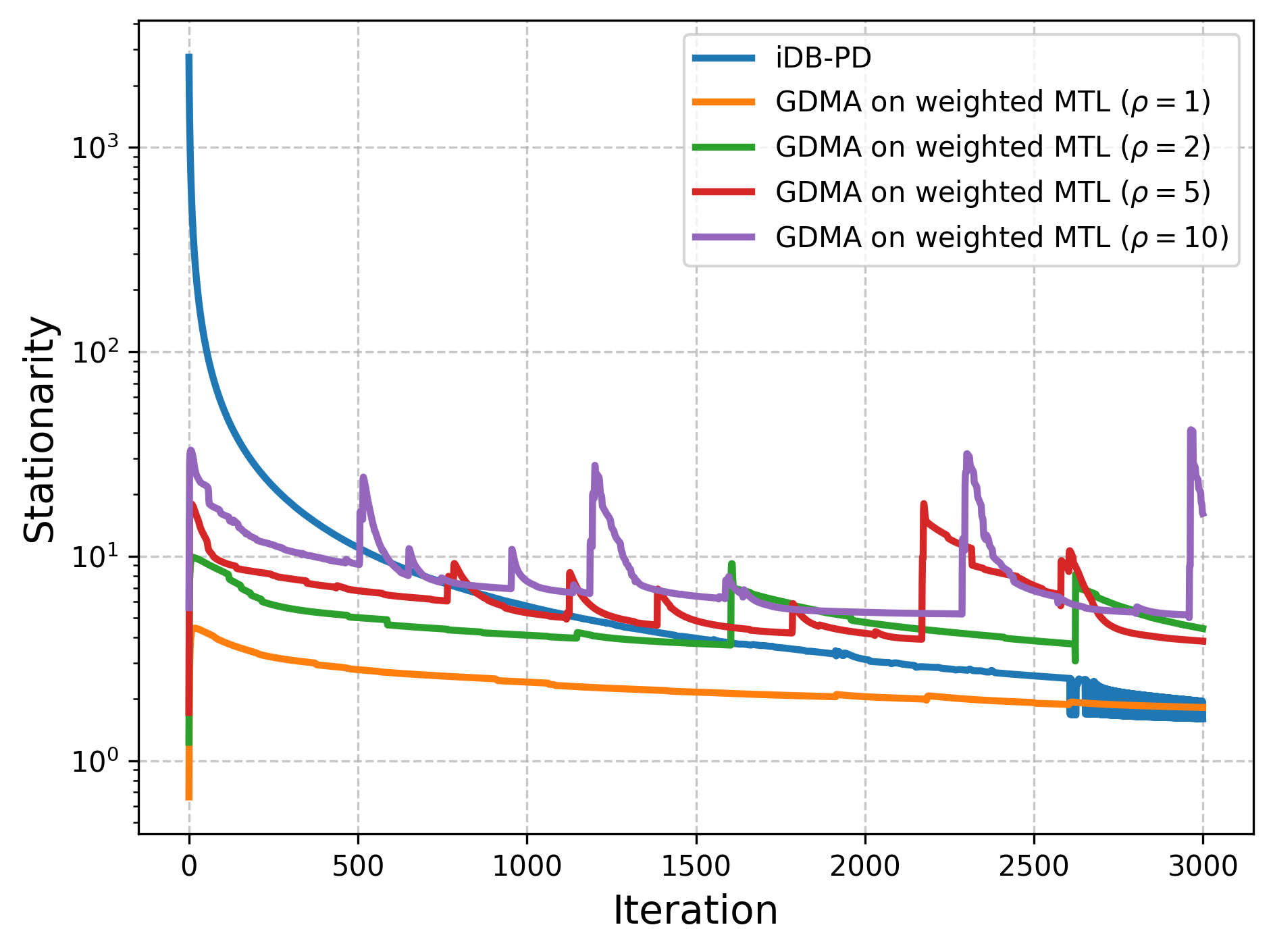}
  \end{minipage}\hfill
  \begin{minipage}[t]{0.3\textwidth}
    \centering
    \includegraphics[width=\linewidth]{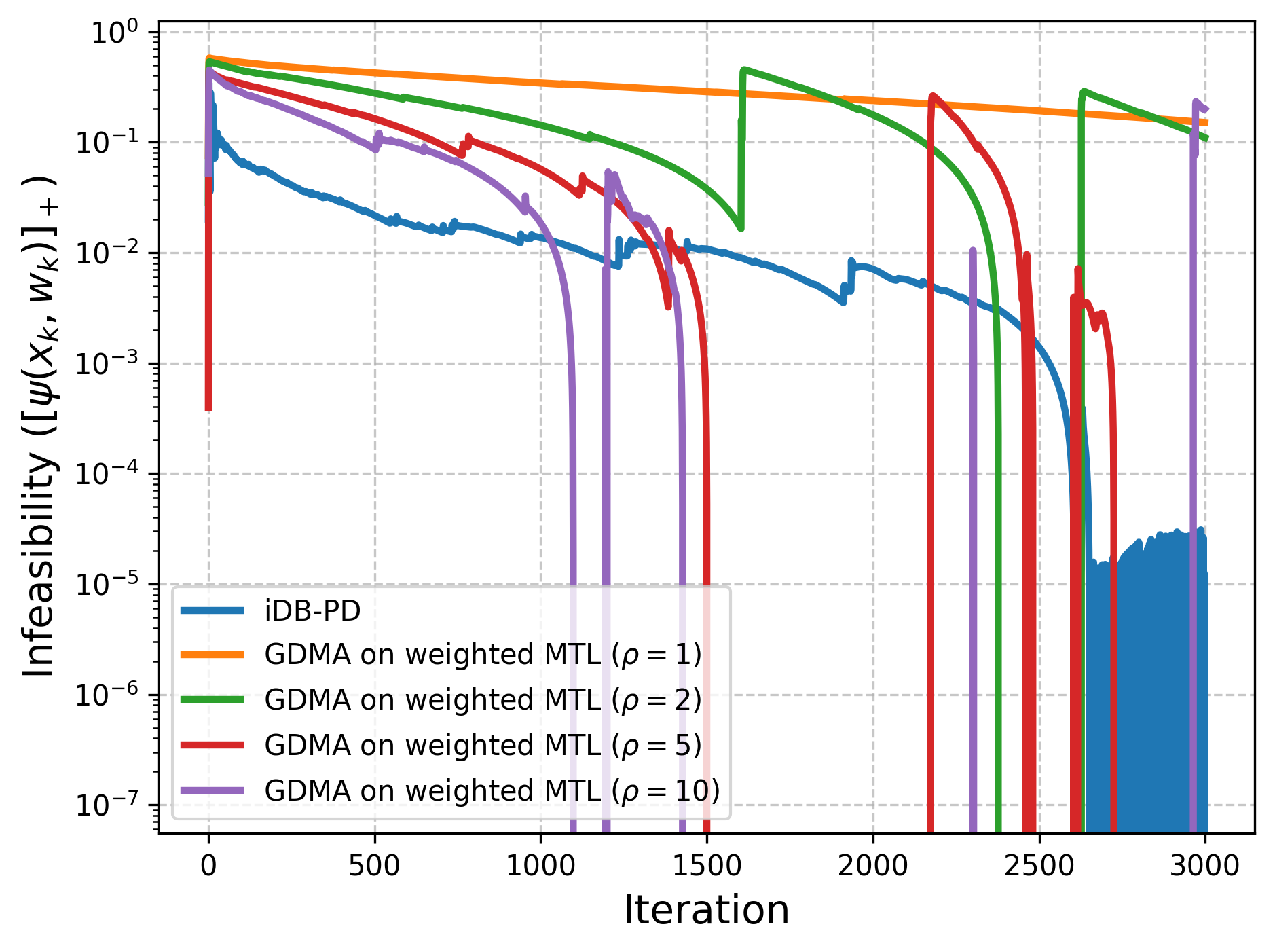}
  \end{minipage}

  \caption{iDB--PD vs GDMA on multi-MNIST (top row) and CHD49 (bottom row), evaluated in terms of stationarity, infeasibility, and objective loss.}
  \label{fig:idbpd-gdma}
\end{figure}
In Figure \ref{fig:idbpd-gdma}, we observe that iDB-PD consistently converges in both feasibility and stationarity, whereas GDMA with small $\rho$ achieves low stationarity, but fails to reduce infeasibility, and GDMA with large $\rho$ exhibits unstable behavior. These results highlight the difficulty of selecting appropriate loss weights ($\rho$) to balance the tasks and confirm the robustness of our semi-infinite constrained min-max formulation, where such a weight is dynamically adjusted by the algorithm.

\section*{Acknowledgment}
    This work is supported in part by the National Science Foundation under Grants ECCS-2515979 and 2231863.


\bibliographystyle{plain}
\bibliography{biblio}

\section*{Appendix}

\section{Technical Appendices and Supplementary Material}

\subsection{Analysis of the maximization variables}\label{sec:max}
Consider the following parametric maximization problem
\begin{equation}\label{eq:parametric-max}
    \tilde h^*(x)\triangleq \max_{u\in U}h(x,u),
\end{equation}
for a given $x\in \mathbb R^n$ where $h(\cdot,\cdot):\mathbb R^n\times \mathbb R^m\to \mathbb R$ is a continuously differentiable function, and $U\subseteq \mathbb R^m$ is a closed, convex set. We are interested in the conditions under which the value function $\tilde h^*(x)$ is Lipschitz differentiable and an approximate solution of the above problem can be found using (accelerated) gradient ascent method. Indeed, using the classical result in the optimization literature \cite{karimi2016linear,nesterov2018lectures,necoara2019linear,nouiehed2019solving}, it can be deduced that both these properties are satisfied when $h(x,\cdot)$ is strongly concave or satisfies PL inequality when $U=\mathbb R^m$. In the following, we will restate these results in a unified statement and later specify them for our proposed algorithm. In particular, we first state the linear convergence result under these conditions, and then state the differentiability of the value function.

\begin{proposition}[\cite{karimi2016linear,nesterov2018lectures,necoara2019linear}]\label{prop:linear-rate}
Consider problem \eqref{eq:parametric-max}, and assume that $h(\cdot,\cdot)$ is a continuously differentiable function, such that for any fixed $x$, $h(x,\cdot)$ is either strongly concave (or satisfies PL inequality (see Def. \ref{eq:lojasiewicz} with $\theta=\frac{1}{2}$) with $U=\mathbb R^m$).
Let $\{u_k\}_{k=0}^{T-1}\subset \mathbb R^m$ be a sequence generated by the (accelerated) gradient ascent method. Then, for any $x\in\mathbb R^n$, there exists $\delta\in (0,1)$ and $\Delta_1,\Delta_2>0$ such that  we have the following results for $T\geq 1$, 
\begin{align*}
&\|u_T-u^*(x)\|^2\leq \Delta_1\delta^T,\\
&h(x,u_T)\leq \tilde h^*(x) \leq h(x,u_T) +\Delta_2\delta^T,
\end{align*}
where $u^*(x)\triangleq \mathcal P_{U^*(x)}(u_T)$, and $U^*(x)\triangleq \argmax_{u\in U}h(x,u)$.
\end{proposition}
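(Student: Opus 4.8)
The plan is to prove Proposition~\ref{prop:linear-rate} by reducing both the strongly concave and the PL cases to a single template. First I would recall that if $h(x,\cdot)$ is $\eta$-strongly concave, then it satisfies the PL inequality with modulus $\eta$ and exponent $\theta=1/2$ on $U=\mathbb{R}^m$; more generally, on a closed convex set $U$, strong concavity implies the quadratic growth condition $\tilde h^*(x)-h(x,u)\geq \tfrac{\eta}{2}\,\mathrm{dist}(u,U^*(x))^2$ and a projected-gradient analogue of PL. So in either case we have both quadratic growth around the (possibly non-unique) solution set $U^*(x)$ and the corresponding error-bound/PL-type inequality. I would also record that $h(x,\cdot)$ has a Lipschitz gradient (Assumptions~\ref{assum:objective}(i), \ref{assum:constraint}(i) provide this for the two instantiations $h=\phi$ and $h=\psi$), which is the standing smoothness hypothesis needed to run gradient ascent.

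The core step is the linear-rate estimate. Running (projected, possibly accelerated) gradient ascent on the $L$-smooth, PL/quadratic-growth concave function $h(x,\cdot)$ yields, by the classical analyses in \cite{karimi2016linear,nesterov2018lectures,necoara2019linear}, a contraction of the function-value gap: $\tilde h^*(x)-h(x,u_k)\leq \delta^k\bigl(\tilde h^*(x)-h(x,u_0)\bigr)$ for some $\delta=\delta(x)\in(0,1)$ depending only on the modulus and on $L$ (for Nesterov acceleration one gets $\delta=1-\sqrt{\eta/L}$; for plain gradient ascent $\delta=1-\eta/L$). Setting $\Delta_2:=\tilde h^*(x)-h(x,u_0)\geq 0$ gives the second displayed inequality $h(x,u_T)\le \tilde h^*(x)\le h(x,u_T)+\Delta_2\delta^T$, where $h(x,u_T)\le\tilde h^*(x)$ is immediate from the definition of the max. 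For the first inequality, I combine the function-gap contraction with quadratic growth: $\tfrac{\eta}{2}\|u_T-u^*(x)\|^2\le \tfrac{\eta}{2}\,\mathrm{dist}(u_T,U^*(x))^2\le \tilde h^*(x)-h(x,u_T)\le \Delta_2\delta^T$, so the claim holds with $\Delta_1:=2\Delta_2/\eta$ and $u^*(x):=\mathcal{P}_{U^*(x)}(u_T)$, using that the distance to the set equals the norm of the difference with the projection.

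The main obstacle, and the only place requiring care, is the non-uniqueness of the maximizer in the PL regime: without strong concavity $U^*(x)$ may be a nontrivial set, so one cannot speak of "the" $u^*(x)$ but must work with $\mathrm{dist}(\cdot,U^*(x))$ and the projection, and one must make sure the error bound used is the set-valued ($\mathrm{dist}$-based) quadratic growth rather than the pointwise strongly convex one — this is exactly why the statement is phrased with $u^*(x)=\mathcal{P}_{U^*(x)}(u_T)$. A secondary, more cosmetic point is that $\delta,\Delta_1,\Delta_2$ are allowed to depend on $x$; since downstream we only ever apply this at the iterates $x_{k+1}$ of Algorithm~\ref{alg:sp_const}, and the smoothness constants are uniform while the initialization gap is controlled by the boundedness of the relevant iterates (Lemma~\ref{lem:bounded}), this $x$-dependence causes no difficulty, but I would flag it explicitly so the later use of summability of $\{\mathcal E_k^y,\mathcal E_k^w\}$ (which are of the form $\mathrm{const}\cdot\delta^{N_k}$ with $N_k,M_k$ growing like $\log(k)$) is transparent. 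Everything else is a direct citation of the standard convergence theory, so the proof is short.
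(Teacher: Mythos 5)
The paper offers no proof of this proposition; it is stated as a citation to \cite{karimi2016linear,nesterov2018lectures,necoara2019linear}, and your reconstruction is precisely the standard argument those references contain: linear contraction of the function-value gap under smoothness plus strong concavity/PL, followed by the PL-implies-quadratic-growth step to convert the gap bound into a squared-distance bound via the projection onto $U^*(x)$. Your argument is correct (the only nit is that the $1-\sqrt{\eta/L}$ rate for the accelerated variant under PL alone requires some care, but the proposition only asserts existence of some $\delta\in(0,1)$, which plain projected gradient ascent already delivers), and your flag about the $x$-dependence of the constants versus the uniform constants claimed in \eqref{eq:sol-y}--\eqref{eq:subopt-psi} is a legitimate point the paper glosses over.
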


\begin{proposition}[\cite{nouiehed2019solving} Lemma A.5]\label{lem:nouiehed}
Consider problem \eqref{eq:parametric-max}, and assume that $h(\cdot,\cdot)$ is a continuously differentiable function, such that for any fixed $x$, $h(x,\cdot)$ is $\eta_h$-strongly concave (or $c_h$-PL (see Def. \ref{def:lojineq} with $\theta=\frac{1}{2}$) with $U=\mathbb R^m$), it follows that 
\begin{align*}
\nabla \tilde h^*(x) = \nabla_x h(x, u^*) \quad \text{for any } u^* \in U^*(x).
\end{align*}
where $U^*(x)\triangleq \argmax_{u\in U}h(x,u)$. Moreover, $\tilde h^*(x)$ has $(L_{uu}^h +(L_{xu}^h)^2/\iota)$-Lipschitz gradient where $\iota=\eta_h$ (or $\iota=c_h^2$).
\end{proposition}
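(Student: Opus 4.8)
The plan is to establish the two assertions in turn — the Danskin-type identity $\nabla\tilde h^*(x)=\nabla_x h(x,u^*)$ and the Lipschitz bound on $\nabla\tilde h^*$ — and to reduce both to a single quantitative sensitivity estimate for the maximizer set $U^*(x)=\argmax_{u\in U}h(x,u)$ with respect to $x$. I would run the strongly concave case first, where $U^*(x)$ is a singleton and the argument is cleanest, and then describe the modifications needed for the PL case.

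\textbf{Strongly concave case.} Strong concavity of $h(x,\cdot)$ on the closed convex set $U$ makes $u^*(x)$ the unique solution of the variational inequality $\langle\nabla_u h(x,u^*(x)),\,u-u^*(x)\rangle\le 0$ for all $u\in U$. Writing this at $x_1$ and $x_2$, testing with $u=u^*(x_2)$ and $u=u^*(x_1)$ respectively, adding, inserting $\pm\nabla_u h(x_1,u^*(x_2))$, and combining $\eta_h$-strong concavity with the mixed Lipschitz bound $\|\nabla_u h(x_1,u)-\nabla_u h(x_2,u)\|\le L_{xu}^h\|x_1-x_2\|$, I get
\[
\eta_h\,\|u^*(x_1)-u^*(x_2)\|^2\le L_{xu}^h\,\|x_1-x_2\|\,\|u^*(x_1)-u^*(x_2)\|,
\]
so $u^*(\cdot)$ is $(L_{xu}^h/\eta_h)$-Lipschitz, in particular continuous. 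For the gradient identity I would sandwich the increment $\tilde h^*(x+d)-\tilde h^*(x)=h(x+d,u^*(x+d))-h(x,u^*(x))$ from above by $h(x+d,u^*(x+d))-h(x,u^*(x+d))$ (optimality of $u^*(x)$ at $x$) and from below by $h(x+d,u^*(x))-h(x,u^*(x))$ (optimality of $u^*(x+d)$ at $x+d$); a first-order expansion of both endpoints in $d$, using Lipschitz continuity of $\nabla_x h$ and of $u^*(\cdot)$, shows both equal $\langle\nabla_x h(x,u^*(x)),d\rangle+O(\|d\|^2)$, hence $\tilde h^*$ is differentiable with $\nabla\tilde h^*(x)=\nabla_x h(x,u^*(x))$. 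The Lipschitz modulus is then immediate: splitting $\nabla_x h(x_1,u^*(x_1))-\nabla_x h(x_2,u^*(x_2))$ through $\nabla_x h(x_2,u^*(x_1))$ and applying $L_{xx}^h$-Lipschitzness in the first argument together with the argmax bound yields $\|\nabla\tilde h^*(x_1)-\nabla\tilde h^*(x_2)\|\le\big(L_{xx}^h+(L_{xu}^h)^2/\eta_h\big)\|x_1-x_2\|$, matching the stated form with $\iota=\eta_h$.

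\textbf{PL case.} Here $U=\mathbb R^m$ and $h(x,\cdot)$ satisfies $c_h\,|\tilde h^*(x)-h(x,u)|^{1/2}\le\|\nabla_u h(x,u)\|$. I would first promote this, via the standard gradient-flow length estimate \cite{karimi2016linear}, to the quadratic-growth bound $\tilde h^*(x)-h(x,u)\ge\tfrac{c_h^2}{4}\,\dist(u,U^*(x))^2$. Substituting this for strong monotonicity, the same bookkeeping — comparing $h(x_1,\cdot)$ and $h(x_2,\cdot)$ at a maximizer $u_2^*\in U^*(x_2)$ and at its projection onto $U^*(x_1)$, and bounding the cross term by $L_{xu}^h\,\dist(u_2^*,U^*(x_1))\,\|x_1-x_2\|$ — gives $\dist(u_2^*,U^*(x_1))\le(4L_{xu}^h/c_h^2)\|x_1-x_2\|$. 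Granting that $\nabla_x h(x,\cdot)$ takes a single value on $U^*(x)$, the increment-sandwich and the modulus computation carry over verbatim, so $\tilde h^*$ is differentiable with $\nabla\tilde h^*(x)=\nabla_x h(x,u^*)$ for any $u^*\in U^*(x)$ and with Lipschitz constant $L_{xx}^h+4(L_{xu}^h)^2/c_h^2$, i.e.\ the stated bound with $\iota=c_h^2$ (absolute constants and the labelling of the Lipschitz constants of $\nabla h$ being absorbed into the notation).

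\textbf{Main obstacle.} The hard part is exactly the single-valuedness of $u^*\mapsto\nabla_x h(x,u^*)$ on $U^*(x)$ in the PL case: if PL were assumed only at the fixed $x$, the value function can genuinely have a kink there — for instance $h(x,u)=-\dist(u,C)^2+x^\top Mu$ with $C$ a segment is PL in $u$ at the hyperplane $(M^\top x)_1=0$, on which $U^*(x)$ is a segment along which $\nabla_x h$ varies affinely, and $\tilde h^*$ is nonsmooth there. What must rescue the statement is that PL is imposed for \emph{every} $x$; I would exploit this by combining the quadratic-growth bound at $x$ with the ones at $x\pm td$ to control how far a maximizer of each perturbed problem lies from $U^*(x)$, obtain the Danskin directional derivative $(\tilde h^*)'(x;d)=\max_{u\in U^*(x)}\langle\nabla_x h(x,u),d\rangle$ by the difference-quotient sandwich, and then argue that the uniform PL assumption forces $(\tilde h^*)'(x;d)+(\tilde h^*)'(x;-d)=0$; since the left-hand side is sublinear in $d$, this oddness forces $\{\nabla_x h(x,u):u\in U^*(x)\}$ to be a singleton. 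Turning this last step into a fully rigorous argument — rather than merely a plausible one — is the only nonroutine piece; everything else is manipulation of Lipschitz constants.
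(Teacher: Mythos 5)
The paper itself contains no proof of this proposition: it is imported verbatim (statement and constants) from Lemma~A.5 of Nouiehed et al.\ \cite{nouiehed2019solving}, so there is no internal argument to compare yours against; what follows is an assessment of your proposal on its own terms. Your strongly concave half is correct and standard: the variational-inequality argument gives $\|u^*(x_1)-u^*(x_2)\|\le (L_{xu}^h/\eta_h)\|x_1-x_2\|$, the two-sided sandwich of the increment yields the Danskin identity, and the modulus computation gives $L_{xx}^h+(L_{xu}^h)^2/\eta_h$. (Incidentally, your constant carries $L_{xx}^h$ where the statement prints $L_{uu}^h$; yours is the natural one, and the discrepancy is inherited from the cited lemma's notation rather than a flaw in your argument.) Your PL-case sensitivity estimate $\mathrm{dist}(u_2^*,U^*(x_1))\le (4L_{xu}^h/c_h^2)\|x_1-x_2\|$ via quadratic growth is also correct.

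The genuine gap is the one you flag yourself: single-valuedness of $u^*\mapsto\nabla_x h(x,u^*)$ on $U^*(x)$ in the PL case, which is the entire content of the lemma beyond routine bookkeeping, is asserted but not proved, and the route you sketch does not close. By the Danskin formula, oddness of $d\mapsto(\tilde h^*)'(x;d)$ is \emph{equivalent} to the set $\{\nabla_x h(x,u):u\in U^*(x)\}$ having a singleton convex hull, i.e.\ to the differentiability you are trying to establish; so "argue that uniform PL forces $(\tilde h^*)'(x;d)+(\tilde h^*)'(x;-d)=0$" is a restatement of the claim, not a mechanism. The quantitative facts you do derive are not enough to extract it: quadratic growth and smoothness give a lower first-order bound $\tilde h^*(x')-\tilde h^*(x)\ge\langle\nabla_x h(x,u^*),x'-x\rangle-C\|x'-x\|^2$ valid for \emph{every} $u^*\in U^*(x)$, and an upper bound of the same form with \emph{some} $x'$-dependent element of $U^*(x)$; combining these at $x\pm td$ leaves a cross term $t\langle v_+-v_-,d\rangle$ with $v_\pm$ possibly different elements of the gradient set, so the support function is not forced to be odd and no contradiction with non-singleton gradient sets is obtained. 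To complete the proof you must genuinely exploit the PL condition holding at all nearby $x$ to show that one and the same vector serves as a two-sided first-order approximation of $\tilde h^*$ at $x$ — which is precisely what the cited Lemma~A.5 supplies — or else, as the paper does, simply invoke that lemma rather than reprove it.
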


Now, we apply the above propositions to the objective and constraint functions in problem \eqref{main} to derive the error of estimating the maximization components according to the updates of Algorithm \ref{alg:sp_const}, which will be used in the analysis. Recall that $f(x)= \max_{y\in Y}\phi(x,y)$ and $g(x)=\max_{w\in W}\psi(x,w)$. Based on Proposition \ref{lem:nouiehed} and Assumptions \ref{assum:objective} and \ref{assum:constraint}, we have the following properties: 
\begin{enumerate}
    \item \label{Prop1:} $f$ is continuously differentiable and has a Lipschitz gradient with constant $L_f\triangleq L_{yy}^\phi +(L_{xy}^\phi)^2/\iota_f$,
    \item \label{Prop2:} $g$ is continuously differentiable and has a Lipschitz gradient with constant $L_g\triangleq L_{ww}^\psi +(L_{xw}^\psi)^2/\iota_g$,
\end{enumerate}
where $\iota_f=\eta_\psi$ when $\psi(x,\cdot)$ is $\eta_\psi$-strongly concave or $\iota_f=c_\psi^2$ when $\psi(x,\cdot)$ is $c_\psi$-PL ($\iota_g$ is defined similarly).

Moreover, based on Proposition \ref{prop:linear-rate}, there exist uniform constants $\Delta^y_1,\Delta^w_1,\Delta^w_2\in (0,+\infty)$ and $\delta_y,\delta_w\in (0,1)$, such that for any $k\geq 0$, 
\begin{align}
&\|y_k-y^*(x_k)\|^2\leq \Delta^y_1\delta_y^{N_k} ,\label{eq:sol-y}\\
&\|w_k-w^*(x_k)\|^2\leq \Delta^w_1\delta_w^{M_k},\label{eq:sol-w}\\
&\psi(x_k,w_k)\leq g(x_k) \leq \psi(x_k,w_k) +\Delta^w_2\delta_w^{M_k}\label{eq:subopt-psi},
\end{align}
where $N_k,M_k$ denote the number of (accelerated) gradient ascent steps to maximize the functions $\phi(x,\cdot)$ and $\psi(x,\cdot)$, respectively.  

\subsection{Required Lemmas}
This section states two important lemmas regarding the proposed method and the implicit constraint function. First, we show some bounds on the modified dual multiplier $\lambda_k$ corresponding to the subproblem \eqref{QP-subproblem} based on the update of Algorithm \ref{alg:sp_const}. Next, we establish local Lipschitz continuity of the infeasibility residual function $p(x)\triangleq [g(x)]_+^2$. Later, we show that by carefully selecting the stepsize, this local constant can be upper bounded by a global one. 

\begin{lemma}\label{lem:dual-bound}
Suppose {Assumptions \ref{assum:objective} and \ref{assum:constraint} hold}, and let $\{x_k,\lambda_k\}_{k\geq 0}$ be the sequence generated by Algorithm \ref{alg:sp_const} such that $\{\alpha_k\}_{k\geq 0}\subset \mathbb R_+$ is non-increasing sequence. Then, for any $k\geq 0$ we have that $\lambda_k\rho(x_k,w_k)\leq \|\nabla_x\phi(x_k,y_k)\|+\alpha_k $. Furthermore, {if Assumption \ref{assum:reg} hold}, then for any $k\geq 0$, $\lambda_k [g(x_k)]_+\leq C[g(x_k)]_+^{2-2\theta}+ C\Delta_2^w\delta_w^{M_k}[\psi(x_k,w_k)]_+^{1-2\theta}$ where $C\triangleq \mu (C_\phi+\alpha_0)$. 
\end{lemma}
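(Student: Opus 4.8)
The plan is to split the argument into the two claimed bounds, handling the easy one first. For the first bound $\lambda_k\rho(x_k,w_k)\le \|\nabla_x\phi(x_k,y_k)\|+\alpha_k$, I would consider the two branches of the $\lambda_k$-update separately. If $\zeta(x_k,w_k)=0$, then $\lambda_k=0$ and the inequality is trivial. If $\zeta(x_k,w_k)>0$, then $\|\nabla_x\psi(x_k,w_k)\|=\rho(x_k,w_k)>0$ (since $[\psi(x_k,w_k)]_+$ would have to be nonzero for $\zeta>0$, forcing $\rho>0$ as well), so $\lambda_k$ is given by \eqref{update:lambda}. Multiplying through by $\rho(x_k,w_k)=\|\nabla_x\psi(x_k,w_k)\|$ gives
\[
\lambda_k\rho(x_k,w_k)=\frac{1}{\|\nabla_x\psi(x_k,w_k)\|}\big[-\nabla_x\psi(x_k,w_k)^\top\nabla_x\phi(x_k,y_k)+\alpha_k\rho(x_k,w_k)\big]_+ .
\]
Using $[\cdot]_+\ge 0$, dropping the $+$ after bounding by absolute value, Cauchy–Schwarz on the first term, and $\rho(x_k,w_k)=\|\nabla_x\psi(x_k,w_k)\|$, the right-hand side is at most $\|\nabla_x\phi(x_k,y_k)\|+\alpha_k$. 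This settles the first claim; note Assumption \ref{assum:reg} is not needed here.

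For the second bound I would start from the just-proved inequality and multiply both sides by $[g(x_k)]_+$, which is legitimate since $[g(x_k)]_+\ge 0$; this yields $\lambda_k[g(x_k)]_+\,\rho(x_k,w_k)\le (\|\nabla_x\phi(x_k,y_k)\|+\alpha_k)[g(x_k)]_+$. Since $\{\alpha_k\}$ is non-increasing, $\alpha_k\le\alpha_0$, and by Assumption \ref{assum:objective}-(ii), $\|\nabla_x\phi(x_k,y_k)\|\le C_\phi$; thus the right-hand side is at most $(C_\phi+\alpha_0)[g(x_k)]_+$. The key difficulty — and the step I expect to be the main obstacle — is to convert the factor $\rho(x_k,w_k)=\|\nabla_x\psi(x_k,w_k)\|$ on the left into something comparable to a power of $[g(x_k)]_+$, so that we can cancel/divide and isolate $\lambda_k[g(x_k)]_+$. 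Here I would invoke Assumption \ref{assum:reg}: inequality \eqref{eq:reg} gives $[\psi(x_k,w_k)]_+^{2\theta}\le\mu\|\nabla_x\psi(x_k,w_k)[\psi(x_k,w_k)]_+\| = \mu[\psi(x_k,w_k)]_+\rho(x_k,w_k)$, hence whenever $[\psi(x_k,w_k)]_+>0$,
\[
[\psi(x_k,w_k)]_+\,\rho(x_k,w_k)\ \ge\ \frac{1}{\mu}[\psi(x_k,w_k)]_+^{2\theta}.
\]
(When $[\psi(x_k,w_k)]_+=0$ we have $\zeta(x_k,w_k)=0$, so $\lambda_k=0$ and the bound holds trivially.)

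It remains to relate $[g(x_k)]_+$ and $[\psi(x_k,w_k)]_+$, which is exactly the role of the suboptimality estimate \eqref{eq:subopt-psi}: $[\psi(x_k,w_k)]_+\le[g(x_k)]_+\le[\psi(x_k,w_k)]_+ +\Delta_2^w\delta_w^{M_k}$ (the left inequality in $[\cdot]_+$ form follows from $\psi(x_k,w_k)\le g(x_k)$ and monotonicity of $[\cdot]_+$). Combining: from $\lambda_k[\psi(x_k,w_k)]_+\rho(x_k,w_k)\le\lambda_k[g(x_k)]_+\rho(x_k,w_k)\le(C_\phi+\alpha_0)[g(x_k)]_+$ and the lower bound on $[\psi(x_k,w_k)]_+\rho(x_k,w_k)$, I would divide by $[\psi(x_k,w_k)]_+^{2\theta}/\mu$ to get
\[
\lambda_k\ \le\ \mu(C_\phi+\alpha_0)\,\frac{[g(x_k)]_+}{[\psi(x_k,w_k)]_+^{2\theta}} ,
\]
and then multiply by $[g(x_k)]_+$ and substitute $[g(x_k)]_+\le[\psi(x_k,w_k)]_+ +\Delta_2^w\delta_w^{M_k}$ in the numerator:
\[
\lambda_k[g(x_k)]_+\ \le\ C\,\frac{[g(x_k)]_+^2}{[\psi(x_k,w_k)]_+^{2\theta}}\ \le\ C\,\frac{\big([\psi(x_k,w_k)]_+ +\Delta_2^w\delta_w^{M_k}\big)^2}{[\psi(x_k,w_k)]_+^{2\theta}},
\]
with $C=\mu(C_\phi+\alpha_0)$. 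Expanding the square would give three terms; however, to land exactly on the stated form $C[g(x_k)]_+^{2-2\theta}+C\Delta_2^w\delta_w^{M_k}[\psi(x_k,w_k)]_+^{1-2\theta}$ I would instead split more carefully: write $[g(x_k)]_+^2=[g(x_k)]_+\cdot[g(x_k)]_+$, use the upper estimate on only one of the two factors, i.e. $[g(x_k)]_+^2\le[g(x_k)]_+\big([\psi(x_k,w_k)]_+ +\Delta_2^w\delta_w^{M_k}\big)$, divide by $[\psi(x_k,w_k)]_+^{2\theta}$, and on the first resulting term use $[\psi(x_k,w_k)]_+\le[g(x_k)]_+$ so that $[g(x_k)]_+[\psi(x_k,w_k)]_+^{1-2\theta}\le[g(x_k)]_+^{2-2\theta}$, while on the second term use the same bound $[g(x_k)]_+[\psi(x_k,w_k)]_+^{-2\theta}\le$ — wait, this needs $1-2\theta\ge 0$ or a separate treatment; I would handle the case $\theta\le 1/2$ directly as above and, for $\theta>1/2$, absorb the negative exponent using once more $[\psi(x_k,w_k)]_+\le[g(x_k)]_+$ on the numerator factor, yielding the $[\psi(x_k,w_k)]_+^{1-2\theta}$ term as stated. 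The bookkeeping of exponents across the ranges of $\theta$ is the only genuinely delicate part; everything else is Cauchy–Schwarz, monotonicity of $[\cdot]_+$, and substitution.
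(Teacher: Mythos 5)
The first claim is proved exactly as in the paper (case split on $\zeta(x_k,w_k)$, then Cauchy--Schwarz and the triangle inequality), and your observation that Assumption \ref{assum:reg} is not needed there is correct.

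The second claim, however, has a genuine gap. By multiplying $\lambda_k\rho(x_k,w_k)\le C_\phi+\alpha_0$ by $[g(x_k)]_+$ \emph{before} invoking the {\L}ojasiewicz inequality, and then lower-bounding $[g(x_k)]_+\rho\ge[\psi(x_k,w_k)]_+\rho\ge\frac{1}{\mu}[\psi(x_k,w_k)]_+^{2\theta}$, you arrive at $\lambda_k\le C\,[g(x_k)]_+/[\psi(x_k,w_k)]_+^{2\theta}$, which carries an extra factor of $[g(x_k)]_+/[\psi(x_k,w_k)]_+\ge 1$ compared with what is actually needed. After multiplying by $[g(x_k)]_+$ again you face $C[g(x_k)]_+^2/[\psi(x_k,w_k)]_+^{2\theta}$, and no application of $[\psi(x_k,w_k)]_+\le[g(x_k)]_+\le[\psi(x_k,w_k)]_++\Delta_2^w\delta_w^{M_k}$ closes the gap: the cross term you obtain is $C\Delta_2^w\delta_w^{M_k}[g(x_k)]_+[\psi(x_k,w_k)]_+^{-2\theta}$, and bounding it by the stated $C\Delta_2^w\delta_w^{M_k}[\psi(x_k,w_k)]_+^{1-2\theta}$ would require $[g(x_k)]_+\le[\psi(x_k,w_k)]_+$, which is the wrong direction (your proposed patch of using $[\psi]_+\le[g]_+$ ``on the numerator'' only enlarges the expression). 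Iterating the substitution instead produces an uncontrolled term of order $(\Delta_2^w\delta_w^{M_k})^2[\psi(x_k,w_k)]_+^{-2\theta}$, which blows up as $[\psi(x_k,w_k)]_+\to 0$. The fix is to not introduce the extra factor at all: from \eqref{eq:reg} one gets $1/\|\nabla_x\psi(x_k,w_k)\|\le\mu[\psi(x_k,w_k)]_+^{1-2\theta}$ directly (divide the {\L}ojasiewicz inequality by $[\psi(x_k,w_k)]_+^{1}$, legitimate since $\zeta(x_k,w_k)>0$), hence $\lambda_k\le C[\psi(x_k,w_k)]_+^{1-2\theta}$. Multiplying by $[g(x_k)]_+$ \emph{once} and applying $[g(x_k)]_+\le[\psi(x_k,w_k)]_++\Delta_2^w\delta_w^{M_k}$ yields $C[\psi(x_k,w_k)]_+^{2-2\theta}+C\Delta_2^w\delta_w^{M_k}[\psi(x_k,w_k)]_+^{1-2\theta}$, and only the first term (with exponent $2-2\theta>0$) is then enlarged via $[\psi(x_k,w_k)]_+\le[g(x_k)]_+$ --- so no case split on $\theta\lessgtr 1/2$ is needed. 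This is the route the paper takes.
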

\begin{proof}
Recall that $\rho(x_k,w_k)=\|\nabla_x\psi(x_k,w_k)\|$ and $\zeta(x_k,w_k) = [\psi(x_k,w_k)]_+\|\nabla_x\psi(x_k,w_k)\|$. 
Note that if $\lambda_k=0$, the bound holds trivially. Now suppose, $\zeta(x_k,w_k)> 0$, then using the update of $\lambda_k$ we have that
\begin{align*}
\lambda_k\rho(x_k,w_k)= \frac{1}{\|\nabla_x\psi(x_k,w_k)\|}\left[-\nabla_x\psi(x_k,w_k)^\top \nabla_x\phi(x_k,y_k)+\alpha_k \rho(x_k,w_k) \right]_+.
\end{align*}
Taking the absolute value from both sides, using the fact that $|\max\{a,b\}|\leq |a|+|b|$ for any $a,b\in\mathbb R$, followed by the triangle and Cauchy-Schwarz inequalities, we conclude that $\lambda_k\|\nabla_x\psi(x_k,y_k)\|\leq  \|\nabla_x\phi(x_k,y_k)\|+\alpha_k$.  

Similarly, from the definition of $\lambda_k$ and {Assumption \ref{assum:reg} we conclude that} 
\begin{align*}
    \lambda_k [g(x_k)]_+&\leq \frac{[g(x_k)]_+}{\|\nabla_x\psi(x_k,w_k)\|}(\|\nabla_x\phi(x_k,y_k)\|+\alpha_k)\\
    &\leq \mu [g(x_k)]_+[\psi(x_k,w_k)]_+^{1-2\theta}(\|\nabla_x\phi(x_k,y_k)\|+\alpha_k)\\
    &\leq \mu \left([\psi(x_k,w_k)]_+^{2-2\theta}+ \Delta_2^w\delta_w^{M_k}[\psi(x_k,w_k)]_+^{1-2\theta}\right)(\|\nabla_x\phi(x_k,y_k)\|+\alpha_0)\\
    &\leq \mu \left([g(x_k)]_+^{2-2\theta}+ \Delta_2^w\delta_w^{M_k}[\psi(x_k,w_k)]_+^{1-2\theta}\right)(\|\nabla_x\phi(x_k,y_k)\|+\alpha_0)
\end{align*}
where in the penultimate inequality we used the second inequality in \eqref{eq:subopt-psi} and that $\alpha_k$ is a non-increasing sequence. The last inequality above follows from the first inequality in \eqref{eq:subopt-psi}. Finally, the result follows from the boundedness of $\nabla_x\phi(\cdot,\cdot)$ -- see Assumption \ref{assum:objective}.
\end{proof}
\begin{lemma}\label{lem:gplus}
Suppose Assumption \ref{assum:constraint} holds. Let $g(x)\triangleq \max_{w\in W}\psi(x,w)$ and the infeasibility residual by $p(x)=[g(x)]_+^2$. 
Then $p(\cdot)$ is a continuously differentiable function and $\nabla p(x)$ is locally Lipschitz continuous with constant $L_p(x)\triangleq 2C_\psi^2+L_g^2+[g(x)]_+^2$.
\end{lemma}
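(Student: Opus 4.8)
The plan is to establish the claim in two stages: first prove that $p(\cdot)$ is continuously differentiable with $\nabla p(x) = 2[g(x)]_+ \nabla g(x)$, and then bound the local Lipschitz modulus of this gradient. For the differentiability, recall from Property~\ref{Prop2:} (which follows from Proposition~\ref{lem:nouiehed} under Assumption~\ref{assum:constraint}) that $g(x) = \max_{w\in W}\psi(x,w)$ is itself continuously differentiable with $\nabla g(x) = \nabla_x\psi(x,w^*)$ for any $w^* \in W^*(x)$, and with $L_g$-Lipschitz gradient. The function $t\mapsto [t]_+^2 = (\max\{t,0\})^2$ is continuously differentiable on $\mathbb{R}$ with derivative $2[t]_+$ (the only delicate point is at $t=0$, where both one-sided derivatives vanish). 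Composing, $p = ([\cdot]_+^2)\circ g$ is continuously differentiable by the chain rule, and $\nabla p(x) = 2[g(x)]_+\nabla g(x)$.

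Next I would bound the local Lipschitz constant of $\nabla p$. Fix $x,\bar x$ in a neighborhood and write
\[
\nabla p(x) - \nabla p(\bar x) = 2[g(x)]_+\bigl(\nabla g(x) - \nabla g(\bar x)\bigr) + 2\bigl([g(x)]_+ - [g(\bar x)]_+\bigr)\nabla g(\bar x).
\]
For the first term, use $\|\nabla g(x) - \nabla g(\bar x)\| \le L_g\|x-\bar x\|$ and bound the scalar factor $[g(x)]_+$ locally by (roughly) $[g(\bar x)]_+$ plus a term of order $\|x-\bar x\|$, which near $\bar x$ contributes the $[g(x)]_+^2$-type term in $L_p(x)$. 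For the second term, the map $t\mapsto [t]_+$ is $1$-Lipschitz, so $\bigl|[g(x)]_+ - [g(\bar x)]_+\bigr| \le |g(x)-g(\bar x)| \le C_\psi\|x-\bar x\|$ using the bound $\|\nabla g\| = \|\nabla_x\psi(\cdot,w^*)\| \le C_\psi$ from Assumption~\ref{assum:constraint}-(ii); combined with $\|\nabla g(\bar x)\| \le C_\psi$ this yields the $2C_\psi^2$ contribution. Collecting the pieces and taking $\bar x = x$ in the local estimate gives the stated modulus $L_p(x) = 2C_\psi^2 + L_g^2 + [g(x)]_+^2$.

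The main obstacle I anticipate is keeping the estimate genuinely \emph{local} and getting the exact constants in $L_p(x)$ to match — in particular isolating the $[g(x)]_+^2$ term cleanly. This requires care in how one bounds the product $[g(x)]_+\cdot L_g$: writing $[g(x)]_+ \le [g(\bar x)]_+ + C_\psi\|x-\bar x\|$ and then using the AM–GM-type split $2 L_g [g(x)]_+ \le L_g^2 + [g(x)]_+^2$ (up to evaluating at $\bar x=x$ in the limit) is presumably what produces the $L_g^2 + [g(x)]_+^2$ combination. One must also double-check the edge behavior of $[\cdot]_+^2$ at $g(x)=0$ so the chain rule is legitimately applicable there; this is where continuous differentiability (rather than mere Lipschitzness) of the outer function is used. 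The rest is routine triangle-inequality bookkeeping.
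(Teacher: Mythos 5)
Your proposal is correct and follows essentially the same route as the paper: the identical add-and-subtract decomposition of $\nabla p(x)-\nabla p(\bar x)$, the $1$-Lipschitzness of $[\cdot]_+$ together with the $C_\psi$ bound on $\nabla g$ for the second term, and the Young/AM--GM split $2L_g[g(x)]_+\le L_g^2+[g(x)]_+^2$ for the first. The only (harmless) extra step is your bound $[g(x)]_+\le[g(\bar x)]_+ + C_\psi\|x-\bar x\|$; since the constant $L_p(x)$ is allowed to depend on the base point, the paper simply keeps the factor $[g(x)]_+$ as is and no limiting argument is needed.
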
  
\begin{proof}
Differentiability of $p(\cdot)$ follows from differentiability of $g$ as established in \autoref{Prop2:} and its gradient can be calculated by the chain rule as $\nabla p(x)=2\nabla g(x) [g(x)]_+$.  
Therefore, we have that
\begin{align*}
\|\nabla p(x) - \nabla p(y)\| &= \|2[g(x)]_+ \nabla g(x) - 2[g(y)]_+ \nabla g(y)\|\\
&=\|2[g(x)]_+ (\nabla g(x) - \nabla g(y)) + 2\nabla g(y)([g(x)]_+ - [g(y)]_+) \| \\
&\leq 2\|\nabla g(x) - \nabla g(y)\| \|[g(x)]_+\| + 2\|\nabla g(y)\|\|[g(x)]_+ - [g(y)]_+\|,
\end{align*}
where in the second equality we added and subtracted \( 2[g(x)]_+ \nabla g(y) \). Note that based on Assumption \ref{assum:constraint}-(ii), we have that $\nabla g(x)=\nabla_x\psi(x,w^*(x))$ is bounded by $C_\psi$, hence, $g$ is $C_\psi$-Lipschitz continuous. Therefore, 
\begin{align*}
    \|\nabla p(x) - \nabla p(y)\| &\leq \left(2L_g[g(x)]_++2C_\psi^2\right)\|x-y\|.
\end{align*}
From Young's inequality, we can bound $2L_g[g(x)]_+\leq L_g^2+[g(x)]_+^2$. 
Therefore, the following holds
\[
\|\nabla p(x) - \nabla p(y)\|\leq (2C_\psi^2+L_g^2+[g(x)]_+^2)\|x-y\|.
\]
\end{proof}

\subsection{Proof of one-step analysis}\label{sec:one-step}
In this section, we prove the one-step analysis for the objective and constraints.

\begin{lemma}\label{lem:one-step}
Suppose Assumptions \ref{assum:objective}, \ref{assum:constraint}, and \ref{assum:reg} hold. Let $\{x_k,\lambda_k\}_{k\geq 0}$ be the sequence generated by Algorithm \ref{alg:sp_const} such that $\{\alpha_k\}_k$ is a non-increasing sequence and $\gamma_k \leq (L_f+L_{xy}^\phi)^{-1}$. Then, for any $k\geq 0$
\begin{align}
\textbf{(I)}\quad & \frac{\gamma_k }{2}\|d_k\|^2\leq f(x_k)-f(x_{k+1})+ \gamma_k \alpha_k  (C_\phi+\alpha_k)+\frac{L_{xy}^{\phi}}{2}\Delta_1^y\delta_y^{N_k},\label{eq:one-step-stationary}\\
\textbf{(II)}\quad &\gamma_k\alpha_k\zeta(x_k,w_k)\leq \frac{p(x_k)}{2}-\frac{p(x_{k+1})}{2}+ \gamma_k\Delta_1^w\delta_w^{M_k}C_\psi (2C_\phi+\alpha_0)\nonumber\\
&\qquad \qquad +\frac{L_{xw}^{\psi}}{2}p(x_k)\Delta_1^w\delta_w^{M_k}+\frac{\gamma_k^2(L_p(x_k)+2L_{xw}^{\psi})}{4}\| d_k\|^2.\label{eq:one-step-feasibility}
\end{align}
\end{lemma}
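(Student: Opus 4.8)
\textbf{Proof plan for Lemma~\ref{lem:one-step}.}

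The plan is to establish both inequalities via a descent argument along the update $x_{k+1}=x_k+\gamma_k d_k$, using the respective smoothness properties of $f$ and of the residual $p$. For part \textbf{(I)}, I would start from the $L_f$-Lipschitz-gradient descent inequality $f(x_{k+1})\le f(x_k)+\gamma_k\langle\nabla f(x_k),d_k\rangle+\tfrac{L_f}{2}\gamma_k^2\|d_k\|^2$. The key is to relate $\nabla f(x_k)=\nabla_x\phi(x_k,y^*(x_k))$ (Proposition~\ref{lem:nouiehed}) to the direction $d_k=-\nabla_x\phi(x_k,y_k)-\lambda_k\nabla_x\psi(x_k,w_k)$, which uses the \emph{inexact} gradient $\nabla_x\phi(x_k,y_k)$. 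Writing $\langle\nabla f(x_k),d_k\rangle=\langle\nabla_x\phi(x_k,y_k),d_k\rangle+\langle\nabla f(x_k)-\nabla_x\phi(x_k,y_k),d_k\rangle$, the first piece equals $-\|d_k\|^2-\lambda_k\langle\nabla_x\psi(x_k,w_k),\nabla_x\phi(x_k,y_k)\rangle$ plus the $\lambda_k$-term; more cleanly, one can write $\langle\nabla_x\phi(x_k,y_k),d_k\rangle=-\|d_k+\nabla_x\phi(x_k,y_k)\|^2/\!\ldots$—actually the cleanest route is $\langle\nabla_x\phi(x_k,y_k),d_k\rangle\le -\|d_k\|^2+\lambda_k\|\nabla_x\psi(x_k,w_k)\|\|d_k\|$ is not quite right either; instead use $d_k+\nabla_x\phi(x_k,y_k)=-\lambda_k\nabla_x\psi(x_k,w_k)$, so $\langle d_k,d_k+\nabla_x\phi(x_k,y_k)\rangle=-\lambda_k\langle d_k,\nabla_x\psi(x_k,w_k)\rangle$, giving $\|d_k\|^2+\langle d_k,\nabla_x\phi(x_k,y_k)\rangle=-\lambda_k\langle d_k,\nabla_x\psi(x_k,w_k)\rangle\le\lambda_k\rho(x_k,w_k)\|d_k\|$ when the QP constraint is active, and the constraint $\nabla_x\psi(x_k,w_k)^\top d_k\le-\alpha_k\rho(x_k,w_k)\le 0$ actually makes this term nonpositive. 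So $\langle\nabla_x\phi(x_k,y_k),d_k\rangle\le-\|d_k\|^2$. The error term is bounded via Cauchy--Schwarz and $L_{xy}^\phi$-Lipschitzness: $\|\nabla f(x_k)-\nabla_x\phi(x_k,y_k)\|\le L_{xy}^\phi\|y^*(x_k)-y_k\|$, then Young's inequality splits $L_{xy}^\phi\|y^*(x_k)-y_k\|\|d_k\|\le\tfrac{\gamma_k L_{xy}^\phi}{2}\|d_k\|^2+\tfrac{L_{xy}^\phi}{2\gamma_k}\|y_k-y^*(x_k)\|^2$; but to match the stated bound I would instead absorb $\|d_k\|$ differently—using $\langle\nabla f-\nabla_x\phi,d_k\rangle$ together with boundedness of $\nabla_x\phi$ ($\|\nabla_x\phi\|\le C_\phi$) and $\lambda_k\rho\le C_\phi+\alpha_k$ from Lemma~\ref{lem:dual-bound} to pull out the $\gamma_k\alpha_k(C_\phi+\alpha_k)$ term. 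Collecting terms, choosing $\gamma_k\le(L_f+L_{xy}^\phi)^{-1}$ so that $\tfrac{\gamma_k^2(L_f+L_{xy}^\phi)}{2}\le\tfrac{\gamma_k}{2}$, and using \eqref{eq:sol-y} to replace $\|y_k-y^*(x_k)\|^2$ by $\Delta_1^y\delta_y^{N_k}$, yields \eqref{eq:one-step-stationary}.

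For part \textbf{(II)}, I would apply the descent inequality to $p$ using its locally Lipschitz gradient with constant $L_p(x_k)$ from Lemma~\ref{lem:gplus}: $p(x_{k+1})\le p(x_k)+\gamma_k\langle\nabla p(x_k),d_k\rangle+\tfrac{L_p(x_k)}{2}\gamma_k^2\|d_k\|^2$, where $\nabla p(x_k)=2[g(x_k)]_+\nabla g(x_k)$ and $\nabla g(x_k)=\nabla_x\psi(x_k,w^*(x_k))$. The crux is bounding $\langle\nabla p(x_k),d_k\rangle=2[g(x_k)]_+\langle\nabla_x\psi(x_k,w^*(x_k)),d_k\rangle$. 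Since $d_k$ satisfies $\nabla_x\psi(x_k,w_k)^\top d_k\le-\alpha_k\rho(x_k,w_k)$ when $\zeta(x_k,w_k)>0$ (and when $\zeta=0$ the inequality also holds as noted in the text), I substitute $\nabla_x\psi(x_k,w^*(x_k))=\nabla_x\psi(x_k,w_k)+(\nabla_x\psi(x_k,w^*(x_k))-\nabla_x\psi(x_k,w_k))$, so the main term contributes $-2\gamma_k\alpha_k[g(x_k)]_+\rho(x_k,w_k)$; using $[g(x_k)]_+\rho(x_k,w_k)\ge[\psi(x_k,w_k)]_+\rho(x_k,w_k)=\zeta(x_k,w_k)$ (from the first inequality in \eqref{eq:subopt-psi}, $[g(x_k)]_+\ge[\psi(x_k,w_k)]_+$) gives the $-\gamma_k\alpha_k\zeta(x_k,w_k)$ term (after moving to the left and absorbing the factor of $2$). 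The perturbation term $2[g(x_k)]_+\langle\nabla_x\psi(x_k,w^*(x_k))-\nabla_x\psi(x_k,w_k),d_k\rangle$ is controlled by $L_{xw}^\psi$-Lipschitzness in $w$: its norm is $\le 2[g(x_k)]_+L_{xw}^\psi\|w^*(x_k)-w_k\|\|d_k\|$; splitting via Young's inequality, one part becomes $\gamma_k L_{xw}^\psi[g(x_k)]_+^2\|w_k-w^*(x_k)\|^2$-type which via \eqref{eq:sol-w} gives the $\tfrac{L_{xw}^\psi}{2}p(x_k)\Delta_1^w\delta_w^{M_k}$ term, and the $\|d_k\|^2$ part folds into the $\tfrac{\gamma_k^2(L_p(x_k)+2L_{xw}^\psi)}{4}\|d_k\|^2$ term; the cross term $[g(x_k)]_+C_\psi\|w_k-w^*(x_k)\|\|d_k\|$ (bounding $\|\nabla_x\psi(x_k,w^*(x_k))\|\le C_\psi$ and $\|d_k\|\le 2C_\phi+\alpha_0$ from the triangle inequality plus Lemma~\ref{lem:dual-bound}) gives the $\gamma_k\Delta_1^w\delta_w^{M_k}C_\psi(2C_\phi+\alpha_0)$ term—using $[g(x_k)]_+\|w_k-w^*(x_k)\|\le\tfrac12 p(x_k)+\tfrac12\|w_k-w^*(x_k)\|^2$ is an option but the stated form suggests bounding $[g(x_k)]_+$ and one factor of $\|d_k\|$ by constants and absorbing $\|w_k-w^*(x_k)\|^2\le\Delta_1^w\delta_w^{M_k}$. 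Rearranging and dividing by $2$ yields \eqref{eq:one-step-feasibility}.

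The main obstacle I anticipate is bookkeeping in part \textbf{(II)}: keeping track of which factors of $[g(x_k)]_+$, $\|d_k\|$, and $\|w_k-w^*(x_k)\|$ get absorbed into which of the four terms on the right-hand side, since there are several plausible Young's-inequality splittings and only one produces exactly the stated constants. In particular, one must be careful that the $\|d_k\|^2$ coefficient comes out as $\tfrac{\gamma_k^2(L_p(x_k)+2L_{xw}^\psi)}{4}$ and not something larger, which forces a specific choice (the $2L_{xw}^\psi$ clearly must come from splitting the perturbation cross-term with weight tuned so its $\|d_k\|^2$ contribution is exactly $\tfrac{\gamma_k^2 L_{xw}^\psi}{2}\|d_k\|^2$, then halved). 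A secondary subtlety is justifying that the QP feasibility inequality $\nabla_x\psi(x_k,w_k)^\top d_k+\alpha_k\rho(x_k,w_k)\le 0$ — and hence the displayed inequality $[\psi(x_k,w_k)]_+\nabla_x\psi(x_k,w_k)^\top d_k\le-\alpha_k\zeta(x_k,w_k)$ — holds even in the branch $\zeta(x_k,w_k)=0$, where $d_k$ is defined by the \emph{unconstrained} QP; but there $[\psi(x_k,w_k)]_+=0$ or $\nabla_x\psi(x_k,w_k)=0$, so both sides vanish and the inequality is trivially an equality, so this case causes no real difficulty. Otherwise the argument is a fairly mechanical combination of the smoothness descent lemmas with Lemma~\ref{lem:dual-bound} and the estimates \eqref{eq:sol-y}--\eqref{eq:subopt-psi}.
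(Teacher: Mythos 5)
Your plan follows the paper's proof essentially step for step: descent inequalities for $f$ and for $p$ along $x_{k+1}=x_k+\gamma_k d_k$, decomposition of $\nabla f(x_k)$ and $\nabla g(x_k)$ into the inexact gradients plus Lipschitz-controlled errors, the structure of the QP solution $d_k$, Lemma~\ref{lem:dual-bound}, and Young's inequality, with the same stepsize restriction absorbing the $\|d_k\|^2$ coefficients. Two local slips are worth flagging before you write it out.

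First, in Part (I) the assertion $\langle\nabla_x\phi(x_k,y_k),d_k\rangle\le-\|d_k\|^2$ has the wrong sign: from $d_k+\nabla_x\phi(x_k,y_k)=-\lambda_k\nabla_x\psi(x_k,w_k)$ and complementary slackness $\lambda_k\bigl(\nabla_x\psi(x_k,w_k)^\top d_k+\alpha_k\rho(x_k,w_k)\bigr)=0$, one gets the identity $\langle\nabla_x\phi(x_k,y_k),d_k\rangle=-\|d_k\|^2+\alpha_k\lambda_k\rho(x_k,w_k)\ge-\|d_k\|^2$. The nonnegative surplus $\alpha_k\lambda_k\rho(x_k,w_k)$ is precisely what Lemma~\ref{lem:dual-bound} bounds by $\alpha_k(C_\phi+\alpha_k)$, and it is the sole source of the $\gamma_k\alpha_k(C_\phi+\alpha_k)$ term; you do invoke the dual bound for this purpose immediately afterwards, so the plan is recoverable, but the step as literally written is false (and would render the $\alpha_k$ term superfluous, which should have been a warning sign). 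Second, in Part (II) you bound the leading term by $-2\gamma_k\alpha_k[g(x_k)]_+\rho(x_k,w_k)$, which requires the QP constraint $\nabla_x\psi(x_k,w_k)^\top d_k\le-\alpha_k\rho(x_k,w_k)$; that constraint is unavailable in the branch $\zeta(x_k,w_k)=0$, and your observation that "both sides vanish" there only applies to the $[\psi(x_k,w_k)]_+$-weighted inequality, not the $[g(x_k)]_+$-weighted one (one can have $[\psi(x_k,w_k)]_+=0$ while $[g(x_k)]_+>0$ and $\nabla_x\psi(x_k,w_k)^\top d_k>0$). The paper sidesteps this by first replacing $[g(x_k)]_+$ with $[\psi(x_k,w_k)]_+$ via \eqref{eq:subopt-psi}, at the cost of an additive $\mathcal O(\delta_w^{M_k})\|\nabla_x\psi(x_k,w_k)\|\|d_k\|$ error; that replacement, rather than the $w$-perturbation cross term you point to, is what produces the $\gamma_k\Delta_1^w\delta_w^{M_k}C_\psi(2C_\phi+\alpha_0)$ term, while the two $L_{xw}^\psi$ terms come from a single Young split of $2\gamma_kL_{xw}^{\psi}[g(x_k)]_+\|w_k-w^*(x_k)\|\|d_k\|$, as you anticipate.
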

\begin{proof}
\textbf{Part (I):} 
Using Lipschitz continuity of gradient of $f$ as established in \autoref{Prop1:} and update of $x_{k+1}$, we have that
\begin{align*}
    f(x_{k+1})&=f(x_k)+\nabla f(x_k)^\top(x_{k+1}-x_k)+\frac{L_f}{2}\|x_{k+1}-x_k\|^2\\
    &=f(x_k)+\gamma_k \nabla f(x_k)^\top  d_k +\frac{\gamma_k ^2 L_f}{2}\|  d_k \|^2\\
    &=f(x_k)+\gamma_k \left(\nabla_x\phi(x_k,y_k)+  d_k \right)^\top  d_k +\left(\frac{\gamma_k ^2 L_f}{2}-\gamma_k \right)\|  d_k \|^2\\& \quad+\gamma_k \|\nabla f(x_k)-\nabla_x\phi(x_k,y_k)\|\|  d_k \|\\
    &\leq f(x_k)-\gamma_k \lambda_k\nabla_x \psi(x_k,w_k)^\top  d_k +\left(\frac{\gamma_k ^2 L_f}{2}-\gamma_k \right)\|  d_k \|^2+\gamma_k  L_{xy}^{\phi}\|y_k-y^*(x_k)\|\|  d_k \|,
\end{align*}
where in the last inequality we used $  d_k =-\nabla_x\phi(x_k,y_k)-\lambda_k\nabla_x \psi(x_k,w_k)$, $\nabla f(x_k)=\nabla_x\phi(x_k,y^*(x_k))$, and Lipschitz continuity of the gradient of function $\phi$.
Moreover, from complementarity slackness condition we know that $\lambda_k\left(\nabla_x \psi(x_k,w_k)^\top  d_k +\alpha_k \rho(x_k,w_k)\right)=0$, hence we obtain
\begin{align}\label{eq:bound f}
   \nonumber &f(x_{k+1})-f(x_k) \\ \nonumber&\leq \left(\frac{\gamma_k ^2 L_f}{2}-\gamma_k \right)\| d_k\|^2+\gamma_k \alpha_k \lambda_k\rho(x_k,w_k)+\gamma_k  L_{xy}^{\phi}\|y_k-y^*(x_k)\|\| d_k\|\\
    &\leq \left(\frac{\gamma_k ^2 L_f}{2}-\gamma_k \right)\| d_k\|^2+\gamma_k \alpha_k  (C_\phi+\alpha_k)+\gamma_k  L_{xy}^{\phi}\|y_k-y^*(x_k)\|\| d_k\|\nonumber \\
    &\leq \left(\frac{\gamma_k ^2 L_f}{2}-\gamma_k \right)\| d_k\|^2+\gamma_k \alpha_k  (C_\phi+\alpha_k) +\frac{L_{xy}^{\phi}}{2}\|y_k-y^*(x_k)\|^2+\frac{\gamma_k ^2L_{xy}^{\phi}}{2}\| d_k\|^2.
\end{align}
where the penultimate inequality follows from the application of Lemma \ref{lem:dual-bound} and $\|\nabla_x\phi(x,y)\|\leq C_\phi$, moreover, the last inequality is due to Young's inequality {(where $p=q=2$)}. Now, rearranging the terms and selecting $\gamma_k  \leq (L_f+L_{xy}^\phi)^{-1}$ lead to the result of part (I).

\textbf{Part (II):} 
Recall that $\zeta(x_k,w_k)=[\psi(x_k,w_k)]_+\|\nabla_x\psi(x_k,w_k)\|$ and $\rho(x_k,w_k)=\|\nabla_x\psi(x_k,w_k)\|$. 
Based on Lemma \ref{lem:gplus} and the update rule of $x_{k+1}=x_k+\gamma_k  d_k$, we have that
\begin{align}\label{bound p}
&p(x_{k+1})-p(x_k)\nonumber\\&\leq \langle \nabla p(x_k),x_{k+1}-x_k\rangle +\frac{L_p(x_k)}{2}\|x_{k+1}-x_k\|^2\nonumber\\
\nonumber&= 2\gamma_k  [g(x_k)]_+\nabla g(x_k)^\top  d_k+\frac{\gamma_k ^2L_p(x_k)}{2}\| d_k\|^2\\
\nonumber&= 2\gamma_k  [g(x_k)]_+\nabla_x\psi(x_k,w_k)^\top  d_k+2\gamma_k  [g(x_k)]_+(\nabla g(x_k)-\nabla_x\psi(x_k,w_k))^\top  d_k+\tfrac{\gamma_k ^2L_p(x_k)}{2}\| d_k\|^2\\
&\leq 2\gamma_k  \underbrace{[g(x_k)]_+\nabla_x\psi(x_k,w_k)^\top  d_k}_{\text{term (a)}}+2\gamma_k  L_{xy}^{\psi}[g(x_k)]_+\|w_k-w^*(x_k)\|\| d_k\|+\tfrac{\gamma_k ^2L_p(x_k)}{2}\| d_k\|^2.
\end{align}
Considering term (a), from \eqref{eq:subopt-psi} one can observe that
\begin{align}\label{eq:term a}
[g(x_k)]_+\nabla_x\psi(x_k,w_k)^\top d_k&\leq [\psi(x_k,w_k)]_+\nabla_x\psi(x_k,w_k)^\top d_k+\Delta_1^w\delta_w^{M_k}\|\nabla_x\psi(x_k,w_k)\|\|d_k\| \nonumber\\
&\leq -\alpha_k [\psi(x_k,w_k)]_+\rho(x_k,w_k) + \Delta_1^w\delta_w^{M_k}\|\nabla_x\psi(x_k,w_k)\|\|d_k\| \nonumber \\
&\leq -\alpha_k \zeta(x_k,w_k) + \Delta_1^w\delta_w^{M_k}C_\psi (2C_\phi+\alpha_0),
\end{align}
where in the second inequality 
we use the fact that 
$d_k$ is a feasible solution of the QP subproblem if $\zeta(x_k,w_k)>0$, hence, $[\psi(x_k,w_k)]_+\nabla_x\psi(x_k,w_k)^\top d_k\leq -\alpha_k [\psi(x_k,w_k)]_+\rho(x_k,w_k)=-\alpha_k\zeta(x_k,w_k)$, otherwise the inequality holds trivially. Moreover, the last inequality follows from Assumption \ref{assum:objective}-(ii) and Lemma \ref{lem:dual-bound} and one can easily verify that $\|d_k\|\leq 2C_\phi+\alpha_0$ and from {Assumption \ref{assum:constraint} we have} $\|\nabla_x\psi(x_k,w_k)\|\leq C_\psi$. Therefore, combining \eqref{eq:term a} with \eqref{bound p}, we obtain 
\begin{align*}
p(x_{k+1})-p(x_k)&\leq -2\gamma_k \alpha_k \zeta(x_k,w_k)+2\gamma_k \Delta_1^w\delta_w^{M_k}C_\psi (2C_\phi+\alpha_0)\nonumber\\
&\quad +2\gamma_k  L_{xy}^{\psi}[g(x_k)]_+\|w_k-w^*(x_k)\|\| d_k\|+\frac{\gamma_k ^2L_p(x_k)}{2}\| d_k\|^2\nonumber \\
&\leq -2\gamma_k \alpha_k \zeta(x_k,w_k)+2\gamma_k \Delta_1^w\delta_w^{M_k}C_\psi (2C_\phi+\alpha_0)\nonumber \\
&\quad +L_{xw}^{\psi}p(x_k)\|w_k-w^*(x_k)\|^2 +\frac{\gamma_k ^2(L_p(x_k)+2L_{xw}^{\psi})}{2}\| d_k\|^2.
\end{align*}
Next, rearranging the above inequality, dividing both sides by $2$, lead to the desired result.
\end{proof}

\subsection{Proof of Theorems \ref{th:theorem1-draft} and \ref{th:theorem2-draft}}\label{sec:proof-thm}

Before proving Theorems \ref{th:theorem1-draft} and \ref{th:theorem2-draft}, we present a technical lemma on the recursive relation of a non-negative real-valued sequence that will be used in our convergence analysis.
\begin{lemma}[\cite{bauschke2017correction} Lemma 5.31]\label{Robbins_lemma}
 Let $\{v_k\}$, $\{u_k\}$, $\{\alpha_k\}$, $\{\beta_k\}$ be sequences of nonnegative reals with $\sum_{k=0}^{\infty} \alpha_k < \infty$ and $\sum_{k=0}^{\infty} \beta_k < \infty$ such that $v_{k+1} \leq (1+\alpha_k)v_k - u_k + \beta_k$ for all $k$. Then, $\{v_k\} $ converges and $\sum_{k=0}^{\infty} u_k < \infty$.
\end{lemma}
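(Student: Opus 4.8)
The plan is to eliminate the multiplicative factor $(1+\alpha_k)$ by rescaling with partial products, thereby reducing the hypothesis to a purely additive quasi-monotone recursion, and then to run the classical shift (tail-sum) argument. First I would observe that since $\sum_{k=0}^\infty \alpha_k < \infty$ and $\log(1+\alpha_k)\le \alpha_k$, the infinite product $P_\infty \triangleq \prod_{k=0}^\infty (1+\alpha_k)$ converges to a finite limit; consequently the partial products $P_k \triangleq \prod_{j=0}^{k-1}(1+\alpha_j)$ (with $P_0=1$) form a non-decreasing sequence confined to $[1, P_\infty]$. Dividing the assumed inequality by $P_{k+1}=(1+\alpha_k)P_k$ and setting $w_k \triangleq v_k/P_k$ yields
\[
w_{k+1} \le w_k - \frac{u_k}{P_{k+1}} + \frac{\beta_k}{P_{k+1}}.
\]
Because $P_{k+1}\ge 1$, the rescaled perturbations $\tilde\beta_k \triangleq \beta_k/P_{k+1}$ satisfy $\sum_k \tilde\beta_k \le \sum_k \beta_k < \infty$, while $\tilde u_k \triangleq u_k/P_{k+1}\ge 0$. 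This trades the multiplicative coefficient for the additive recursion $w_{k+1}\le w_k - \tilde u_k + \tilde\beta_k$ with a summable error term.

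Next I would introduce the shifted sequence $s_k \triangleq w_k + \sum_{j=k}^\infty \tilde\beta_j$, whose tail is finite by the previous step. Substituting the additive recursion gives
\[
s_{k+1} \le w_k - \tilde u_k + \tilde\beta_k + \sum_{j=k+1}^\infty \tilde\beta_j = s_k - \tilde u_k \le s_k,
\]
so $\{s_k\}$ is non-increasing and bounded below by $0$ (since $w_k\ge 0$ and each $\tilde\beta_j\ge 0$), hence convergent to some $s_\infty\ge 0$. Because $\sum_{j=k}^\infty \tilde\beta_j \to 0$, it follows that $w_k = s_k - \sum_{j=k}^\infty \tilde\beta_j \to s_\infty$, and therefore $v_k = P_k w_k \to P_\infty s_\infty$, which establishes convergence of $\{v_k\}$.

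For the summability claim I would telescope $\tilde u_k \le s_k - s_{k+1}$ over $k=0,\dots,N$ to obtain $\sum_{k=0}^N \tilde u_k \le s_0 - s_{N+1} \le s_0$, so that $\sum_k \tilde u_k < \infty$. Finally, since $u_k = P_{k+1}\tilde u_k \le P_\infty \tilde u_k$ with $P_\infty<\infty$, I conclude $\sum_k u_k \le P_\infty \sum_k \tilde u_k < \infty$, completing the proof.

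The main obstacle — indeed the only nonroutine ingredient — is handling the multiplicative coefficient $(1+\alpha_k)$: without the rescaling one cannot coerce the sequence into a monotone form, and the rescaling is legitimate precisely because $\sum_k \alpha_k < \infty$ keeps the normalizing products bounded away from both $0$ and $\infty$. Once that reduction is in place, the remainder is the standard monotone-plus-summable-error argument and requires only elementary manipulations.
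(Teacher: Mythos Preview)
Your proof is correct and follows the standard Robbins--Siegmund argument. The paper does not actually prove this lemma: it is quoted verbatim from \cite{bauschke2017correction} (Lemma~5.31) and used as a black-box tool in Lemma~\ref{lem:bounded}, so there is no in-paper proof to compare against. Your rescaling by the partial products $P_k=\prod_{j<k}(1+\alpha_j)$ to kill the multiplicative factor, followed by the tail-shift $s_k=w_k+\sum_{j\ge k}\tilde\beta_j$ to produce a genuinely non-increasing sequence, is exactly the classical route and every step checks out.
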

Using this result, we first show that the sequence $\{\gamma_k\|d_k\|^2\}_{k}$ is summable and $\{p(x_k)\}_k$ is a bounded sequence.
\begin{lemma}\label{lem:bounded}
Let $\{x_k\}_k$ be the sequence generated by Algorithm \ref{alg:sp_const} such that $\sum_{k=0}^{+\infty}\gamma_k\alpha_k<+\infty$, $\sum_{k=0}^{+\infty}\delta_y^{N_k}<+\infty$, and $\sum_{k=0}^{+\infty}\delta_w^{M_k}<+\infty$. Under the premises of Lemma \ref{lem:one-step}, we have that 
(i) $\sum_{k=0}^{+\infty}\gamma_k\|d_k\|^2<+\infty$; 
(ii) $\{p(x_k)\}_{k\geq 0}$ is a bounded sequence, i.e., there exists $C_g>0$ such that $[g(x_k)]_+\leq C_g$ for any $k\geq 0$.
\end{lemma}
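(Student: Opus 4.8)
The plan is to combine the two one-step inequalities from Lemma~\ref{lem:one-step} and then invoke the Robbins-Siegmund-type result (Lemma~\ref{Robbins_lemma}) to conclude summability and boundedness simultaneously. The natural Lyapunov candidate is $v_k \triangleq f(x_k) - \underf + \tfrac{1}{2}p(x_k)$, where $\underf$ is a lower bound on $f$ (which exists since $\nabla_x\phi$ is bounded but, more to the point, since $f$ is bounded below on the relevant region — if $f$ is not globally bounded below one instead works directly with the telescoping form, see below). First I would add \eqref{eq:one-step-stationary} and \eqref{eq:one-step-feasibility}: the left-hand sides give $\tfrac{\gamma_k}{2}\|d_k\|^2 + \gamma_k\alpha_k\zeta(x_k,w_k)$, and the right-hand sides telescope in $f$ and $p$ and contribute the ``noise'' terms $\gamma_k\alpha_k(C_\phi+\alpha_k)$, $\tfrac{L_{xy}^\phi}{2}\Delta_1^y\delta_y^{N_k}$, $\gamma_k\Delta_1^w\delta_w^{M_k}C_\psi(2C_\phi+\alpha_0)$, and crucially the two terms that depend on $p(x_k)$: namely $\tfrac{L_{xw}^\psi}{2}p(x_k)\Delta_1^w\delta_w^{M_k}$ and the part of $\tfrac{\gamma_k^2(L_p(x_k)+2L_{xw}^\psi)}{4}\|d_k\|^2$ coming from $L_p(x_k) = 2C_\psi^2 + L_g^2 + p(x_k)$.

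The key observation is that all the ``bad'' $p(x_k)$-dependent terms can be folded into a multiplicative factor $(1+\alpha_k')v_k$ with $\sum_k \alpha_k' < \infty$. Specifically, $\tfrac{L_{xw}^\psi}{2}\Delta_1^w\delta_w^{M_k}$ is summable by hypothesis, so that term is $\le \alpha_k^{(1)} v_k$ with $\alpha_k^{(1)}$ summable. For the $\tfrac{\gamma_k^2}{4}p(x_k)\|d_k\|^2$ term, I would use the a priori bound $\|d_k\| \le 2C_\phi + \alpha_0$ (established in the proof of Lemma~\ref{lem:one-step}) to write $\tfrac{\gamma_k^2}{4}p(x_k)\|d_k\|^2 \le \tfrac{\gamma_k^2(2C_\phi+\alpha_0)^2}{4}p(x_k) \le \tfrac{\gamma_k^2(2C_\phi+\alpha_0)^2}{2}v_k$, and $\sum_k \gamma_k^2 < \infty$ (which follows from $\sum_k \gamma_k\alpha_k < \infty$ together with $\gamma_k$ bounded and $\alpha_k$ chosen so that $\gamma_k \lesssim \alpha_k$, or simply from $\gamma_k = \gamma/T$-type choices — this needs to be read off from the parameter assumptions; if only $\sum\gamma_k\alpha_k<\infty$ is assumed one absorbs this term using $\gamma_k\|d_k\|^2$ being what we are trying to bound, moving it to the left side provided $\gamma_k$ is small enough that $\tfrac{\gamma_k^2 L_p(x_k)}{4} \le \tfrac{\gamma_k}{4}$, which again uses boundedness of $p(x_k)$ — circular unless handled carefully). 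The remaining constant part of $L_p(x_k)$, i.e. $2C_\psi^2+L_g^2$, contributes $\tfrac{\gamma_k^2(2C_\psi^2+L_g^2+2L_{xw}^\psi)}{4}\|d_k\|^2$; since $\gamma_k \le (L_f+L_{xy}^\phi)^{-1}$ and we already have $-\tfrac{\gamma_k}{2}\|d_k\|^2$ on the left from Part (I), for $\gamma_k$ small enough a positive fraction of $\tfrac{\gamma_k}{2}\|d_k\|^2$ survives on the left. Putting this together yields $v_{k+1} \le (1+\alpha_k')v_k - u_k + \beta_k$ with $u_k = c\gamma_k\|d_k\|^2 + \gamma_k\alpha_k\zeta(x_k,w_k)$ for some $c>0$, $\alpha_k' = \alpha_k^{(1)} + \tfrac{\gamma_k^2(2C_\phi+\alpha_0)^2}{2}$ summable, and $\beta_k = \gamma_k\alpha_k(C_\phi+\alpha_k) + \tfrac{L_{xy}^\phi}{2}\Delta_1^y\delta_y^{N_k} + \gamma_k\Delta_1^w\delta_w^{M_k}C_\psi(2C_\phi+\alpha_0)$ summable by the stated hypotheses. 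Lemma~\ref{Robbins_lemma} then gives that $\{v_k\}$ converges — hence $\{p(x_k)\}$ is bounded, proving (ii) with $C_g^2 = \sup_k p(x_k) < \infty$ — and that $\sum_k u_k < \infty$, in particular $\sum_k \gamma_k\|d_k\|^2 < \infty$, proving (i).

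The main obstacle is the circularity in controlling the $p(x_k)$-dependent coefficient $L_p(x_k)$ inside the step bound: the one-step descent for $p$ only gives a clean recursion if $L_p(x_k)$ is already known to be bounded, but $L_p(x_k)$ bounded is essentially what we are trying to prove. The resolution is precisely the Lyapunov packaging above — by bundling $\tfrac12 p(x_k)$ into $v_k$ and using the crude but uniform bound $\|d_k\|\le 2C_\phi+\alpha_0$, the troublesome term $\tfrac{\gamma_k^2}{4}p(x_k)\|d_k\|^2$ becomes a genuinely $o(1)$-summable multiple of $v_k$ rather than an uncontrolled feedback term, so Lemma~\ref{Robbins_lemma} applies without any a priori boundedness of $p$. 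A secondary technical point is verifying $\sum_k\gamma_k^2<\infty$ and that $\gamma_k$ can be taken small enough (depending only on $L_f, L_{xy}^\phi, C_\psi, L_g, L_{xw}^\phi$ and $C_\phi,\alpha_0$) to retain a strictly positive multiple of $\|d_k\|^2$ on the left-hand side; this is routine given the parameter choices in Theorem~\ref{th:theorem2-draft} but should be stated explicitly as the condition under which the lemma holds.
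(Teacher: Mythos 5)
Your plan correctly identifies Lemma~\ref{Robbins_lemma} as the engine and correctly isolates the real difficulty (the $p(x_k)$-dependence hidden inside $L_p(x_k)$), but the way you resolve it leaves a genuine gap. Your single-Lyapunov argument with $v_k = f(x_k)-\underf+\tfrac12 p(x_k)$ turns the term $\tfrac{\gamma_k^2}{4}p(x_k)\|d_k\|^2$ into a multiplicative perturbation $\tfrac{\gamma_k^2(2C_\phi+\alpha_0)^2}{2}v_k$, which requires $\sum_k\gamma_k^2<+\infty$. That condition is not among the hypotheses of the lemma (only $\sum_k\gamma_k\alpha_k<+\infty$ is assumed), and it actually fails for the parameter regime the lemma is invoked in: Theorem~\ref{th:theorem2-draft} uses a constant stepsize $\gamma_k=\gamma=\mathcal O(T^{-1/3})$, for which $\sum_k\gamma_k^2$ diverges while $\sum_k\gamma_k\alpha_k$ converges because $\alpha_k$ decays like $(k+2)^{-(1+\omega)}$. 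Your parenthetical fallback (absorbing the term into the left-hand side using $\gamma_k^2L_p(x_k)\le\gamma_k$) is, as you yourself note, circular, so the proposal as written does not close.

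The fix is to decouple the two inequalities rather than combine them. The descent inequality \eqref{eq:one-step-stationary} for $f$ contains no $p(x_k)$-dependent terms at all, so Lemma~\ref{Robbins_lemma} applied to $\{f(x_k)\}$ alone already yields claim (i), $\sum_k\gamma_k\|d_k\|^2<+\infty$, with no reference to $p$. With (i) in hand, rewrite \eqref{eq:one-step-feasibility} as
\begin{equation*}
p(x_{k+1})\le\Bigl(1+L_{xw}^{\psi}\Delta_1^w\delta_w^{M_k}+\tfrac{\gamma_k^2}{2}\|d_k\|^2\Bigr)p(x_k)-2\gamma_k\alpha_k\zeta(x_k,w_k)+\mathbf b_k,
\end{equation*}
where $\mathbf b_k$ collects the terms with the constant part $2C_\psi^2+L_g^2$ of $L_p(x_k)$. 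The multiplicative coefficient is now summable because $\gamma_k^2\|d_k\|^2\le\gamma_k\|d_k\|^2$ (using $\gamma_k\in(0,1)$) and part (i); no bound on $\sum\gamma_k^2$ is needed, and the crude bound $\|d_k\|\le 2C_\phi+\alpha_0$ is not required here either. A second application of Lemma~\ref{Robbins_lemma} then gives convergence, hence boundedness, of $\{p(x_k)\}$. This sequential route is exactly what makes the apparent circularity dissolve; your combined-Lyapunov route can only be rescued under the extra assumption $\sum_k\gamma_k^2<+\infty$, which would restrict the admissible stepsizes.
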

\begin{proof}

(i) Consider \textbf{Part (I)} of Lemma \ref{lem:one-step} by rearranging terms one can obtain:
$$f(x_{k+1}) \leq f(x_k)-\frac{\gamma_k }{2}\|d_k\|^2+ \gamma_k \alpha_k  (C_\phi+\alpha_k)+\frac{L_{xy}^{\phi}}{2}\Delta_1^y\delta_y^{N_k}.$$
Since $\alpha_k$ is a non-increasing sequence and it is assumed that $\sum_{k=0}^{+\infty}\gamma_k\alpha_k<+\infty$, one can verify that $\sum_{k=0}^{+\infty}\gamma_k\alpha_k^2\leq \sum_{k=0}^{+\infty}\gamma_k\alpha_k<+\infty$. Moreover, since $\sum_{k=0}^{+\infty}\delta_y^{N_k} < +\infty$, we have $\sum_{k=0}^{+\infty}\big(\gamma_k \alpha_k  (C_\phi+\alpha_k)+\frac{L_{xy}^{\phi}}{2}\Delta_1^y\delta_y^{N_k}\big)<+\infty$. Therefore, applying Lemma~\ref{Robbins_lemma}, we conclude that $\sum_{k=0}^{+\infty}\gamma_k\|d_k\|^2<+\infty$.

(ii) Similarly, from \textbf{Part (II)} of Lemma \ref{lem:one-step}, multiplying both sides by 2, using $L_p(x)=2C_\psi^2+L_g^2+p(x)$ from Lemma \ref{lem:gplus}, and rearranging terms yields: 
\begin{align*}
    p(x_{k+1})&\leq (1+\underbrace{{L_{xw}^{\psi}}\Delta_1^w\delta_w^{M_k}+\frac{\gamma_k^2}{2}\|d_k\|^2}_{\mathbf a_k})p(x_{k})-2\gamma_k\alpha_k\zeta(x_k,w_k)\nonumber\\
&\qquad  +\underbrace{2\gamma_k\Delta_1^w\delta_w^{M_k}C_\psi (2C_\phi+\alpha_0)+\frac{\gamma_k^2(2C_\psi^2+L_g^2+2L_{xw}^{\psi})}{2}\| d_k\|^2}_{\mathbf b_k}.
\end{align*}
From the assumptions in the statement of the lemma and the result of part (I) and that $\gamma_k\in (0,1)$, we have that $\sum_{k=0}^{+\infty}\mathbf a_k<+\infty$ and $\sum_{k=0}^{+\infty}\mathbf b_k <+\infty $.
 Hence, the conditions of Lemma \ref{Robbins_lemma} are satisfied, and we conclude that the sequence $\{p(x_k)\}_{k\geq 0}$ converges. Therefore, $\{p(x_k)\}_{k\geq 0}$ is bounded, i.e., there exists $C_g > 0$ such that $[g(x_k)]_+ \leq C_g$ for all $k \geq 0$.
\end{proof}

Now, we are ready to prove Theorem  \ref{th:theorem1-draft}. First, we restate the statement with full details here.

\begin{theorem}[Restatment of Theorem \ref{th:theorem1-draft}]\label{th:theorem1}
Suppose Assumptions \ref{assum:objective}, \ref{assum:constraint}, and \ref{assum:reg} hold. Let $\{x_k,\lambda_k\}_{k\geq 0}$ be the sequence generated by Algorithm \ref{alg:sp_const} such that $\{\alpha_k\}_k$ is a non-increasing sequence and $\gamma_k \leq (L_f+L_{xy}^\phi)^{-1}$. 
Then, for any $T\geq 1$ and $k\geq 1$, 
\begin{align}
\textbf{(I)}\quad &\frac{1}{\Gamma_T}\sum_{k=0}^{T-1}\gamma_k\|d_k\|^2\leq \frac{2(f(x_0)-f(x_T))}{\Gamma_T}+ \frac{1}{\Gamma_T}\sum_{k=0}^{T-1}\gamma_k\alpha_k  (C_\phi+\alpha_k) +\frac{L_{xy}^{\phi}\Delta_1^y}{\Gamma_T}\sum_{k=0}^{T-1}\delta_y^{N_k},\\
\textbf{(II)}\quad & 
\frac{1}{A_T}\sum_{k=0}^{T-1}\alpha_k[g(x_k)]_+^{2\theta}\leq \frac{\mu}{A_T}\sum_{k=0}^{T-1}\Big(\tfrac{p(x_k)}{\gamma_k}-\tfrac{p(x_{k+1})}{\gamma_k}\Big)+ \tfrac{\mu}{A_T}\sum_{k=0}^{T-1}\Big(\tfrac{\bar{\Delta}}{\gamma_k}\delta_w^{M_k}+\tfrac{2\alpha_k}{\mu}(\Delta_2^w)^{2\theta}\delta_w^{2\theta M_k}\Big) \nonumber \\
\qquad &+\frac{\mu(L_p+2L_{xw}^{\psi})}{2A_T}\sum_{k=0}^{T-1}\gamma_k\| d_k\|^2 \label{final bound p},
\end{align}
for some $\bar\Delta>0$, where $\Gamma_T\triangleq \sum_{k=0}^{T-1}\gamma_k$ and $A_T\triangleq \sum_{k=0}^{T-1}\alpha_k$. 
\end{theorem}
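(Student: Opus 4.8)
The plan is to obtain Theorem~\ref{th:theorem1} directly by summing the one-step inequalities of Lemma~\ref{lem:one-step} over $k=0,\dots,T-1$ and then massaging the error terms into the summable-sequence form claimed in the statement. For \textbf{Part (I)}, I would start from \eqref{eq:one-step-stationary}, which already has the shape $\tfrac{\gamma_k}{2}\|d_k\|^2 \le f(x_k)-f(x_{k+1}) + \gamma_k\alpha_k(C_\phi+\alpha_k) + \tfrac{L_{xy}^\phi}{2}\Delta_1^y\delta_y^{N_k}$. Summing telescopes the $f(x_k)-f(x_{k+1})$ terms to $f(x_0)-f(x_T)$, multiplying by $2$ and dividing by $\Gamma_T=\sum_{k=0}^{T-1}\gamma_k$ yields exactly the bound in \textbf{(I)} with $\mathcal E_k^y = L_{xy}^\phi\Delta_1^y\delta_y^{N_k}$ (summable under $\sum_k \delta_y^{N_k}<\infty$, which holds for the parameter choices in Theorem~\ref{th:theorem2-draft}); nothing subtle happens here.

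The substance is in \textbf{Part (II)}. I would begin from \eqref{eq:one-step-feasibility}, divide by $\gamma_k$, and use Lemma~\ref{lem:bounded}(ii) to replace the local Lipschitz constant $L_p(x_k)=2C_\psi^2+L_g^2+p(x_k)$ by a uniform bound $L_p \triangleq 2C_\psi^2+L_g^2+C_g^2$ along the generated sequence — this is the place where Lemma~\ref{lem:bounded} is essential and where one must be careful to invoke boundedness of $\{p(x_k)\}$ \emph{before} claiming a global constant. This gives
\[
\alpha_k \zeta(x_k,w_k) \le \frac{p(x_k)}{2\gamma_k} - \frac{p(x_{k+1})}{2\gamma_k} + \Delta_1^w\delta_w^{M_k}C_\psi(2C_\phi+\alpha_0) + \frac{L_{xw}^\psi}{2\gamma_k}p(x_k)\Delta_1^w\delta_w^{M_k} + \frac{\gamma_k(L_p+2L_{xw}^\psi)}{4}\|d_k\|^2 .
\]
Using again $p(x_k)\le C_g^2$, the term $\tfrac{L_{xw}^\psi}{2\gamma_k}p(x_k)\Delta_1^w\delta_w^{M_k}$ is absorbed into a single $\tfrac{\bar\Delta}{\gamma_k}\delta_w^{M_k}$ factor for a suitable $\bar\Delta$. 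The next step is to convert $\zeta(x_k,w_k)=[\psi(x_k,w_k)]_+\|\nabla_x\psi(x_k,w_k)\|$ on the left into $[g(x_k)]_+^{2\theta}$. By Assumption~\ref{assum:reg}, $[\psi(x_k,w_k)]_+^{2\theta}\le \mu\,\zeta(x_k,w_k)$, so $\alpha_k[\psi(x_k,w_k)]_+^{2\theta}\le \mu\alpha_k\zeta(x_k,w_k)$; then I use $[g(x_k)]_+ \le [\psi(x_k,w_k)]_+ + \Delta_2^w\delta_w^{M_k}$ from \eqref{eq:subopt-psi} together with the subadditivity inequality $(a+b)^{2\theta}\le a^{2\theta}+b^{2\theta}$ for $\theta\in(0,1/2]$ (or the analogous $(a+b)^{2\theta}\le 2^{2\theta-1}(a^{2\theta}+b^{2\theta})$ for $\theta\in(1/2,1)$, constants absorbed) to get $[g(x_k)]_+^{2\theta} \le [\psi(x_k,w_k)]_+^{2\theta} + (\Delta_2^w)^{2\theta}\delta_w^{2\theta M_k}$. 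Multiplying the latter by $\alpha_k$, rearranging, and plugging in the previous display produces $\alpha_k[g(x_k)]_+^{2\theta} \le \mu\alpha_k\zeta(x_k,w_k) + \alpha_k(\Delta_2^w)^{2\theta}\delta_w^{2\theta M_k}$, after which I substitute the bound on $\mu\alpha_k\zeta$ from above.

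Finally I would sum over $k=0,\dots,T-1$ and divide by $A_T=\sum_{k=0}^{T-1}\alpha_k$. The telescoping contribution becomes $\tfrac{\mu}{A_T}\sum_k\big(\tfrac{p(x_k)}{\gamma_k}-\tfrac{p(x_{k+1})}{\gamma_k}\big)$ as written (note the paper deliberately keeps it in this un-telescoped form because $\gamma_k$ is inside — though for the constant-stepsize choice of Theorem~\ref{th:theorem2-draft} it does collapse); the $\|d_k\|^2$ terms aggregate into $\tfrac{\mu(L_p+2L_{xw}^\psi)}{2A_T}\sum_k\gamma_k\|d_k\|^2$, which is finite by Lemma~\ref{lem:bounded}(i); and the $\delta_w^{M_k}$ and $\delta_w^{2\theta M_k}$ terms collect into $\tfrac{\mu}{A_T}\sum_k\big(\tfrac{\bar\Delta}{\gamma_k}\delta_w^{M_k} + \tfrac{2\alpha_k}{\mu}(\Delta_2^w)^{2\theta}\delta_w^{2\theta M_k}\big)$, matching \eqref{final bound p}; these are summable because $M_k$ grows like $\log T$ so $\delta_w^{M_k}$ and $\delta_w^{2\theta M_k}$ decay fast enough. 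The main obstacle I anticipate is handling the exponent $\theta\in(1/2,1)$ cleanly in the subadditivity step $(a+b)^{2\theta}\le\dots$ and in propagating the $[\psi]_+^{1-2\theta}$ / $[\psi]_+^{4\theta-2}$-type powers that show up when $2\theta-1$ changes sign — this is exactly the reason the $M_k$ schedule in Theorem~\ref{th:theorem2-draft} carries the extra $\log(T[\psi(x_k,w_k)]_+^{4\theta-2})$ correction, and one must verify that with that schedule the collected error terms remain summable uniformly in $k$. The rest is bookkeeping.
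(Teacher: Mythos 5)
Your proposal is correct and follows essentially the same route as the paper: Part (I) is the direct telescoping sum of Lemma~\ref{lem:one-step}(I), and Part (II) proceeds by invoking Lemma~\ref{lem:bounded} to replace $L_p(x_k)$ with the uniform constant $L_p$, lower-bounding $\alpha_k\zeta(x_k,w_k)$ via Assumption~\ref{assum:reg}, converting $[\psi(x_k,w_k)]_+^{2\theta}$ to $[g(x_k)]_+^{2\theta}$ through \eqref{eq:subopt-psi} and subadditivity of $t\mapsto t^{2\theta}$ (the paper absorbs the $\theta\in(1/2,1)$ case by the factor $\tfrac12$, which is where the coefficient $2\alpha_k(\Delta_2^w)^{2\theta}$ in \eqref{final bound p} originates), and finally summing and dividing by $A_T$. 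The only cosmetic difference is that you divide by $\gamma_k$ before rather than after the $\zeta$-to-$[g]_+^{2\theta}$ conversion, which changes nothing.
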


\begin{proof}
Part (I) follows immediately from Lemma \ref{lem:one-step}-Part (I) by summing over $k=0$ to $T-1$ and dividing both sides by $\Gamma_k=\sum_{k=0}^{T-1}\gamma_k$.

To prove Part (II), first note that from Lemma \ref{lem:bounded} we have $[g(x_k)]_+\leq C_g$ which from Lemma \ref{lem:gplus} we conclude that there exists a constant  $L_p\triangleq 2C_\psi^2+L_g^2+C_g^2$ that upper bounds the local Lipschitz constant $L_p(x)$ uniformly along the sequence $\{x_k\}_{k\geq 0}$. Therefore, we can simplify the bound in \eqref{eq:one-step-feasibility} as follows
\begin{align*}
\gamma_k\alpha_k\zeta(x_k,w_k)&\leq \frac{p(x_k)}{2}-\frac{p(x_{k+1})}{2}+ \gamma_k\Delta_1^w\delta_w^{M_k}C_\psi (2C_\phi+\alpha_0)\nonumber\\
&\quad +\frac{L_{xw}^{\psi}}{2}C_g^2\Delta_1^w\delta_w^{M_k}+\frac{\gamma_k^2(L_p+2L_{xw}^{\psi})}{4}\| d_k\|^2.
\end{align*}
Using Assumption \ref{assum:reg}, we can lower bound the left-hand side of the above inequality by $\frac{\gamma_k\alpha_k}{\mu}[\psi(x_k,w_k)]_+^{2\theta}$. Moreover, from \eqref{eq:subopt-psi} and that $\theta\in(0,1)$ we have that $\frac{1}{2}[g(x_k)]_+^{2\theta} \leq [\psi(x_k,w_k)]_+^{2\theta} +(\Delta^w_2)^{2\theta}\delta_w^{2\theta M_k}$ which leads to  
\begin{align*}
\frac{\gamma_k\alpha_k}{2\mu}[g(x_k)]_+^{2\theta}&\leq \frac{p(x_k)}{2}-\frac{p(x_{k+1})}{2}+ \gamma_k\Delta_1^w\delta_w^{M_k}C_\psi (2C_\phi+\alpha_0)+\frac{L_{xw}^{\psi}}{2}C_g^2\Delta_1^w\delta_w^{M_k}\nonumber\\
&\quad +\frac{\gamma_k^2(L_p+2L_{xw}^{\psi})}{4}\| d_k\|^2+\frac{\gamma_k\alpha_k}{\mu} (\Delta^w_2)^{2\theta}\delta_w^{2\theta M_k}.
\end{align*}
Finally, multiplying both sides by $2\mu/\gamma_k$, summing over $k=0$ to $T-1$, dividing by $A_T$, and defining $\bar\Delta\triangleq \Delta_1^wC_\psi (2C_\phi+\alpha_0)$ lead to the desired result. 
\end{proof}


Now, we restate and prove Theorem \ref{th:theorem2-draft}.

\begin{theorem}[Restatment of Theorem \ref{th:theorem2-draft}]\label{th:theorem2}
Suppose Assumptions \ref{assum:objective}, \ref{assum:constraint}, and \ref{assum:reg} hold. Let $\{x_k,\lambda_k\}_{k\geq 0}$ be the sequence generated by Algorithm \ref{alg:sp_const} such that for any $k\geq0$, $\alpha_k=\frac{T^{1/3}}{(k+2)^{1+\omega}}$, $\gamma_k=\gamma =\min\{\frac{\mu C_g^{2-2\theta}}{T^{1/3}}, (L_f+L_{xy}^\phi)^{-1}\}$, $N_k=\frac{2}{1-\delta_y}\log(k+1)$, and $M_k=\frac{1}{1-\delta_w}\max\{\max\{1,\frac{1}{2\theta}\}\log(T),\log(T[\psi(x_k,w_k)]_+^{4\theta-2})\}$ if $[\psi(x_k,w_k)]_+\|\nabla_x\psi(x_k,w_k)\|>0$, otherwise, 
$M_k=\frac{1}{1-\delta_w}\max\{1,\frac{1}{2\theta}\}\log(T)$. Then, for any $\epsilon>0$, there exists $t\in\{0,\hdots, T-1\}$ such that 
\begin{enumerate}
    \item (Stationarity) $\|\nabla f(x_t)+\lambda_t\nabla g(x_t)\|\leq \epsilon$ within $T=\mathcal O(\frac{1}{\epsilon^3})$ iterations;
    \item (Feasibility) $[g(x_t)]_+\leq \epsilon$ within $T=\mathcal O(\frac{1}{\epsilon^{6\theta}})$ iterations;
    \item (Slackness) $|\lambda_t g(x_t)|\leq \epsilon$ within $T=\mathcal O(\frac{1}{\epsilon^{3\theta/(1-\theta)}})$ iterations.
\end{enumerate}
\end{theorem}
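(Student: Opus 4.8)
The plan is to turn the two aggregate estimates of Theorem~\ref{th:theorem1} into per-iterate $\epsilon$-KKT guarantees once the prescribed schedule for $(\alpha_k,\gamma,N_k,M_k)$ is substituted. First I would dispose of the bookkeeping. One checks that the schedule meets the standing requirements ($\{\alpha_k\}$ non-increasing in $k$, $\gamma\le(L_f+L_{xy}^\phi)^{-1}$) and that the inexactness levels decay as needed: using $-\ln\delta\ge1-\delta$ on $(0,1)$, the choice $N_k=\tfrac{2}{1-\delta_y}\log(k+1)$ gives $\delta_y^{N_k}\le(k+1)^{-2}$, hence $\sum_{k<T}\delta_y^{N_k}=\mathcal{O}(1)$ uniformly in $T$, and the two branches of $M_k$ give $\delta_w^{M_k}\le T^{-\max\{1,1/(2\theta)\}}$, together with the sharper $\delta_w^{M_k}\le[\psi(x_k,w_k)]_+^{\,2-4\theta}/T$ on $\{\zeta(x_k,w_k)>0\}$. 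Combined with $\sum_{k<T}\gamma_k\alpha_k=\gamma\sum_{k<T}\alpha_k=\mathcal{O}(1)$, these are precisely the premises of Lemma~\ref{lem:bounded}, which yields a uniform bound $[g(x_k)]_+\le C_g$ and hence, via Lemma~\ref{lem:gplus}, a uniform Lipschitz constant $L_p$ for $\nabla p$ along the trajectory; this is what legitimises passing from \eqref{eq:one-step-feasibility} to the form \eqref{final bound p}. (Note $C_g$ can be taken independent of the step size, so the occurrence of $C_g$ in the definition of $\gamma$ is harmless.)

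\textbf{Stationarity.} Using $\nabla f(x_t)=\nabla_x\phi(x_t,y^*(x_t))$, $\nabla g(x_t)=\nabla_x\psi(x_t,w^*(x_t))$ and $d_t=-\nabla_x\phi(x_t,y_t)-\lambda_t\nabla_x\psi(x_t,w_t)$, I would bound
\[
\|\nabla f(x_t)+\lambda_t\nabla g(x_t)\|\le\|d_t\|+L_{xy}^\phi\|y_t-y^*(x_t)\|+\lambda_t L_{xw}^\psi\|w_t-w^*(x_t)\|.
\]
The $y$-error is controlled by \eqref{eq:sol-y}; the $\lambda_t$-error is the delicate piece --- by Lemma~\ref{lem:dual-bound} together with Assumption~\ref{assum:reg} (which forces $\|\nabla_x\psi(x_t,w_t)\|\ge\mu^{-1}[\psi(x_t,w_t)]_+^{\,2\theta-1}$ whenever $[\psi(x_t,w_t)]_+>0$) one gets $\lambda_t\le\mu(C_\phi+\alpha_0)[\psi(x_t,w_t)]_+^{\,1-2\theta}$, and combining this with $\|w_t-w^*(x_t)\|^2\le\Delta_1^w\delta_w^{M_t}$ and the adaptive ($[\psi(x_t,w_t)]_+$-dependent) choice of $M_t$ yields a bound on $\lambda_t\|w_t-w^*(x_t)\|$. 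I would then fold these two errors into the quantity to be minimised and invoke Theorem~\ref{th:theorem1}(I) with constant step size: since $\Gamma_T^{-1}\sum_k\gamma_k\|d_k\|^2$ plus the (summable) error terms decays at the rate dictated by the schedule, $\min_{k<T}\big(\|d_k\|+\text{errors}_k\big)$ is driven below $\epsilon$ within $T=\mathcal{O}(\epsilon^{-3})$; taking $t$ to be the minimiser gives part~1.

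\textbf{Feasibility and slackness.} For part~2 I would apply \eqref{final bound p}: with constant $\gamma$ the $p$-terms telescope to $\gamma^{-1}(p(x_0)-p(x_T))\le\gamma^{-1}p(x_0)$, Lemma~\ref{lem:bounded}(i) controls $\sum_{k<T}\gamma_k\|d_k\|^2$, and the $\delta_w^{M_k}$-sums are handled by the bookkeeping step; dividing by $A_T$ and using $\min_{k<T}[g(x_k)]_+^{2\theta}\le A_T^{-1}\sum_k\alpha_k[g(x_k)]_+^{2\theta}$ shows the right side is at most $\epsilon^{2\theta}$ within $T=\mathcal{O}(\epsilon^{-6\theta})$; raising to the power $1/(2\theta)$ gives $[g(x_t)]_+\le\epsilon$. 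For part~3, note first that if $g(x_t)\le0$ then $\psi(x_t,w_t)\le g(x_t)\le0$, so $\zeta(x_t,w_t)=0$ and $\lambda_t=0$, hence $|\lambda_t g(x_t)|=0$; thus only $g(x_t)>0$ matters, where $|\lambda_t g(x_t)|=\lambda_t[g(x_t)]_+$. The second estimate of Lemma~\ref{lem:dual-bound} gives $\lambda_t[g(x_t)]_+\le C[g(x_t)]_+^{\,2-2\theta}+C\Delta_2^w\delta_w^{M_t}[\psi(x_t,w_t)]_+^{\,1-2\theta}$ with $C=\mu(C_\phi+\alpha_0)$; the second summand is $\le\epsilon/2$ by the choice of $M_t$, and the first is $\le\epsilon/2$ once $[g(x_t)]_+\le(\epsilon/2C)^{1/(2-2\theta)}$ --- feeding this tolerance into the feasibility rate converts its $\mathcal{O}((\cdot)^{-6\theta})$ cost into $T=\mathcal{O}(\epsilon^{-6\theta/(2-2\theta)})=\mathcal{O}(\epsilon^{-3\theta/(1-\theta)})$.

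\textbf{Main obstacle.} The hard part is the control of the modified multiplier $\lambda_t$ when $\theta\in(\tfrac12,1)$: there $\|\nabla_x\psi(x_t,w_t)\|$ may vanish as the iterate approaches the (approximate) feasible set, so $\lambda_t$ can blow up, and the multiplier-times-inexactness term $\lambda_t\|w_t-w^*(x_t)\|$ (and likewise $\delta_w^{M_t}[\psi(x_t,w_t)]_+^{1-2\theta}$ in the slackness estimate) must be tamed by taking $M_t$ not merely $\mathcal{O}(\log T)$ but also adapted to $[\psi(x_t,w_t)]_+^{\,4\theta-2}$; the delicate accounting is to verify that the resulting $[\psi(x_t,w_t)]_+$-dependent error terms, once propagated through Lemma~\ref{lem:one-step} and Theorem~\ref{th:theorem1}, remain summable and do not erode the stated exponents. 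A secondary point is that Theorem~\ref{th:theorem1} only bounds $\gamma_k$- and $\alpha_k$-weighted averages, so the min-over-$t$ extraction (and the reconciliation of the three different $\epsilon$-exponents under a single schedule) must be carried out carefully.
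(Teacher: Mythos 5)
Your proposal follows the paper's own proof essentially step for step: verify the parameter schedule meets the premises of Lemma~\ref{lem:bounded} (so $[g(x_k)]_+\le C_g$ and $L_p(x_k)\le L_p$ uniformly), bound the KKT residual by $\|d_k\|$ plus the $y$- and $w$-inexactness terms with $\lambda_k$ controlled via Lemma~\ref{lem:dual-bound} and Assumption~\ref{assum:reg} together with the adaptive choice of $M_k$, then average Theorem~\ref{th:theorem1}(I) for stationarity, telescope Theorem~\ref{th:theorem1}(II) for feasibility, and combine the second estimate of Lemma~\ref{lem:dual-bound} with the feasibility rate for slackness. The only differences are cosmetic (unsquared vs.\ squared triangle inequality, per-metric minimizers vs.\ the paper's single $\argmin$ of the max of the three residuals), so the approach and the resulting exponents coincide with the paper's argument.
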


\begin{proof}
Before starting the proof let us define $t\triangleq \argmin_{0\leq k\leq T-1}\max\{\|\nabla f(x_k)+\lambda_k\nabla g(x_k)\|,[g(x_k)]_+,|\lambda_kg(x_k)|\}$. Moreover, the selection of parameters $\alpha_k$, $\gamma_k$, $N_k$, and $M_k$ implies that the conditions of Lemma \ref{lem:bounded} hold, and we can invoke its result within the proof.

\textbf{Part 1.} First, we show the result for $\epsilon$-stationary condition. From the definition of $d_k$ and comparing it with $\nabla f(x_k)+\lambda_k \nabla g(x_k)$ we observe that if $\zeta(x_k,w_k)=0$ then $\lambda_k=0$ and $\|\nabla f(x_k)+\lambda_k \nabla g(x_k)\|^2\leq 2\|d_k\|^2+2(L_{xy}^\phi\|y_k-y^*(x_k)\|)^2\leq 2\|d_k\|^2+2(L_{xy}^\phi)^2\Delta_1^y\delta_y^{N_k}$ which by selecting $N_k$ as in the statement of corollary, we obtain $\|\nabla f(x_k)+\lambda_k \nabla g(x_k)\|^2\leq 2\|d_k\|^2+2(L_{xy}^\phi)^2\Delta_1^y\frac{1}{(k+1)^2}$. If $\zeta(x_k,w_k)>0$, then
\begin{align}\label{eq:dk-KKT}
&\|\nabla f(x_k)+\lambda_k \nabla g(x_k)\|^2\nonumber\\
&\leq 3\|d_k\|^2+3\|\nabla f(x_k)-\nabla_x\phi(x_k,y_k)\|^2+3\lambda_k^2\|\nabla g(x_k)-\nabla_x\psi(x_k,w_k)\|^2\nonumber\\
&\leq 3\|d_k\|^2 + 3(L_{xy}^\phi)^2 \Delta_1^y\delta_y^{N_k}+3\lambda_k^2(L_{xw}^\psi)^2 \Delta_1^w\delta_w^{M_k}\nonumber\\
&\leq 3\|d_k\|^2+3(L_{xy}^\phi)^2 \Delta_1^y \delta_y^{N_k} + 3\mu^2C^2[\psi(x_k,w_k)]_+^{2-4\theta}(L_{xw}^\psi)^2 \Delta_1^w\delta_w^{M_k}\nonumber\\
&\leq 3\|d_k\|^2+3(L_{xy}^\phi)^2 \Delta_1^y \frac{1}{(k+1)^2} + 3\mu^2C^2 (L_{xw}^\psi)^2 \Delta_1^w\frac{1}{T},
\end{align}
where in the second inequality we used Lipschitz continuity of $\nabla_x\phi$ and $\nabla_x\psi$ as well as the relations in \eqref{eq:sol-y} and \eqref{eq:sol-w}. The third inequality follows from Lemma \ref{lem:dual-bound} and Assumption \ref{assum:reg} which shows that $\lambda_k\leq C\|\nabla_x\psi(x_k,w_k)\|^{-1}\leq C\mu[\psi(x_k,w_k)]_+^{1-2\theta}$ for some $C>0$. The last inequality is obtain by plugging the selection of $N_k$ and $M_k$ as in the statement of corollary and noting that $\frac{1}{1-\delta}\geq 1/\log(1/\delta)$ for any $\delta\in(0,1)$. 

On the other hand, from Theorem \ref{th:theorem1} part (I), by selecting $\gamma=\mathcal O(1/T^{1/3})$ and $\alpha_k=\frac{T^{1/3}}{(k+2)^{1+\omega}}$ and noting that $\frac{1}{T}\sum_{k=0}^{T-1}\alpha_k=\mathcal O(1/T^{2/3})$, we conclude that $\frac{1}{T}\sum_{k=0}^{K-1}\|d_k\|^2\leq \mathcal O(1/T^{2/3})$. Therefore, combining the result with \eqref{eq:dk-KKT} we obtain
\begin{equation*}
\|\nabla f(x_t)+\lambda_t \nabla g(x_t)\|^2\leq \frac{1}{T}\sum_{k=0}^{K-1}\|\nabla f(x_k)+\lambda_k \nabla g(x_k)\|^2\leq \mathcal O\left( \frac{1}{T^{2/3}}+\frac{(L_{xw}^\psi)^2\Delta_1^w}{T}\right).
\end{equation*}
By taking the square root of both sides of the above inequality, the result of part 1 follows immediately.

\textbf{Part 2.} From Lemma \ref{lem:bounded}, we observe that there exists $D>0$ such that $D= \sum_{k=0}^{T-1}\gamma\| d_k\|^2<+\infty$. Considering the result of Theorem \ref{th:theorem1}-part (II), selecting $\gamma_k=\gamma=\mathcal O(\frac{1}{T^{1/3}})$, and $p(x)\geq 0$, we have that
\begin{align*}
\frac{1}{A_T}\sum_{k=0}^{T-1}\alpha_k[g(x_k)]_+^{2\theta}&\leq \frac{\mu}{A_T\gamma}p(x_0)+ \frac{\mu}{A_T}\sum_{k=0}^{T-1}\Big(\frac{\bar{\Delta}}{\gamma}\delta_w^{M_k}+\frac{2\alpha_k}{\mu}(\Delta_2^w)^{2\theta}\delta_w^{2\theta M_k}\Big) +\frac{\mu(L_p+2L_{xw}^{\psi})}{2A_T}D\\
&\leq \mathcal O\left(\frac{1}{A_T\gamma}+\frac{D}{A_T}\right),
\end{align*}
where the last inequality follows from plugging in $M_k$ since $\max\{\delta_w^{M_k},\delta_w^{{2\theta}M_k}\}= \mathcal O(\frac{1}{T})$. 
Therefore, from the above inequality, noting that $A_T=\Omega(T^{1/3})$, and the definition of $t$ at the beginning of the proof we conclude that $[g(x_t)]_+^{2\theta}\leq \mathcal O(\frac{1}{T^{1/3}})$ which completes the proof of part 2. 

\textbf{Part 3.} Finally, to calculate the complexity of finding $\epsilon$-complementarity slackness, recall the update of $\lambda_k$ in Algorithm \ref{alg:sp_const}. Recall that $\zeta(x_k,w_k)[\psi(x_k,w_k)]_+\|\nabla_x\psi(x_k,w_k)\|$. If $\zeta(x_k,w_k)=0$, then $\lambda_k=0$, hence, $\lambda_k g(x_k)=0$. Suppose $\zeta(x_k,w_k)>0$, then we observe that $g(x_k)\geq \psi(x_k,w_k)>0$. Therefore, from Lemma \ref{lem:dual-bound} we have that $0\leq \lambda_k g(x_k)=\lambda_k [g(x_k)]_+\leq C[g(x_k)]_+^{2-2\theta}+ C\Delta_2^w\delta_w^{M_k}[\psi(x_k,w_k)]_+^{1-2\theta}$. Combining the two scenarios, for any $k\geq 0$, we have that $|\lambda_k g(x_k)|\leq C[g(x_k)]_+^{2-2\theta}+ C\Delta_2^w\delta_w^{M_k}[\psi(x_k,w_k)]_+^{1-2\theta}$ for some $C>0$. Therefore, based on selection of $M_k$, we obtain $|\lambda_t g(x_t)|\leq \mathcal O(\frac{1}{T^{(1-\theta)/(3\theta)}}+\frac{1}{T})$ from which the result follows. 
\end{proof}
\begingroup
\raggedbottom
\subsection{Experiment Details and Additional Plots}\label{sec:additional-numeric} 
\noindent \textbf{Experiment Details:}\enspace
In all experiments, we select the regularization parameter $\lambda = 10^{-3}$ and the maximization variables $y,w$ are updated by running $N_k= 2\lceil \log(k + 2) \rceil$ and $M_k=10\lceil \log(k + 2) \rceil$ steps of the projected gradient ascent method. The stepsize $\gamma$ is tuned by selecting the best performance among $\{10^{-4},2.5\times 10^{-4}, 5\times 10^{-4}, 10^{-3}, 5\times 10^{-3}, 10^{-2}\}$ and the parameter is set $\alpha_k=\alpha/(k+2)^{1.001}$ for $\alpha\in \{0.1,0.2,0.5,1\}$. Hyperparameter choices follow Theorem \ref{th:theorem2-draft} and tuned via targeted grid search to ensure robustness. Furthermore, to determine the threshold value $r$, we solve the robust learning task in the constraint, i.e, $\min_x\max_{w\in \Delta_m} \sum_{j=1}^m\ell_2(x,\xi_j^{(2)}) -\;g_m(w)$, separately using the unconstrained variant of our method for a some iterations. The resulting objective value is then used in the original problem as the threshold value.\\
The oscillations that occur in plots reflect the difficult trade-off between minimizing the objective, enforcing feasibility under infinitely many functional constraints, and satisfying the $\epsilon-$KKT conditions, a behavior common in both convex and nonconvex problems with functional constraints \cite{hamedani2021primal}.
\begingroup
\setlength{\intextsep}{8pt}     
\setlength{\textfloatsep}{8pt}  
\setlength{\floatsep}{8pt}      
\setlength{\abovecaptionskip}{4pt}
\setlength{\belowcaptionskip}{4pt}
\begin{figure}[H]
  \centering
  \setlength{\tabcolsep}{4pt} 
  \renewcommand{\arraystretch}{1.0}

  \begin{tabular}{m{0.32\textwidth} m{0.32\textwidth} m{0.32\textwidth}}
    \includegraphics[width=\linewidth]{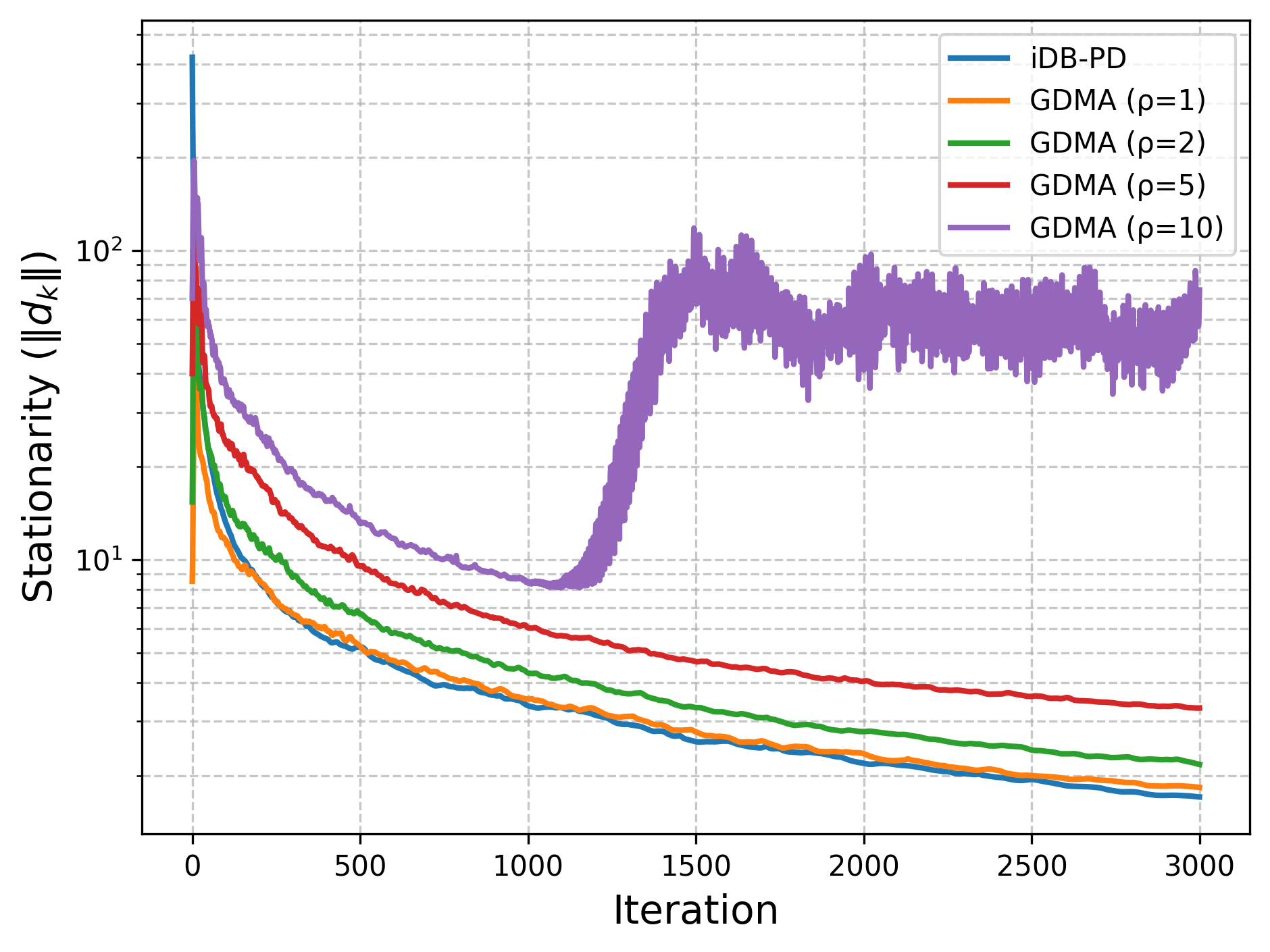} &
    \includegraphics[width=\linewidth]{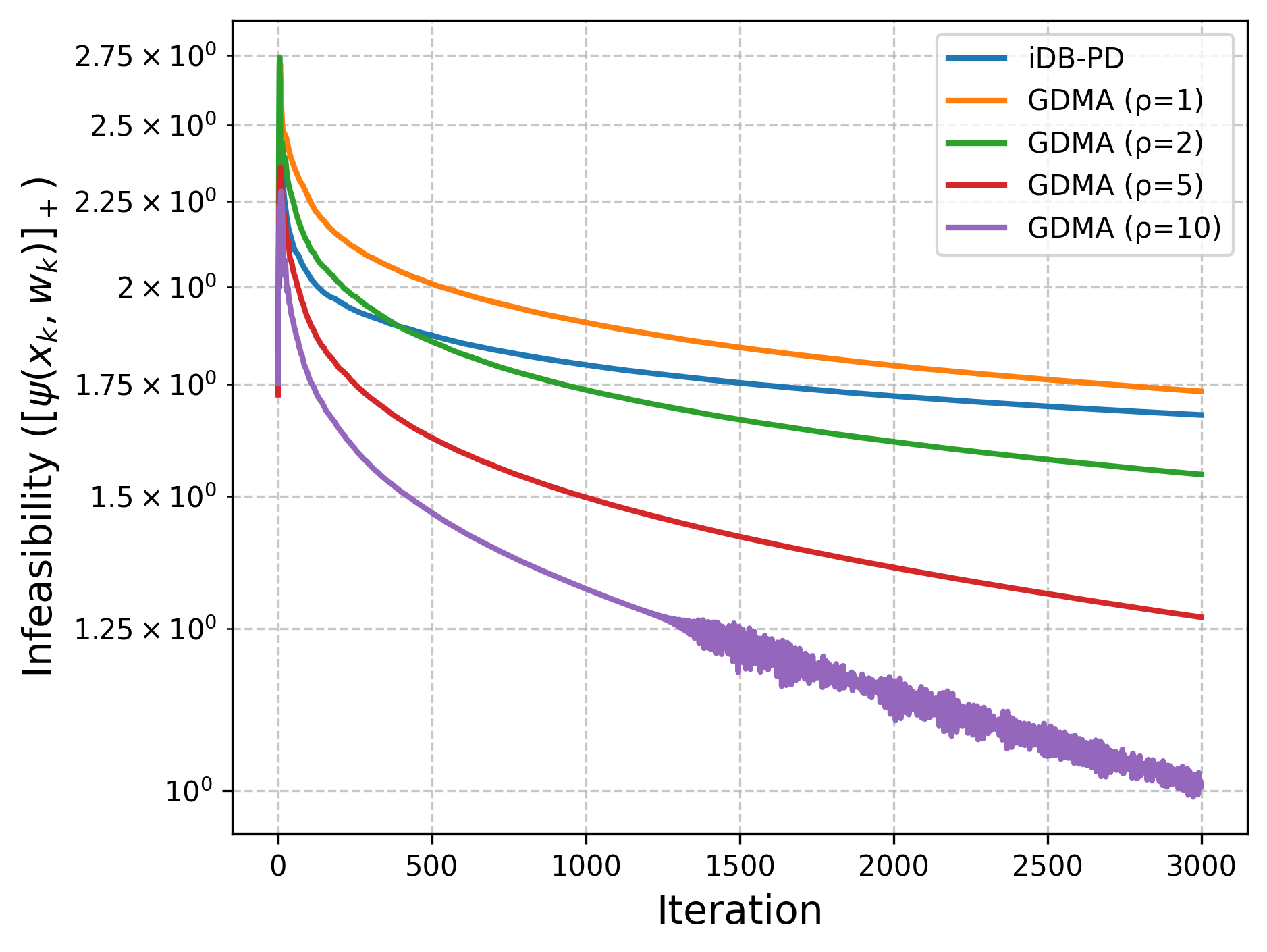} &
    \includegraphics[width=\linewidth]{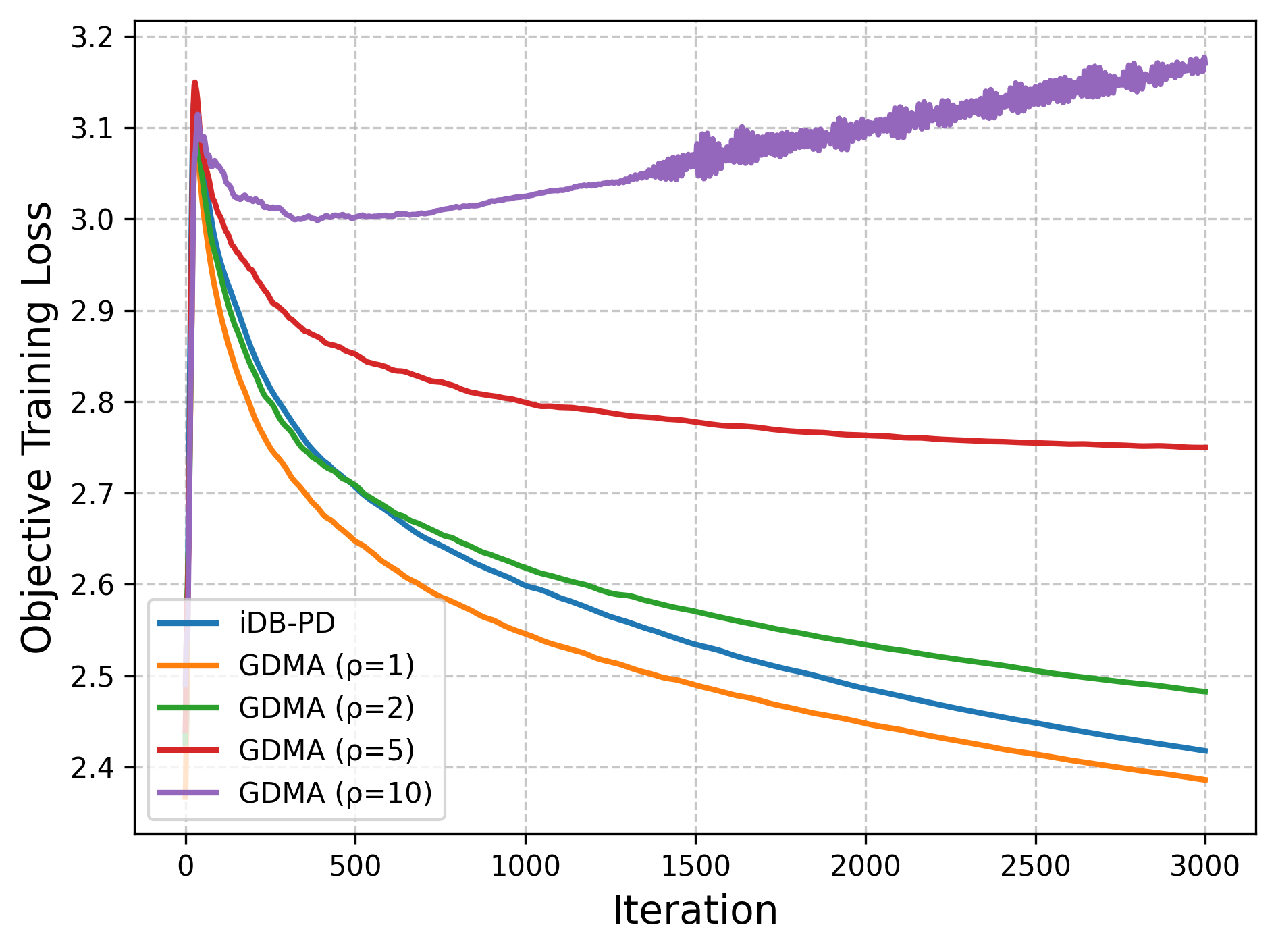} \\

    \includegraphics[width=\linewidth]{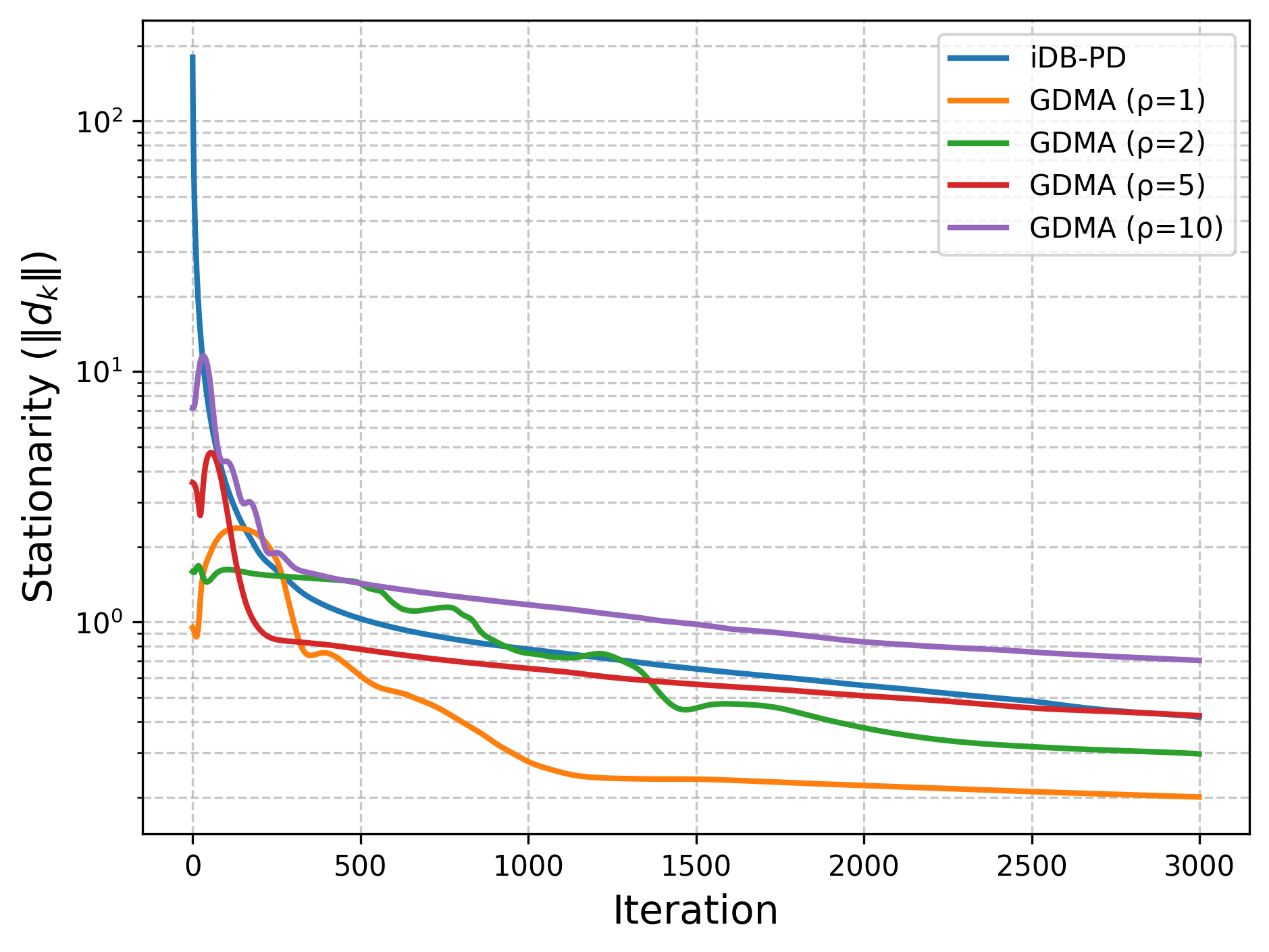} &
    \includegraphics[width=\linewidth]{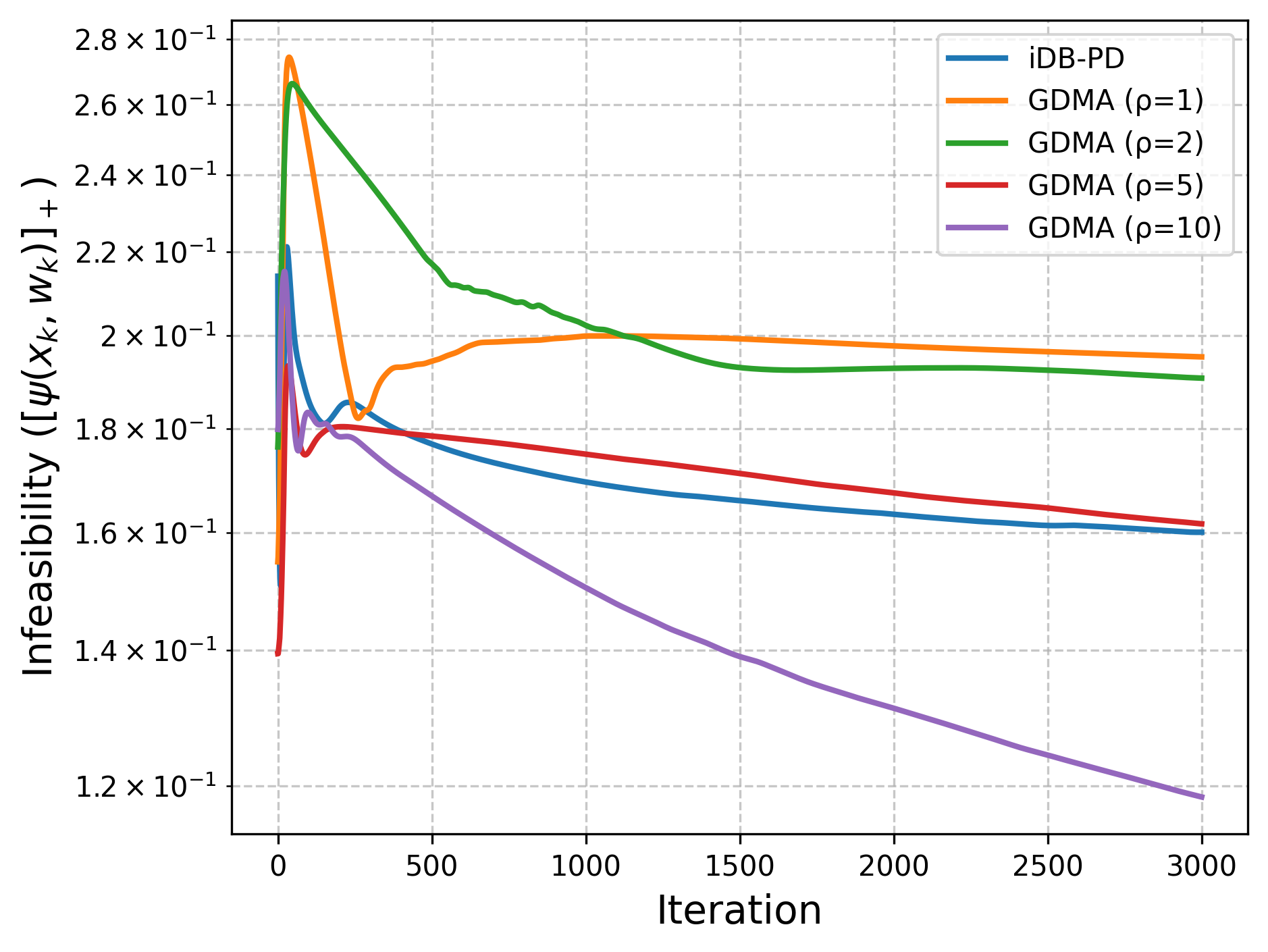} &
    \includegraphics[width=\linewidth]{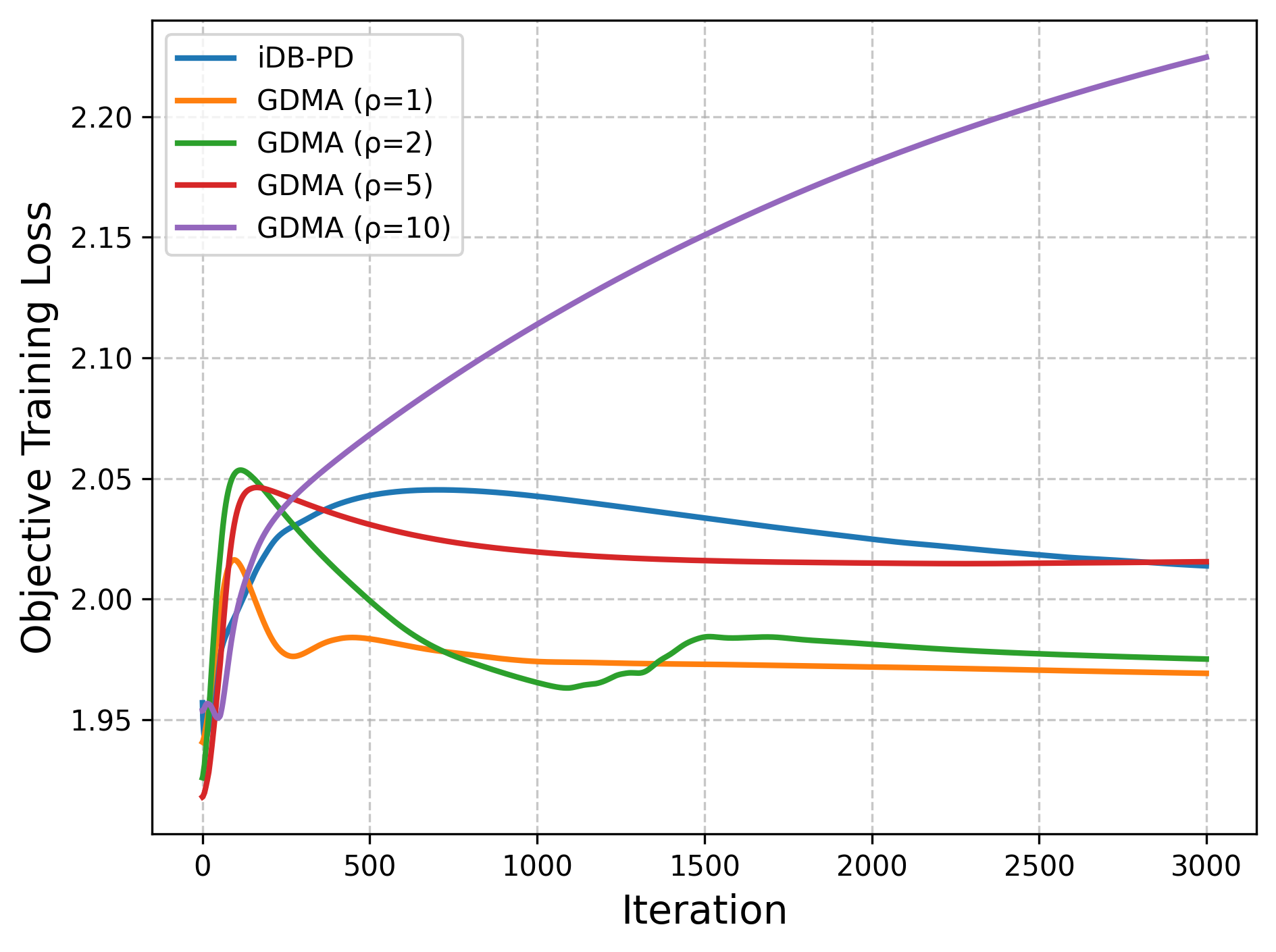} \\

    \includegraphics[width=\linewidth]{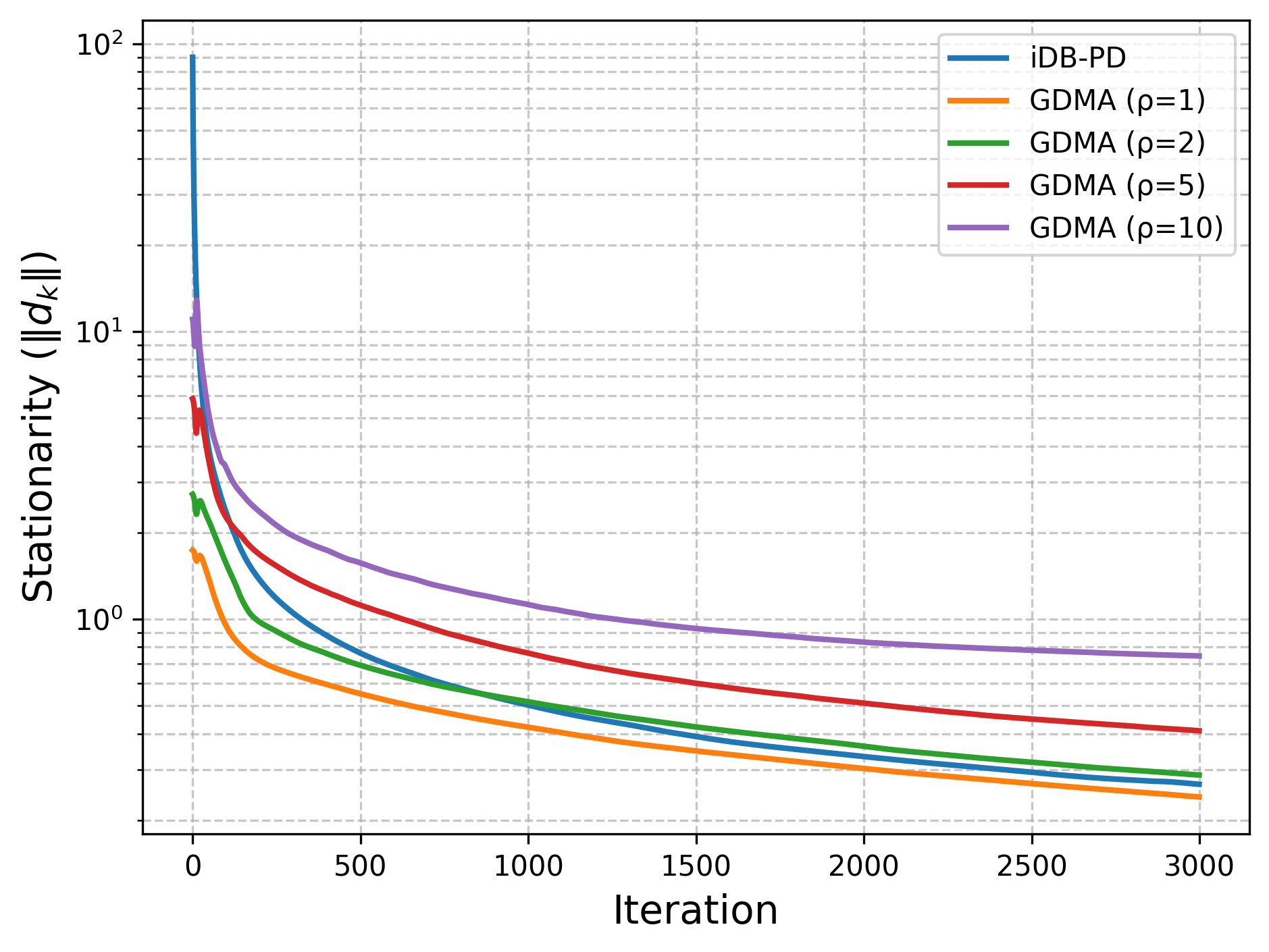} &
    \includegraphics[width=\linewidth]{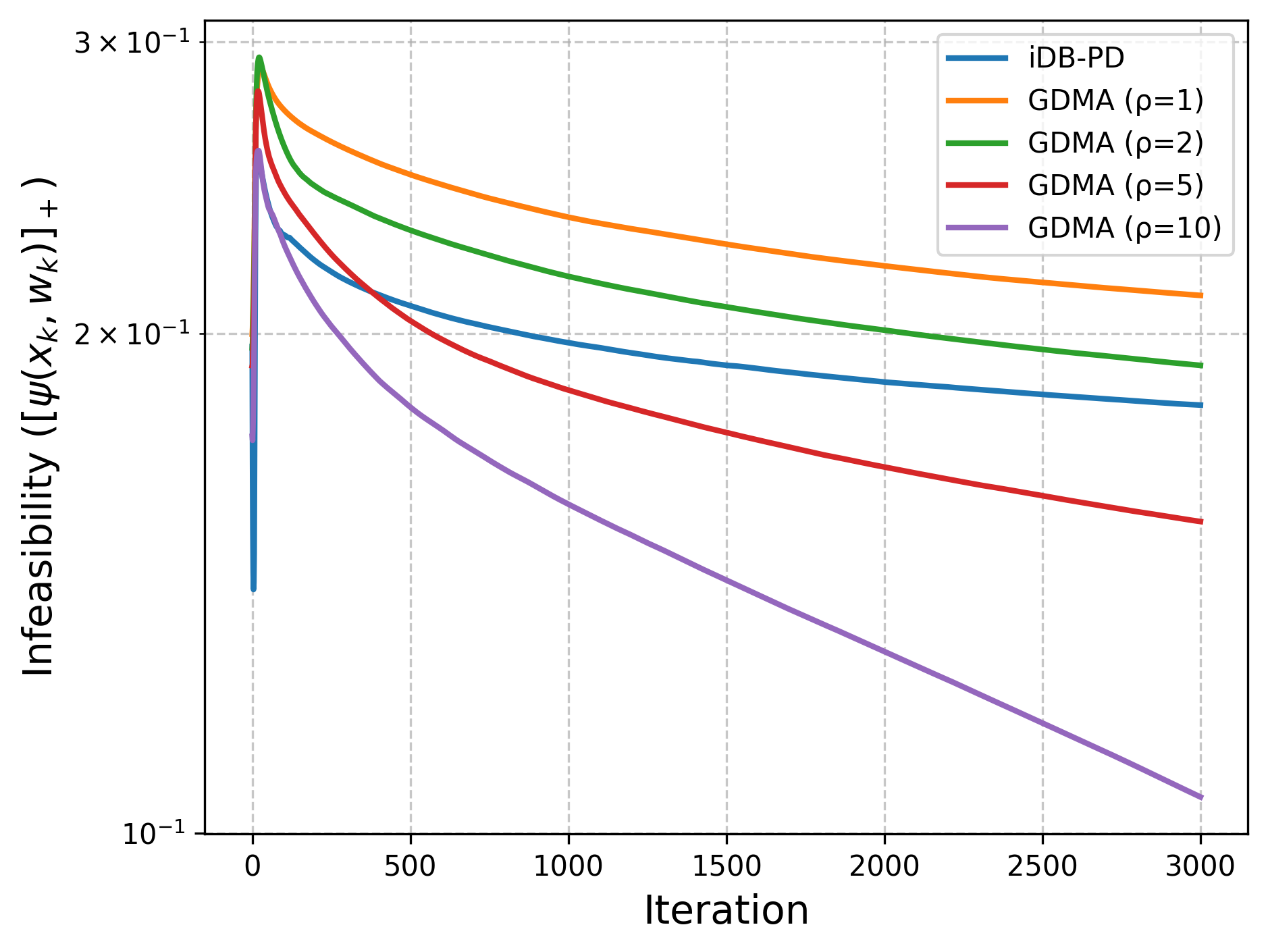} &
    \includegraphics[width=\linewidth]{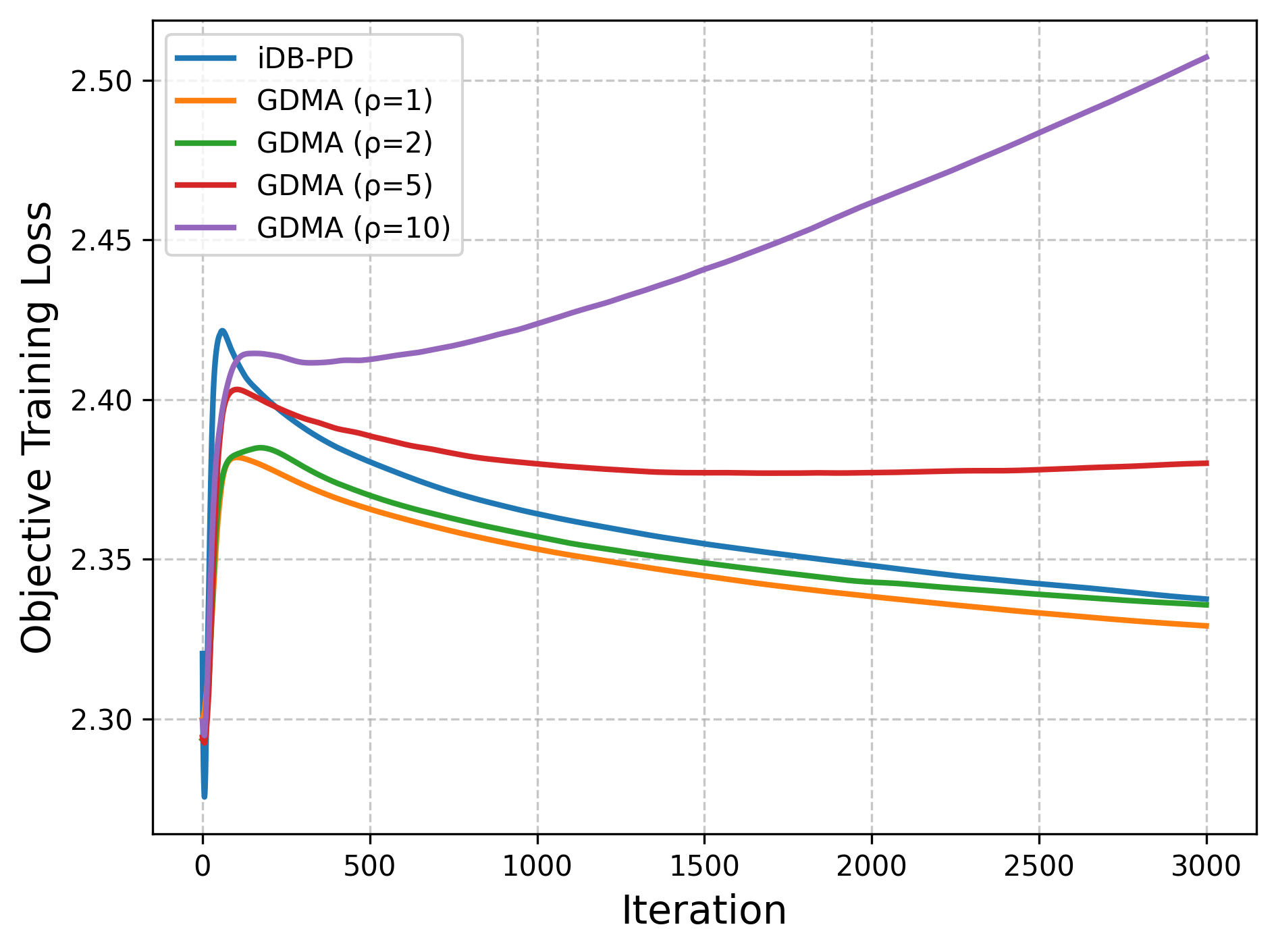} \\
  \end{tabular}

  \caption{iDB-PD vs.\ GDMA on multi-Fashion MNIST (top row), Yeast (middle row), and 20NG (bottom row), evaluated in terms of stationarity, infeasibility, and objective loss.}
  \label{fig:idbpd-gdma-mfashion-yeast-ng20}
\end{figure}

\begin{figure}[H]
  \centering
  \setlength{\tabcolsep}{4pt} 
  \renewcommand{\arraystretch}{1.0}

  \begin{tabular}{m{0.32\textwidth} m{0.32\textwidth} m{0.32\textwidth}}
    \includegraphics[width=\linewidth]{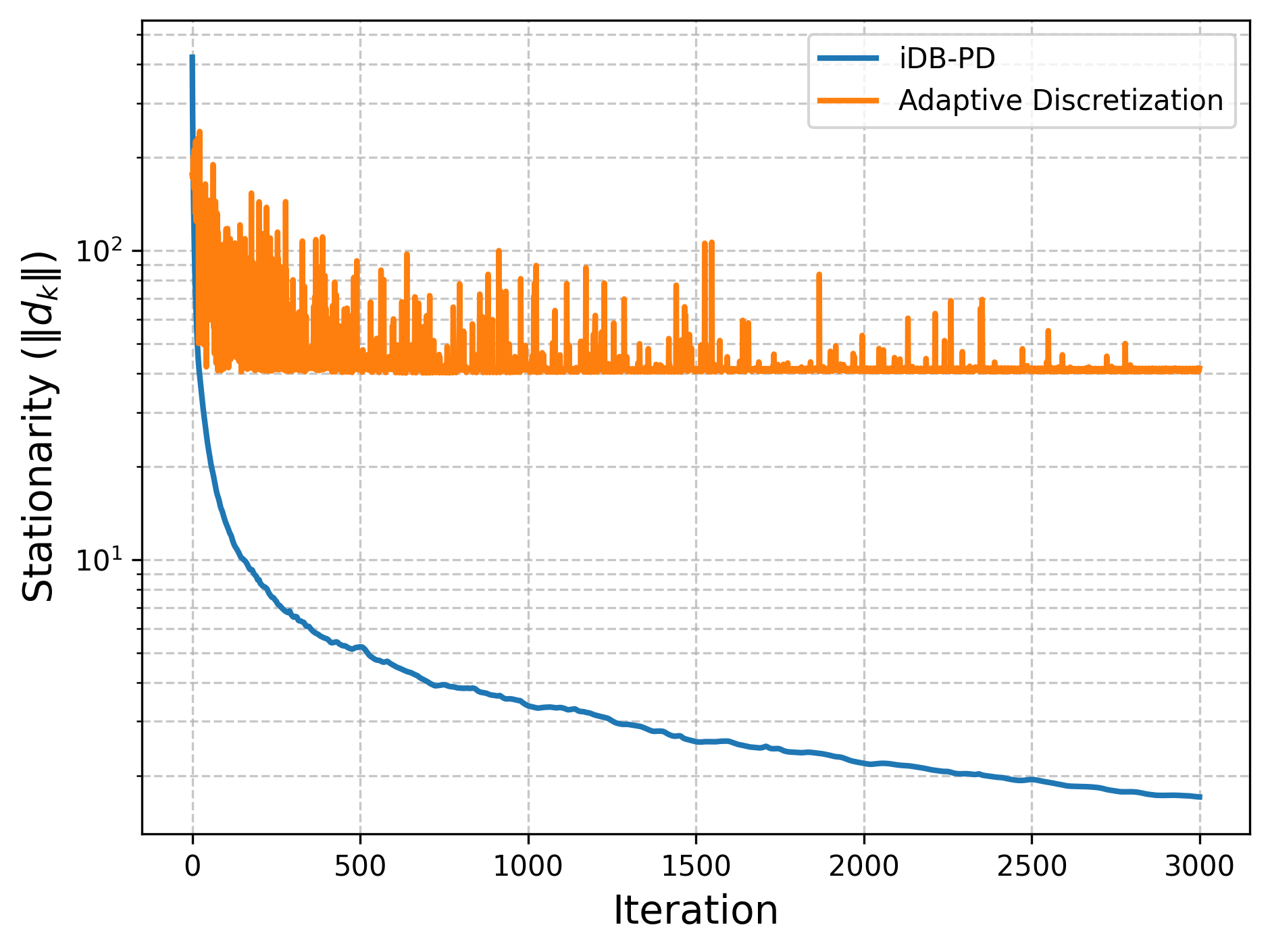} &
    \includegraphics[width=\linewidth]{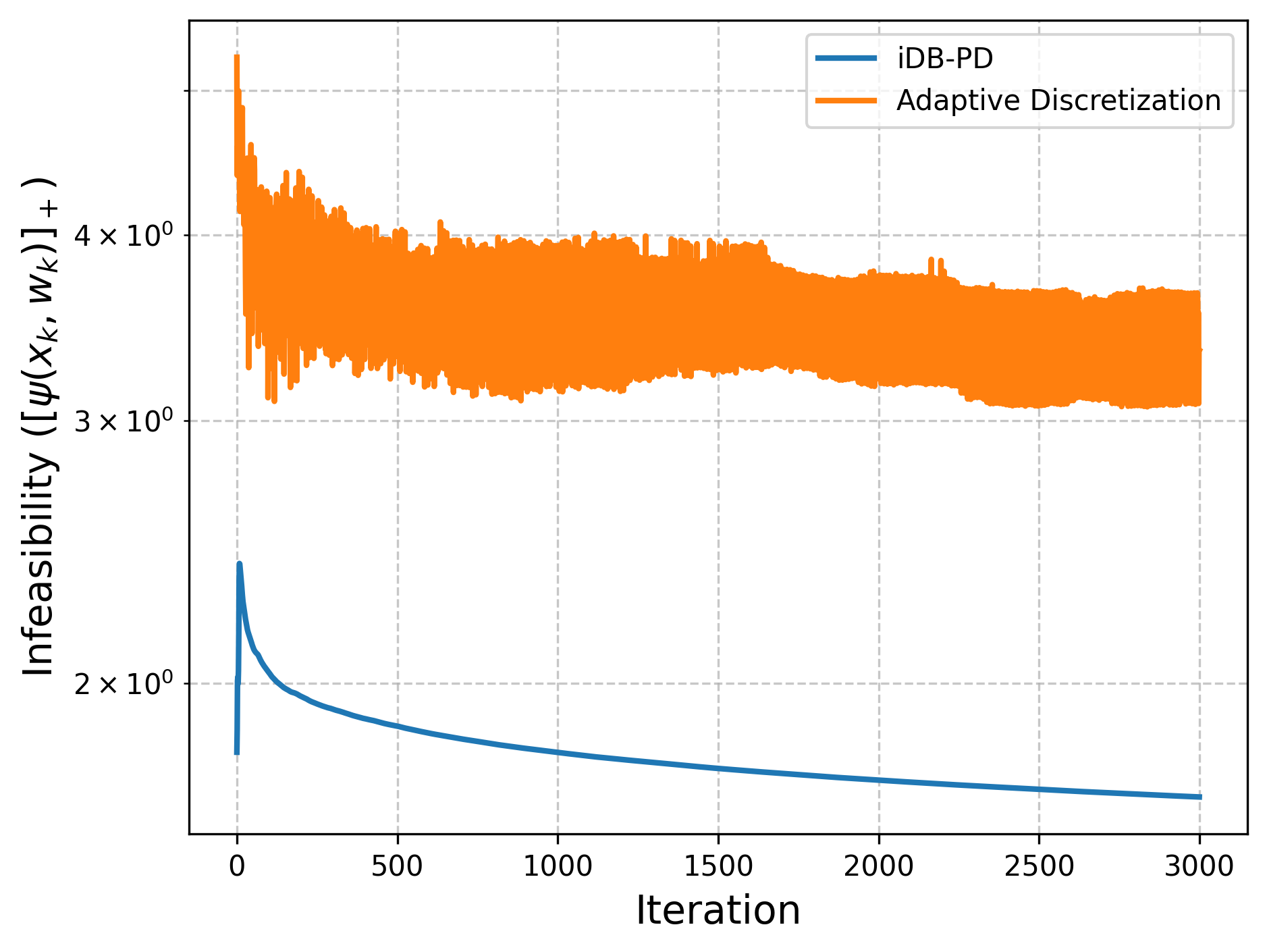} &
    \includegraphics[width=\linewidth]{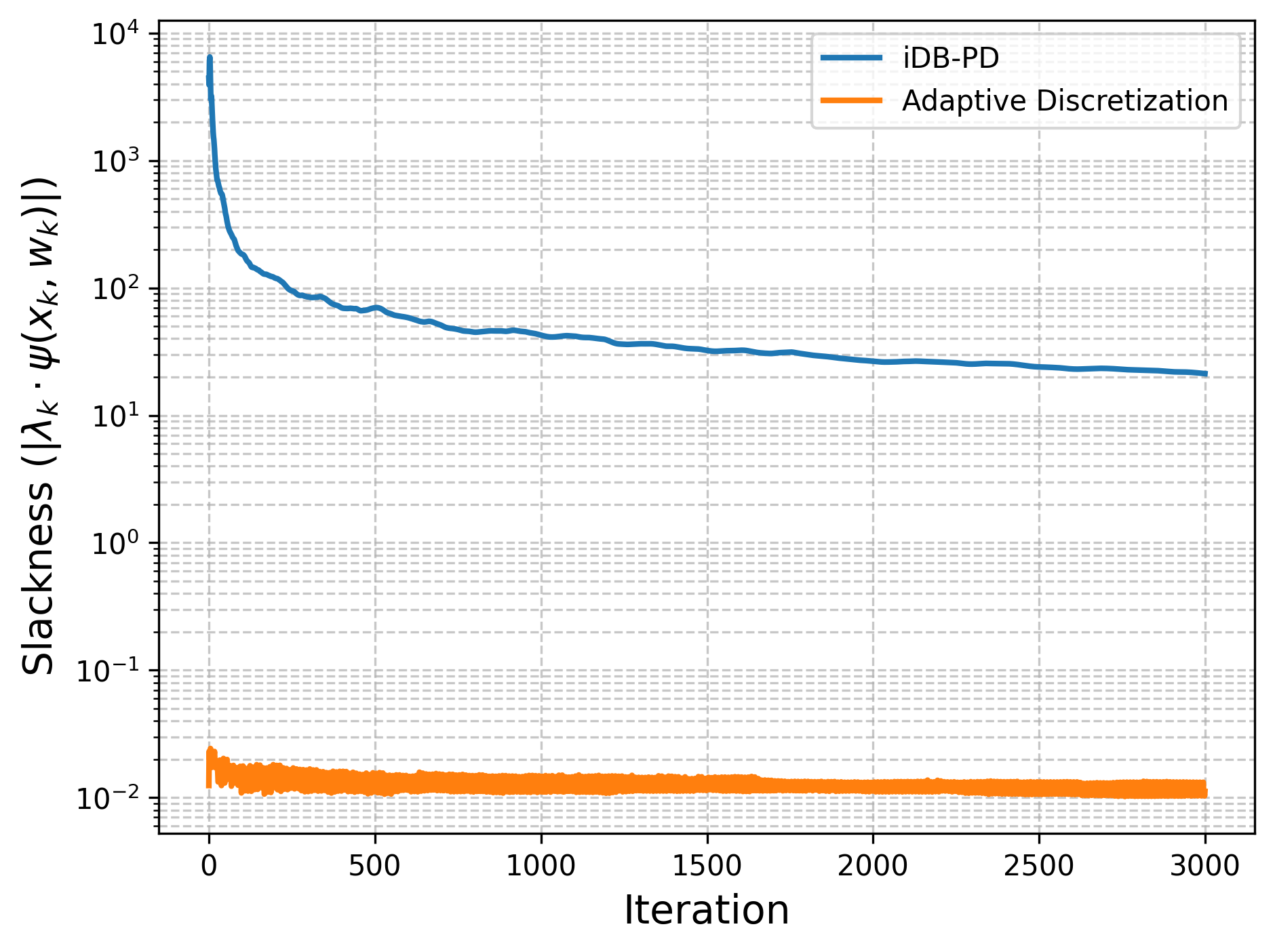} \\

    \includegraphics[width=\linewidth]{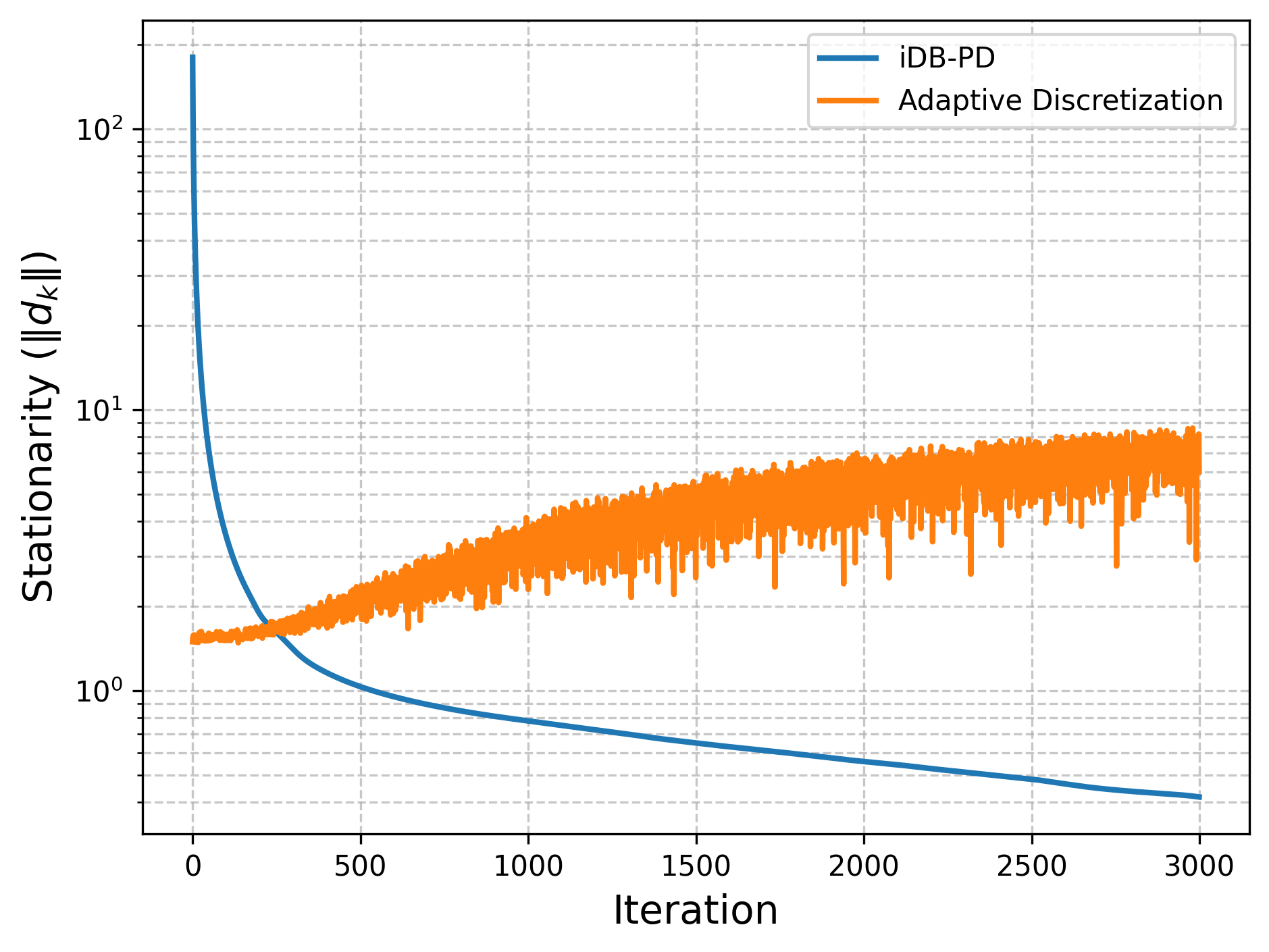} &
    \includegraphics[width=\linewidth]{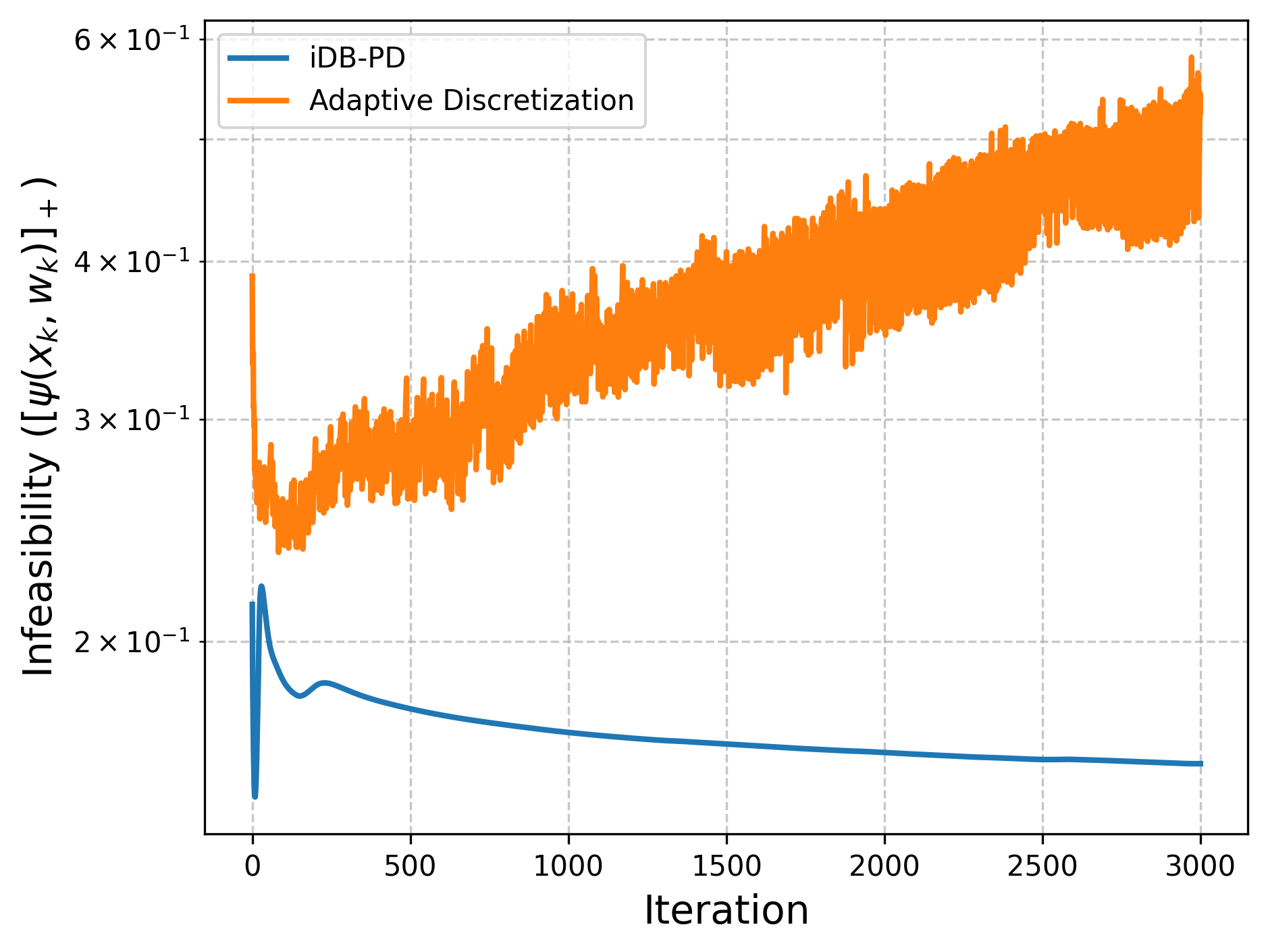} &
    \includegraphics[width=\linewidth]{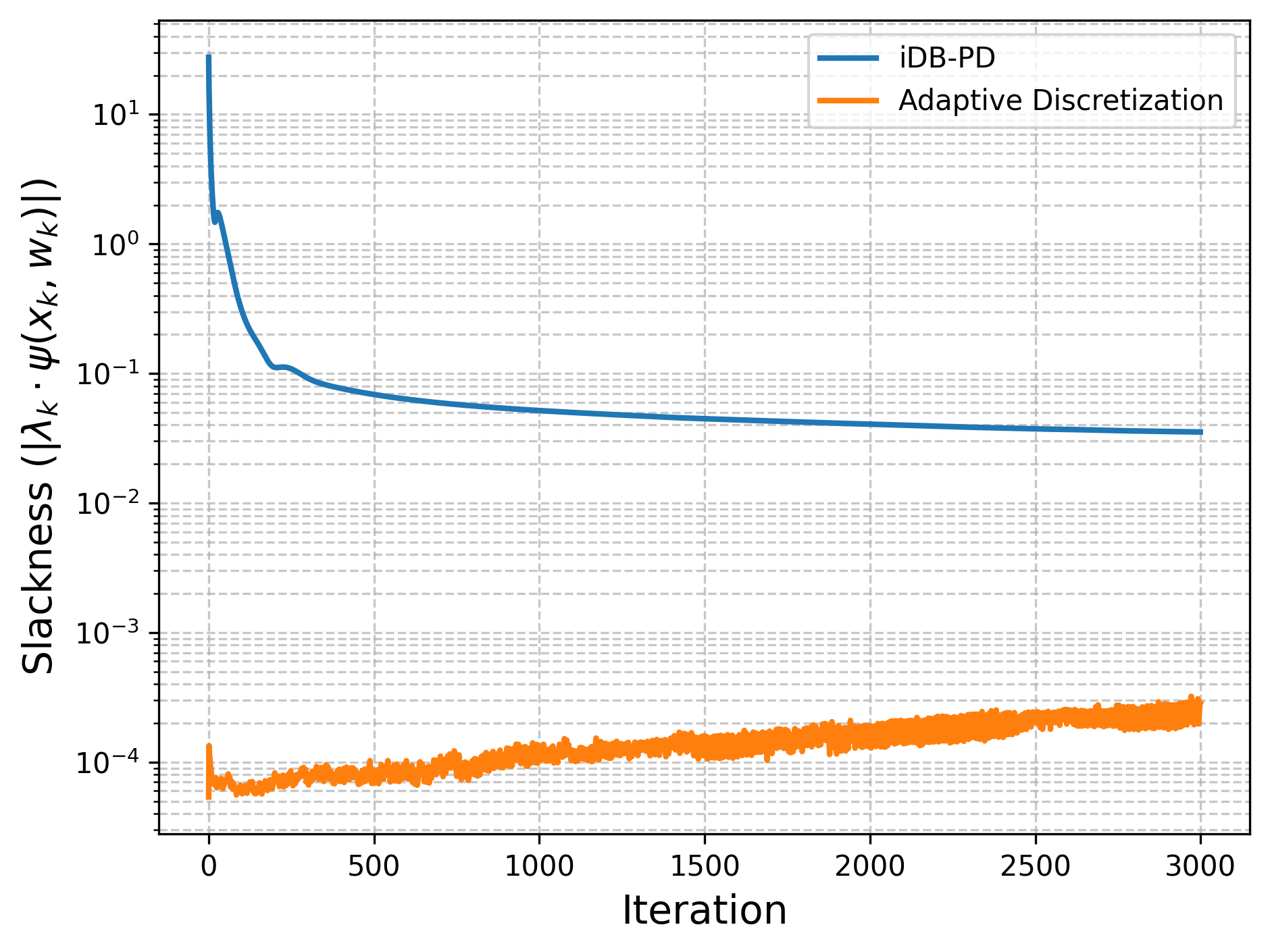} \\

      \includegraphics[width=\linewidth]{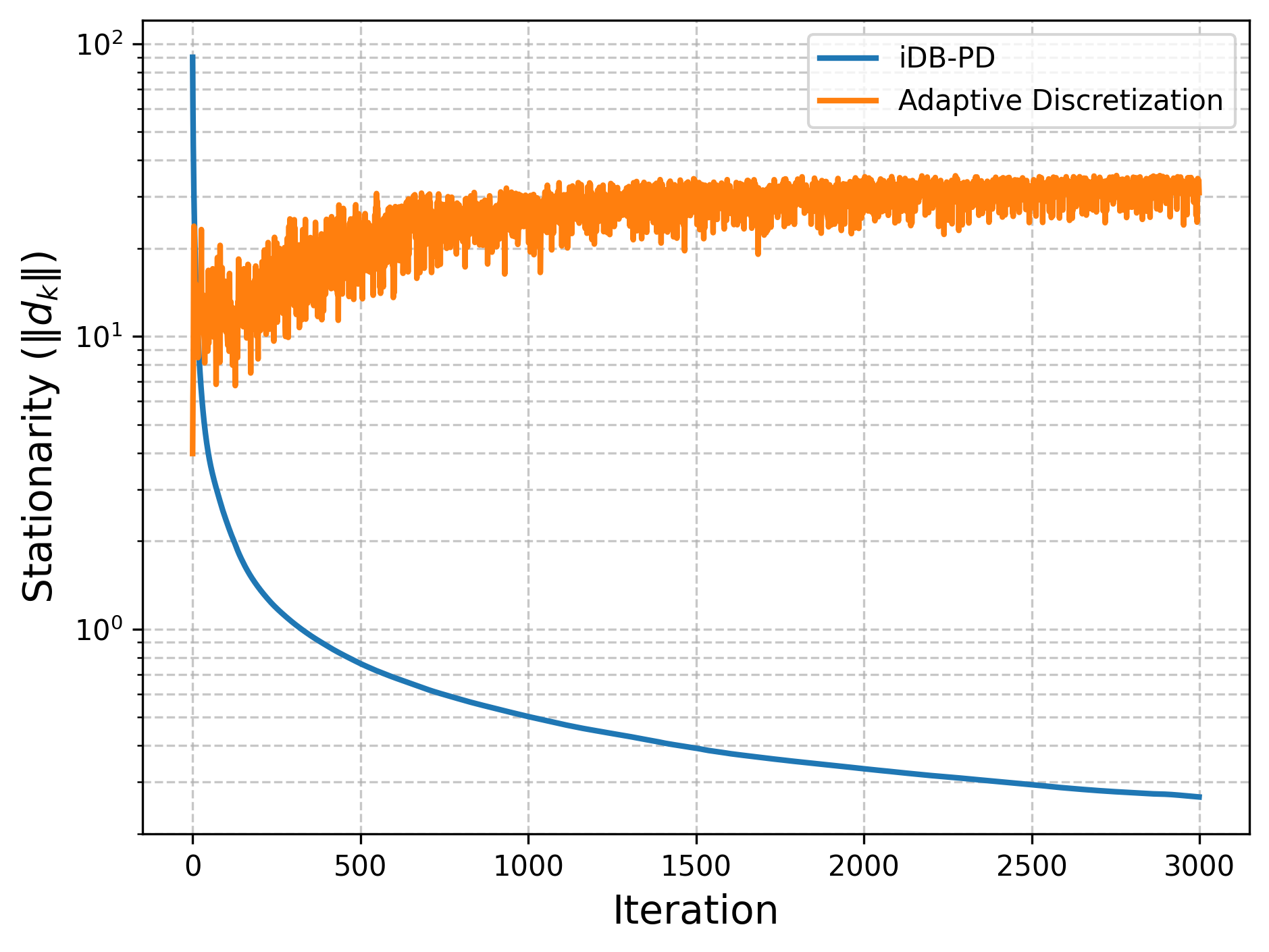} &
    \includegraphics[width=\linewidth]{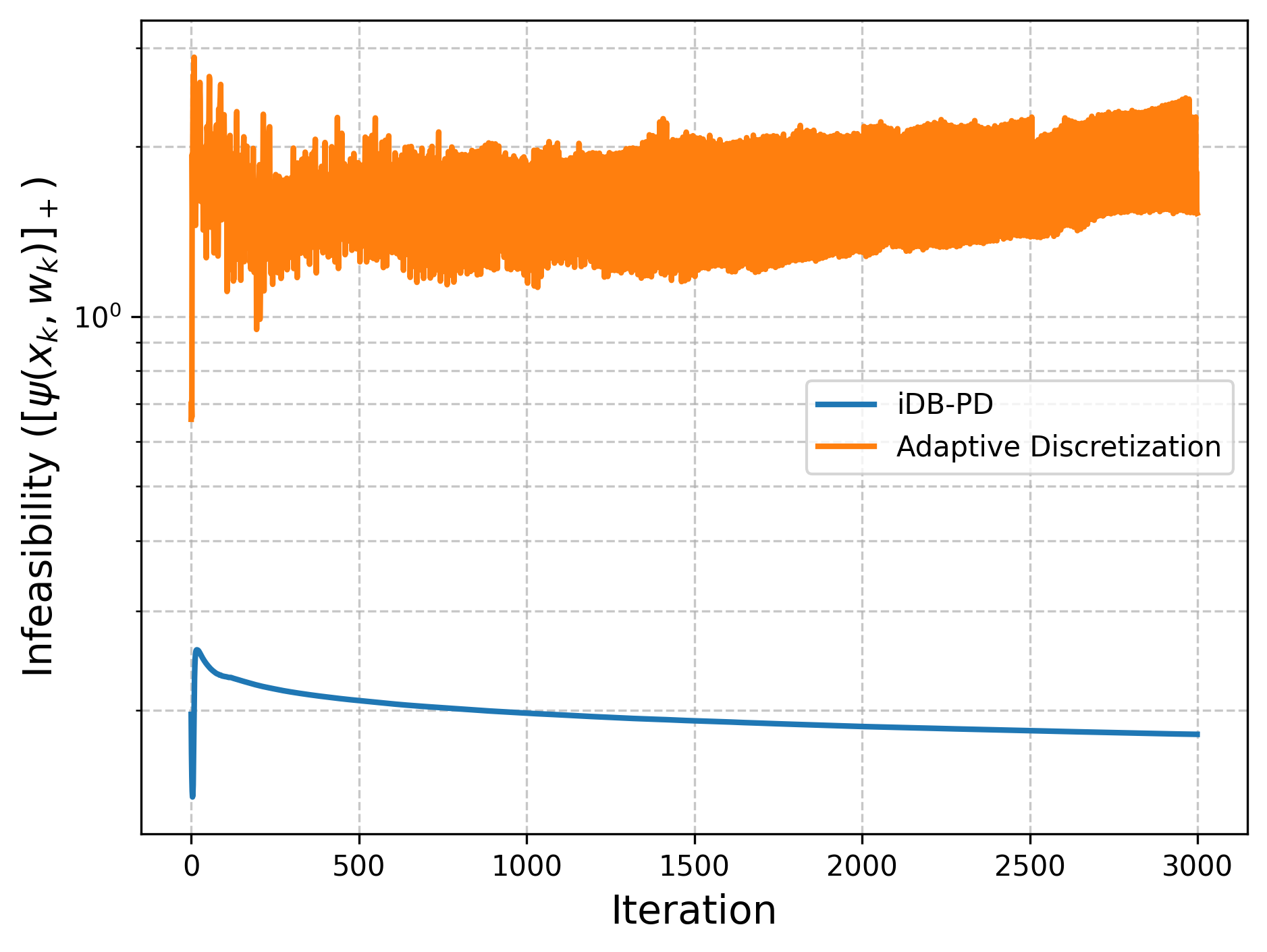} &
    \includegraphics[width=\linewidth]{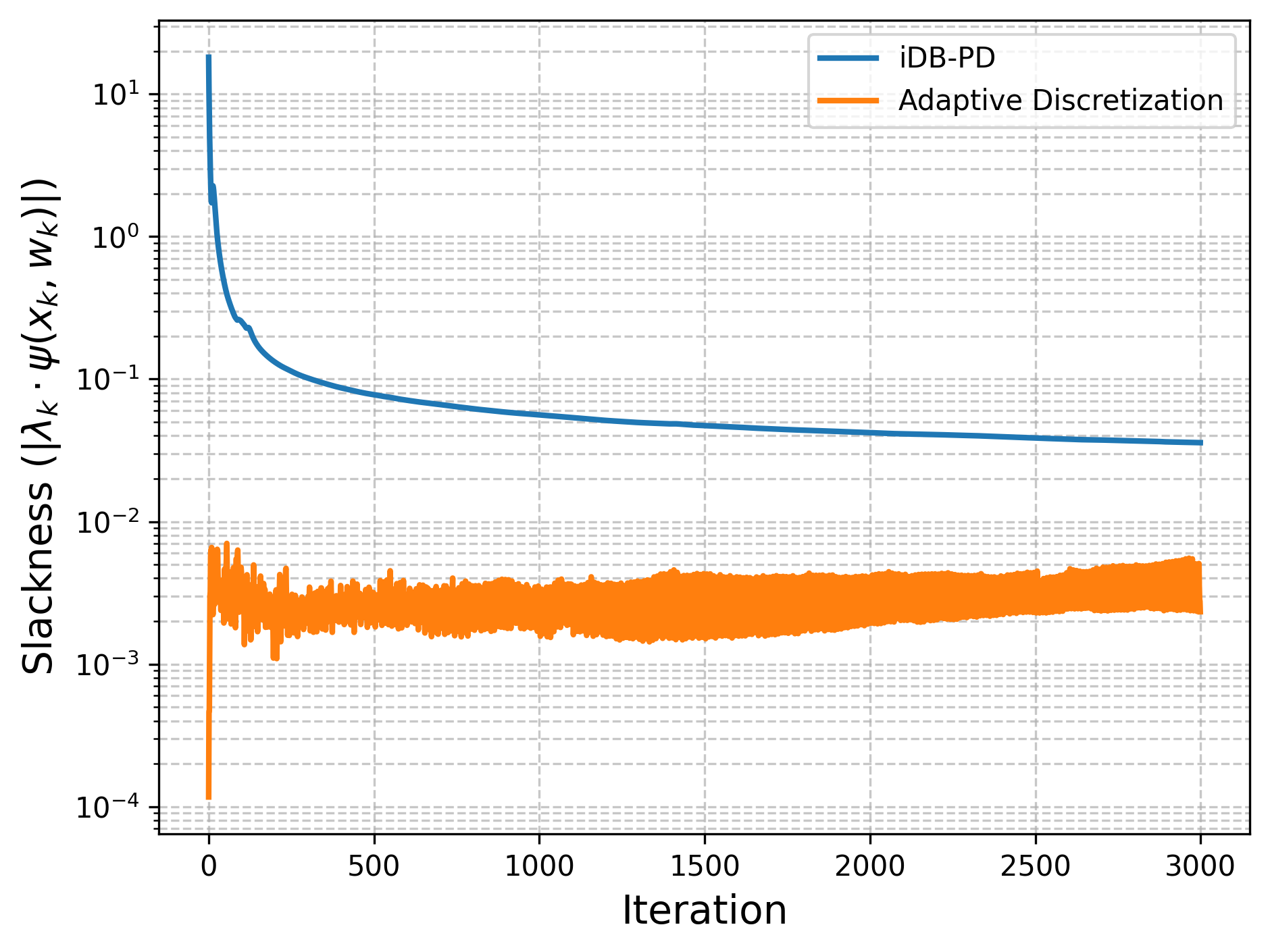} \\
  \end{tabular}

  \caption{iDB--PD vs.\ Adaptive Discretization with COOPER on multi-Fashion MNIST (top row), Yeast (middle row), and 20NG (bottom row), evaluated in terms of stationarity, infeasibility, and slackness.}
  \label{fig:idbpd-ad-mfashion-yeast-ng20}
\end{figure}

Across the three additional datasets, iDB-PD broadly outperforms all GDMA variants and the adaptive discretization method with COOPER. iDB-PD drives infeasibility and stationarity down quickly while maintaining competitive objective values. In contrast, GDMA requires large penalty values to approach feasibility, frequently at the cost of stability. Further, adaptive discretization struggles with instability and struggles with matching iDB-PD's stationarity and infeasibility performance. These results confirm the robustness of our iDB-PD method which effectively balances feasibility, optimality, and stability.

\endgroup




\end{document}